\numberwithin{equation}{section}
\newtheorem{theorem}{Theorem}[section]
\newtheorem{proposition}{Proposition}[section] 
\newtheorem{lemma}[proposition]{Lemma}
\newtheorem{remark}[proposition]{Remark}
\newcommand*{\C}{\mathbb{C}}%.............................C
\newcommand*{\R}{\mathbb{R}}%.............................R
\newcommand*{\Z}{\mathbb{Z}}%.............................Z
\newcommand{\comment}[1]{}
\title[Chains of RKHS generated by unimodular functions]%
      {Chains of reproducing kernel Hilbert spaces generated by unimodular functions} 
\author[M. Suzuki]{Masatoshi Suzuki}
\subjclass[2010]{34A55, 31A10, 34L40, 47B35}
\keywords{
de Branges spaces; 
inverse problem; 
structure Hamiltonians; 
reproducing kernel Hilbert spaces; 
unimodular functions}
\begin{abstract}
We present a method to construct a chain of reproducing kernel Hilbert spaces 
controlled by a first-order system of differential equations 
from a given unimodular function satisfying several conditions. 
One of the applications of that method is a conditional but richly general solution 
to the inverse problem of recovering the structure Hamiltonian from a given de Branges space.
\end{abstract}
\begin{document}
%..........................................

%%%%%%%%%%%%%%%%%%%%%%%%%%%%%%%%%%%%%%%%%%%%%%%%%%%%%%%%%%%%%%%%%%%%%%%%%%%%%%%%%%%%%%%%
%
\section{Introduction} 
%
%%%%%%%%%%%%%%%%%%%%%%%%%%%%%%%%%%%%%%%%%%%%%%%%%%%%%%%%%%%%%%%%%%%%%%%%%%%%%%%%%%%%%%%%

A first-order system of differential equations called a canonical system 
defined by a positive-semidefinite $2 \times 2$ symmetric matrix-valued function $H(t)$ 
gives rise to an entire function $E$ in the Hermite--Biehler class,  
which is a generalization of the exponential functions, see below for details.   
The inverse problem of recovering $H(t)$ from a given function in the Hermite--Biehler class 
is difficult in general, but has been the subject of many studies because of 
its significance and wide applications. 
In that context, naturally, 
the construction of $H(t)$ is discussed on the assumption that 
$E$ belongs to the Hermite--Biehler class. 
However, sometimes we need a way to construct $H(t)$ 
that does not require such an assumption. 

One important example is the entire function $E_\xi(z):=\xi(1/2-iz)+\xi'(1/2-iz)$, 
where $\xi(s):=2^{-1}s(s-1)\pi^{-s/2}\Gamma(s/2)\zeta(s)$ 
for the Riemann zeta-function $\zeta(s)$ 
and the gamma-function $\Gamma(s)$.  
The entire function $E_\xi$ belongs to the Hermite--Biehler class 
if and only if the Riemann hypothesis is true. 
Therefore, if there is a method to construct 
$H(t)$ corresponding to $E_\xi$ 
unconditionally to the Riemann hypothesis, 
it can be applied to the study of the Riemann hypothesis. 
Such a strategy was realized in \cite{Su19_1, Su19_2}, 
resulting in a necessary and sufficient condition 
for the Riemann hypothesis formulated in terms of canonical systems. 

However, the method of \cite{Su19_1} 
is applicable only when the corresponding $H(t)$ is diagonal. 
Thus, for example, it cannot be applied to a Dirichlet $L$-function 
of a non-real Dirichlet character.  
The first purpose of this paper is to solve that problem and 
make it applicable to non-diagonal $H(t)$. 
The second purpose is to extend the range of applications of the theory 
by axiomatically rearranging the method of \cite{Su18, Su19_1, Su19_3}, 
which assumed conditions for concrete integral kernels. 
This makes it possible, for example, to handle 
the examples given in Section \ref{section_3} 
in a unified manner.
For these two purposes, 
we discuss associating a unimodular function 
with a chain of reproducing kernel Hilbert spaces. 
We explain a more specialized and technical outline in the following.
\medskip

A typical source of chains of reproducing kernel Hilbert spaces 
is an entire function $E$ of the Hermite--Biehler class 
$\overline{\mathbb{HB}}$ which consists of 
all entire functions satisfying 
\[
|E^\sharp(z)|<|E(z)|
\] 
in the upper half-plane $\C_+=\{z~|~\Im(z)>0\}$, 
where $F^\sharp(z)=\overline{F(\bar{z})}$. 
We denote by $\mathbb{HB}$ the subspace of $\overline{\mathbb{HB}}$ 
consisting of functions that have no zeros on $\R$. 
First $E$ defines the de Branges space $\mathcal{H}(E)$,  
which is a reproducing kernel Hilbert space consisting of entire functions. 
It is well-known that the set of all de Branges subspaces $\mathcal{H}(E_t)$ 
of $\mathcal{H}(E)$ is totally ordered by set-theoretical inclusion 
and the generators $E_t \in \overline{\mathbb{HB}}$ are controlled by a canonical system, 
which is a system of differential equations of the form 
\begin{equation} \label{c_101}
-\frac{\partial}{\partial t}
\begin{bmatrix} A(t,z) \\ B(t,z) \end{bmatrix}
= 
z 
\begin{bmatrix} 0 & -1 \\ 1 & 0 \end{bmatrix}
H(t)
\begin{bmatrix} A(t,z) \\ B(t,z) \end{bmatrix}
\end{equation}
on an interval $t \in I \subset \R$ parametrized by $z \in \C$, 
where $H(t)$ is a positive-semidefinite $2 \times 2$ symmetric matrix 
for almost all $t \in I$, 
$A(t,z)=(E_t(z)+E_t^\sharp(z))/2$, and $-iB(t,z)=(E_t(z)-E_t^\sharp(z))/2$; 
see Woracek \cite{Wo} for example. 
The matrix-valued function $H$ corresponding to $E$ as above 
is called the {\it structure Hamiltonian} 
of the de Branges space $\mathcal{H}(E)$, 
which is unique up to a reparameterization of $t$ and the normalization $E(0)=1$.  

The inverse problem to recover a structure Hamiltonian from $E$ 
was studied by many authors after the work of de Branges 
(cf. \cite[Section 1]{Su19_1}), 
and recently, a complete characterization 
of structure Hamiltonians of de Branges space 
was obtained by Romanov--Woracek \cite{RomWor19}. 
However, each known method of constructing $H$ 
has its advantages and disadvantages, 
depending on its applications. 
In particular, 
as already mentioned above, 
the method of \cite{Su19_1} 
can be applied only to diagonal Hamiltonians 
that are often referred to as Kre\u{\i}n's strings 
by the correspondence explained after Theorem \ref{thm_2_6}. 
Therefore, it can be applied only 
to the study of so-called self-dual zeta-functions. 

In this paper, the above disadvantage of \cite{Su19_1} is removed 
within a rather broad framework of constructing 
a chain of reproducing kernel Hilbert spaces from 
a function $E$ 
which is not necessarily an entire function. 
Such an extension of the method of constructing $H$ 
would be interesting in its own right. 
Moreover, Hamiltonians that cannot be obtained as structure Hamiltonians 
of de Branges spaces can be systematically obtained from our method. 
For example, let $M(z)$ be a meromorphic function on $\C$ 
having no zeros in $\C_+ \cup \R$, and let us define the spaces
\[
\mathcal{J}_t(M):= e^{izt}M(z)H^2(\C_+) \cap (e^{-izt}M^\sharp(z) H^2(\C_-))
\]
for real numbers $t$, where $\C_-=\{z~|~\Im(z)<0\}$ 
is the lower half-plane 
and $H^2(\C_\pm)$ are Hardy spaces on $\C_\pm$, respectively.   
Then each $\mathcal{J}_t(M)$ is a reproducing kernel Hilbert space 
consisting of meromorphic functions on $\C$. 
If $M$ is an entire function $E \in \overline{\mathbb{HB}}$, 
we have $\mathcal{J}_t(M)=\mathcal{H}(E)$ for $t=0$. 
More generally, $\mathcal{J}_t(M)$ for $t=0$ 
is isomorphic to the model space 
$H^2(\C_+) \ominus (M^\sharp/M)H^2(\C_+)$ 
if $M^\sharp/M$ is an inner function in $\C_+$. 
The model space is isomorphic to a de Branges space 
if $M^\sharp/M$ is a meromorphic inner function in $\C_+$. 
The theory of de Branges spaces and 
model spaces is studied actively by numerous researchers by its importance in connection with various topics of complex and harmonic analysis 
(cf. Garcia--Mashreghi--Ross \cite{MR3526203}, 
and also Chalendar--Fricain--Timotin \cite{MR3589670}, 
Havin--Mashreghi \cite{MR2016246}). 
As detailed in Section \ref{section_2} below, 
it can be seen that the reproducing kernel of $\mathcal{J}_t(M)$ has the form 
\[
J(t;z,w) 
= 
\frac{ 
\overline{A(t,z)}B(t,w)  
- A(t,w)\overline{B(t,z)}
}{\pi(w-\bar{z})} 
\]
under appropriate conditions for $M$. 
Through this formula of the reproducing kernel, we see that 
the reproducing kernel Hilbert space $\mathcal{J}_t(M)$ 
is generally different from de Branges spaces $\mathcal{H}(E)$ 
generated by $E \in \overline{\mathbb{HB}}$ 
and de Branges--Rovnyak spaces $\mathcal{H}(b)$ 
generated by $b \in L^\infty(\R)$. 
On the other hand, the above spaces ordered by inclusion 
$\mathcal{J}_{t}(M) \supset \mathcal{J}_{s}(M)$ 
for $t \leq s$, therefore there exists $t_0 \leq + \infty$ 
such that 
$\mathcal{J}_t(M)\not=\{0\}$ for every $t < t_0$ 
if $\mathcal{J}_t(M)\not=\{0\}$ for some $t < \infty$. 
The chain of spaces $\mathcal{J}_t(M)$, $t<t_0$,  
is controlled by a system of differential equations 
in the sense that there exists a $2 \times 2$ symmetric matrix-valued 
function $H_M(t)$ such that 
the functions $A(t,z)$ and $B(t,z)$ 
in the reproducing kernel $J(t;z,w)$ above satisfy the system \eqref{c_101} 
with $H(t)=H_M(t)$ for $t <t_0 $ and $z \in \C_+$. 
Moreover, we find that $\lim_{t \to t_0}J(t;z,w)=0$ 
if assuming some additional conditions for $M$. 
If $M$ is an entire function belonging to $\overline{\mathbb{HB}}$, 
$\mathcal{J}_{t}(M)=\mathcal{H}(E_t)$ for $0 \leq t <t_0$, 
and $H_M(t)$ is a structure Hamiltonian of $\mathcal{H}(M)$, 
but $H_M(t)$ is generally not a structure Hamiltonian of a de Branges space, 
because $\mathcal{J}_{t}(M)$ is not a de Branges space in general. 

Briefly stated, the method detailed in the next section 
is to define an abstract conjugation on $L^2(\R)$ from a unimodular function on $\R$ 
such that it defines a family of reproducing kernel Hilbert spaces 
as a natural family of conjugation invariant subspaces of $L^2(\R)$. 
The above spaces $\mathcal{J}_{t}(M)$ 
are obtained as Fourier transforms of such invariant spaces.
\medskip

The paper is organized as follows. 
In Section \ref{section_2}, we describe the precise settings and state the main results 
Theorems \ref{thm_2_1}--\ref{thm_2_6}. 
Furthermore, we explain the relationship with Kre\u{\i}n's inverse spectral theory for strings. 
In Section \ref{section_3}, 
we present some non-trivial examples of unimodular functions 
that satisfy all assumptions in the main theorems.
In Section \ref{section_4}, we prove Theorem \ref{thm_2_1}. 
In Section \ref{section_5}, we prove Theorems \ref{thm_2_2}, \ref{thm_2_3}, 
and \ref{thm_2_4}. The most essential new compared to the previous works 
\cite{Su19_1, Su19_3} is the proof of Theorem \ref{thm_2_2}.
In Section \ref{section_6}, we prove Theorem \ref{thm_2_5}. 
Then Theorem \ref{thm_2_6} follows as a corollary.
In Section \ref{section_7}, we describe sufficient conditions for the sixth and the seventh 
of the eight assumptions in Section 2 as a complement to the main results. 

\bigskip

\noindent
{\bf Acknowledgments}~
This work was supported by JSPS KAKENHI Grant Number JP17K05163 and JP23K03050.  
This work was also supported by the Research Institute for Mathematical Sciences, an International Joint Usage/Research Center located in Kyoto University.

%%%%%%%%%%%%%%%%%%%%%%%%%%%%%%%%%%%%%%%%%%%%%%%%%%%%%%%%%%%%%%%%%%%%%%%%%%%%%%%%%%%%%%%%
%
\section{Results} \label{section_2}
%
%%%%%%%%%%%%%%%%%%%%%%%%%%%%%%%%%%%%%%%%%%%%%%%%%%%%%%%%%%%%%%%%%%%%%%%%%%%%%%%%%%%%%%%%

In this and subsequent sections, $u$ represents a unimodular function in $L_{\rm loc}^1(\R)$, 
that is, $u$ is a locally integrable function on $\R$ satisfying 
$|u(z)|=1$ for almost every $z \in \R$. 
For technical reasons, we introduce the following conditions 
for unimodular functions $u$ 
and  denote by $U_{\rm loc}^1(\R)$ the set of all $u$ satisfying them: 
\begin{enumerate}
\item[(U1)] the value $u(0)$ is defined, $u(0)\not=0$, and $u$ is H\"{o}lder continuous at $z=0$ 
with exponent $1/2 < \alpha \leq 1$: $|u(z)-u(0)| \ll |z|^\alpha$ as $|z| \to 0$; 
\item[(U2)] there exists a domain $D$ of $\C$, which contains $\R$ 
and is closed under complex conjugation,  
and a meromorphic function $U$ on $D \setminus \R$ 
such that $u(z)$ is the non-tangential limit of $U$ 
at $z$ approaching from both half-planes $\C_+$ and $\C_-$ 
for almost all $z \in \R$. 
Then we often identify $u$ with $U$. 
\end{enumerate}

Condition (U1) means that $u$ is  equal to such a function almost everywhere. 
The reason why the domain $D$ in (U2)  is assumed 
to be symmetric for the real line 
is that if $U$ is a meromorphic function on a domain $D \subset \C_+$, 
then it extends to $\overline{D}\subset \C_-$ by 
$U(z)=1/U^\sharp(z)\,(=1/\overline{U(\bar{z})})$. 
We say that a unimodular function $u \in U_{\rm loc}^1(\R)$ is {\it symmetric} 
if  
\begin{equation} \label{c_201}
u^\sharp(z)=u(-z) \quad \text{for} \quad z \in D. 
\end{equation}
Unimodular functions of the form $u=M^\sharp/M$ 
with a meromorphic function $M$ satisfying 
$M^\sharp(z)=M(-z)$ or $M^\sharp(z)=-M(-z)$ 
are typical examples of symmetric ones.

%%%%%%%%%%%%%%%%%%%%%%%%%%%%%%%%%%%%%%%%%%%%%%%%%%%%%%%%%%%%%%%%%%%%%%%%%%%%
%
\subsection{Construction of the first-order differential systems} \label{section_2_1}
%
%%%%%%%%%%%%%%%%%%%%%%%%%%%%%%%%%%%%%%%%%%%%%%%%%%%%%%%%%%%%%%%%%%%%%%%%%%%%

First, we construct systems of differential equations of type \eqref{c_101} 
from unimodular functions satisfying several conditions. 
Let $\mathsf{F}$ and $\mathsf{F}^{-1}$ be the Fourier transform 
and Fourier inverse transform on $L^2(\R)$ respectively:
\[
(\mathsf{F}f)(z)=\int f(x)e^{izx} \, dx, \quad 
(\mathsf{F}^{-1}F)(x)= \frac{1}{2\pi} \int F(z) e^{-izx} \, dz,  
\]
where $\int$ means integration on $\R$ and 
will always be used in this sense. 
Define the operations $\mathsf{J}^\sharp$ and $\mathsf{J}_\sharp$ for functions by 
\[
(\mathsf{J}^\sharp F)(z)=F^\sharp(z):=\overline{F(\bar{z})}, \quad (\mathsf{J}_\sharp f)(x):=\overline{f(-x)}
\]
so that they satisfy the commutative relation 
\[\mathsf{J}^\sharp \mathsf{F}=\mathsf{F}\mathsf{J}_\sharp.\] 
Let $\mathsf{M}_m$ be the operator of multiplication by $m \in L^\infty(\R)$, that is,  
$(\mathsf{M}_mF)(z)=m(z)F(z)$. 
For a unimodular function $u \in L_{\rm loc}^1(\R)$, 
we define the map $\mathsf{K}=\mathsf{K}_u: L^2(\R) \to L^2(\R)$ by 
\begin{equation} \label{c_202}
\mathsf{K} 
= \mathsf{F}^{-1} \mathsf{M}_u \mathsf{J}^\sharp\mathsf{F}
= \mathsf{F}^{-1} \mathsf{M}_u \mathsf{F}\mathsf{J}_\sharp. 
\end{equation}
Then $\mathsf{K}$ is ($\C$-)antilinear (also called conjugate linear), 
that is, 
$\mathsf{K}(af+bg)=\bar{a}\mathsf{K}f+\bar{b}\mathsf{K}g$ 
for $f,g \in L^2(\R)$ and $a,b \in \C$. 
On the properties of antilinear operators, 
see Huhtanen \cite{MR2749452} and Uhlmann \cite{Uhlmann}, 
for example. 
The operator $\mathsf{K}$ satisfies 
$(\mathsf{F}\mathsf{K}f)(z) = u(z) (\mathsf{F}f)^\sharp(z)$ 
for $z \in \R$ by definition. 
Also, $\mathsf{K}$ is isometric, 
because $\mathsf{F}$ is isometric up to scaling, 
$\mathsf{J}^\sharp$ and $\mathsf{J}_\sharp$ are clearly isometric, 
and  $\mathsf{M}_u$ is isometric for a unimodular function $u$. 
Further $\mathsf{K}^2=1$, since 
$
\mathsf{F}\mathsf{K}^2f(z)
 =
(\mathsf{F}\mathsf{K}(\mathsf{K}f))(z) 
= u(z) (\mathsf{F}\mathsf{K}f)^\sharp(z)
= u(z)u^\sharp(z)(\mathsf{F}f)(z)
$
and $u(z)u^\sharp(z)= \vert u(z) \vert^2 =1$ for $z \in \R$. 
We summarize the above properties of $\mathsf{K}$ as follows 
recalling that, for an antilinear operator $\mathsf{T}$,  
the adjoint $\mathsf{T}^\ast$ is defined by
$
\langle \mathsf{T}f, g \rangle
= \overline{\langle f, \mathsf{T}^\ast g \rangle} 
= \langle \mathsf{T}^\ast g, f \rangle
$, 
where 
$\langle f,g \rangle = \int f(x)\overline{g(x)}\,dx$ 
for $f, g \in L^2(\R)$. 

\begin{proposition} \label{prop_2_1}
For a unimodular function $u$ in $L_{\rm loc}^1(\R)$, 
the map $\mathsf{K}=\mathsf{K}_u$ is an antilinear isometric involution on $L^2(\R)$, 
in other words, $\mathsf{K}$ is an abstract conjugation on $L^2(\R)$. 
Hence, in particular, $\mathsf{K}$ is self-adjoint: $\mathsf{K}=\mathsf{K}^\ast$. 
\end{proposition}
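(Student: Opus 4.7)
The proposition is essentially a consolidation of the observations made in the paragraphs immediately preceding it, together with the new claim of self-adjointness. My plan is therefore to assemble the three properties that appear already in the text and then deduce self-adjointness as a purely formal consequence.

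First I would simply cite the preceding discussion: antilinearity follows from the fact that $\mathsf{F}$ and $\mathsf{F}^{-1}$ are $\C$-linear, $\mathsf{M}_u$ is $\C$-linear, and the single antilinear factor $\mathsf{J}^\sharp$ (equivalently $\mathsf{J}_\sharp$) in the decomposition \eqref{c_202} contributes the complex conjugation; isometry follows from the factorization, since each factor is isometric (Plancherel for $\mathsf{F}$, pointwise for $\mathsf{J}^\sharp, \mathsf{J}_\sharp$, and $|u|=1$ a.e. for $\mathsf{M}_u$); and the involution property $\mathsf{K}^2=\mathrm{id}$ has been verified by the computation $(\mathsf{F}\mathsf{K}^2 f)(z)=u(z)u^\sharp(z)(\mathsf{F}f)(z)=(\mathsf{F}f)(z)$.

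The only genuinely new content is the self-adjointness assertion $\mathsf{K}^\ast=\mathsf{K}$. For this I would use the antilinear polarization identity: starting from $\|\mathsf{K}f\|^2=\|f\|^2$ and applying it to $f+g$ and $f+ig$, combined with the antilinearity of $\mathsf{K}$ and the conventions
\[
\langle f,g\rangle=\int f(x)\overline{g(x)}\,dx,
\]
one obtains
\[
\langle \mathsf{K}f,\mathsf{K}g\rangle=\overline{\langle f,g\rangle}=\langle g,f\rangle
\]
for all $f,g\in L^2(\R)$. Then the involution property gives, for every $f,g$,
\[
\langle \mathsf{K}f,g\rangle=\langle \mathsf{K}f,\mathsf{K}(\mathsf{K}g)\rangle=\overline{\langle f,\mathsf{K}g\rangle},
\]
which is exactly the defining relation of the adjoint with $\mathsf{K}^\ast g=\mathsf{K}g$.

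There is no real obstacle here; the whole proposition is a bookkeeping statement, and the only substantive step is the standard fact that an antilinear isometric involution (an abstract conjugation) is automatically self-adjoint with respect to the convention for antilinear adjoints recalled just above the proposition. I would phrase the self-adjointness computation carefully to match that convention, since the notion of adjoint for antilinear operators is less standardized than in the linear case.
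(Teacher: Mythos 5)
Your proposal is correct and follows the paper's own route: the proposition is indeed just a summary of the antilinearity, isometry, and involution properties established in the preceding paragraphs, with self-adjointness following as the standard formal consequence for an abstract conjugation. Your polarization argument $\langle \mathsf{K}f,\mathsf{K}g\rangle=\overline{\langle f,g\rangle}$ combined with $\mathsf{K}^2=1$ correctly fills in the ``hence, in particular'' step, matching the antilinear-adjoint convention recalled just before the proposition.
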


For some special unimodular function $u$, 
the operator $\mathsf{K}$ is represented as an integral operator with a continuous integral kernel 
(see \cite[Theorem 2.1]{Su19_2}, for example). 
If we allow the integral kernel to be a tempered distribution, 
$\mathsf{K}$ is always an integral operator as follows.  
Every $u$ in $L_{\rm loc}^1(\R)$ can be regarded as a tempered distribution on $\R$ by 
$(u,g) = \int u(x)g(x) dx$, $g \in S(\R)$, 
where $S(\R)$ is the Schwartz space. 
Therefore,  
there exists a tempered distribution $k$ on $\R$ such that  
$u = \mathsf{F}k$, 
since the Fourier transform $\mathsf{F}$ extends to the space of tempered distribution $S'(\R)$ 
as a bijection 
(so $k=\mathsf{F}^{-1}u$). 
Then the operator $\mathsf{K}$ of \eqref{c_202} is expressed as the integral operator 
\[
(\mathsf{K}f)(x)=  (k\ast \mathsf{J}_\sharp f)(x)
= \int k(x+y)\overline{f(y)}\,dy 
\]
by the product rule of the Fourier transform, 
where $\ast$ stands for the additive convolution. 
In some cases, $k$ can be regarded as a function, but it never belongs to $L^1(\R)$ 
by the Riemann--Lebesgue theorem. 
The tempered distribution $k=\mathsf{F}^{-1}u$ is real-valued 
if and only if $u$ is symmetric, that is, $u$ satisfies \eqref{c_201}. 

For $t \in \R$, we define the compression 
$\mathsf{K}[t]: L^2(-\infty,t) ~\to~ L^2(-\infty,t)$ 
of 
$\mathsf{K}$ by 
\[
\mathsf{K}[t]:=\mathsf{P}_t\mathsf{K}\vert_{L^2(-\infty,t)}, 
\]
where $\mathsf{P}_t$ is the orthogonal projection 
from $L^2(\R)$ to $L^2(-\infty,t)$. 
Then, $\mathsf{K}[t]=\mathsf{P}_t\mathsf{K}\mathsf{P}_t$ on $L^2(-\infty,t)$. 
Since $\mathsf{K}$ is isometric on $L^2(\R)$, 
the inequality of operator norm $\Vert \mathsf{K}[t] \Vert_{\rm op} \leq 1$ always holds.  
Now, we introduce the following condition on $u$ in $L_{\rm loc}^1(\R)$: 
\begin{enumerate}
\item[(O1)] $\Vert \mathsf{K}[t] \Vert_{\rm op} <1$ for some $t \in \R$, 
\end{enumerate} 
where $\Vert \cdot \Vert_{\rm op}$ is the operator norm for operators on $L^2(-\infty,t)$. 
Note that if $\Vert \mathsf{K}[t] \Vert_{\rm op} <1$ for one $t$  
then it holds for all smaller $t$'s, 
since $\Vert \mathsf{K}[s]\Vert_{\rm op} \leq \Vert \mathsf{K}[t]\Vert_{\rm op}$ 
for $s <t$ by definition of the operator norm. 

Henceforth, we suppose that $u$ belongs to the subspace 
$U_{\rm loc}^{1}(\R)$ $(\subset L_{\rm loc}^1(\R))$ 
in order that $\mathsf{K}[t]1$ is defined as a function belonging to $L^2(-\infty,t)$ 
(cf. Proposition \ref{prop_4_1} below) and for other technical reasons.
If $\Vert \mathsf{K}[t] \Vert_{\rm op} <1$, the equations 
$(1 + \mathsf{K}[t])\varphi= -\mathsf{K}[t]1$ 
and 
$(1 - \mathsf{K}[t])\psi= \mathsf{K}[t]1$ 
for $\varphi$, $\psi \in L^2(-\infty,t)$ 
have unique solutions. 
Using the solutions $\varphi$ and $\psi$, 
we define the functions $\Phi$ and $\Psi$ on $\R$ 
by 
$\Phi := 1- \mathsf{K}(\varphi + \mathsf{P}_t1)$
and 
$\Psi := 1+ \mathsf{K}(\psi + \mathsf{P}_t1)$. 
Then they solve equations 
\begin{equation} \label{c_203}
\Phi + \mathsf{K}\mathsf{P}_t \Phi= 1,  
\end{equation}
\begin{equation} \label{c_204}
\Psi - \mathsf{K}\mathsf{P}_t \Psi = 1. 
\end{equation}
We find that the solutions $\Phi$ and $\Psi$ 
satisfy a certain system of partial differential equations 
if assuming the following technical conditions (Proposition \ref{prop_4_5} below). 
To state such conditions, 
we extend the action of $\mathsf{K}$ to 
the space of tempered distributions $S'(\R)$ 
by using \eqref{c_202} (see Section \ref{section_4_1} for details). 
Then the conditions are stated as follows:  
\begin{enumerate}
\item[(O2)] 
For the above solutions $\Phi(t,x)$ and $\Psi(t,x)$ of \eqref{c_203} and \eqref{c_204}, 
derivatives 
$(\partial/\partial t)\Phi$, $(\partial/\partial t)\Psi$, 
$(\partial/\partial t)\mathsf{F}\Phi$, $(\partial/\partial t)\mathsf{F}\Psi$ 
with respect to $t$ 
are defined as tempered distributions of $x$, 
and the commutativity 
\[
\frac{\partial}{\partial t}\mathsf{F} \Phi
= \mathsf{F}\frac{\partial}{\partial t} \Phi, 
\qquad 
\frac{\partial}{\partial t}\mathsf{F} \Psi
= \mathsf{F}\frac{\partial}{\partial t}\Psi,
\]
hold, respectively, whenever $\Vert \mathsf{K}[t] \Vert_{\rm op}<1$; 
\item[(O3)] The values of $\Phi(t,x)$ and $\Psi(t,x)$  at $x=t$ are well-defined 
and nonzero, whenever $\Vert \mathsf{K}[t] \Vert_{\rm op}<1$;  
\item[(O4)] $\mathsf{P}_t\mathsf{K}\delta_t$ is defined as a function belonging to $L^2(-\infty,t)$, 
or else the kernels of $(1 \pm \mathsf{K}[t]): \mathsf{P}_tS'(\R) \to \mathsf{P}_tS'(\R)$ 
are zero, whenever $\Vert \mathsf{K}[t] \Vert_{\rm op}<1$, 
\end{enumerate}
where $\delta_t(x)=\delta(x-t)$ for the Dirac distribution $\delta$ at the origin, 
$(\mathsf{P}_tf)(x) = \mathbf{1}_{(-\infty,0)}(x-t) f(x)$ for tempered distributions $f$,  
and the kernels of $(1 \pm \mathsf{K}[t])$ are considered as $\R$-linear maps. 
Condition (O4) guarantees that the solutions of 
\eqref{c_203} and \eqref{c_204} are unique in $S'(\R)$.

Now we introduce two functions $\tilde{A}(t,z)$ and $\tilde{B}(t,z)$  by
\begin{equation} \label{c_205}
\tilde{A}(t,z) := -\frac{iz}{2} (\mathsf{F}(1-\mathsf{P}_t)\Psi)(z), 
\quad 
-i\tilde{B}(t,z) := -\frac{iz}{2} (\mathsf{F}(1-\mathsf{P}_t)\Phi)(z), 
\end{equation}
where the Fourier transforms are taken as tempered distributions. 
They play a central role in all of the following results in this section. 

\begin{theorem} \label{thm_2_1} 
Let $u \in U_{\rm loc}^1(\R)$ 
and define $\mathsf{K}=\mathsf{K}_u$ as above.  
Suppose that conditions (O1), (O2), (O3), (O4) are satisfied, 
and let $T$ be a real number such that $\Vert \mathsf{K}[t] \Vert_{\rm op} <1$ 
for all $t<T$. Then 
\begin{enumerate} 
\item[(1)] 
$\tilde{A}(t,z)$ and $\tilde{B}(t,z)$ 
are defined by \eqref{c_205} 
as tempered distributions on $\R$ for each fixed $t<T$; 
\item[(2)]  
$\tilde{A}(t,z)$ and $\tilde{B}(t,z)$ extend to meromorphic functions on $(\C_+ \cup D)\setminus \R$, 
and they are holomorphic on $\C_+$ for each fixed $t<T$, where $D$ is the domain for $u$ in (U2); 
\item[(3)]  the limit equations 
$\lim_{z \to x}\tilde{A}(t,z)=\tilde{A}(t,x)$ 
and 
$\lim_{z \to x}\tilde{B}(t,z)=\tilde{B}(t,x)$ 
hold for almost all $x \in \R$ 
if $z$ tends to $x$ non-tangentially either in $\C_+$ or $\C_-$; 
\item[(4)]  
$\tilde{A}(t,z)$ and $\tilde{B}(t,z)$ 
 satisfy the functional equations 
\begin{equation} \label{c_206}
\tilde{A}(t,z) = u(z) \tilde{A}^\sharp(t,z), 
\quad 
\tilde{B}(t,z) = u(z) \tilde{B}^\sharp(t,z)
\end{equation}
for $z \in D$; 
\item[(5)]  $\tilde{A}(t,z)$ and $\tilde{B}(t,z)$ are continuous with respect to $t$ 
for fixed $z \in \C_+ \cup D$ except for their (isolated) singularities; 
\item[(6)]  $\tilde{A}(t,z)$ and $\tilde{B}(t,z)$  satisfy the first order system 
\begin{equation} \label{c_207} 
-\frac{\partial}{\partial t}
\begin{bmatrix}
\tilde{A}(t,z)\\ \tilde{B}(t,z)
\end{bmatrix}
= z 
\begin{bmatrix}
0 & -1 \\ 1 & 0
\end{bmatrix}
H(t)
\begin{bmatrix}
\tilde{A}(t,z)\\ \tilde{B}(t,z)
\end{bmatrix}
\end{equation}
for $t< T$ and $z \in \C_+ \cup D$, 
where $H(t)$ is the matrix-valued function defined by 
\begin{equation} \label{c_208} 
H(t)=H_u(t)
:=
\begin{bmatrix}
\alpha(t) & \beta(t) \\ \beta(t) & \gamma(t)
\end{bmatrix}
\end{equation}
and 
\begin{equation} \label{c_209}
\aligned 
\alpha(t) 
&= \frac{|\Phi(t,t)|^2}{\Re(\Phi(t,t)\overline{\Psi(t,t)})}
= \frac{1}{\Re(\Psi(t,t)/\Phi(t,t))}, \\
\beta(t) 
&= \frac {\Im(\Phi(t,t)\overline{\Psi(t,t)})}{\Re(\Phi(t,t)\overline{\Psi(t,t)})}
= \frac {\Im(\Phi(t,t)/\Psi(t,t))}{\Re(\Phi(t,t)/\Psi(t,t))}
= -\frac {\Im(\Psi(t,t)/\Phi(t,t))}{\Re(\Psi(t,t)/\Phi(t,t))}, \\
\gamma(t) 
&= \frac{|\Psi(t,t)|^2}{\Re(\Phi(t,t)\overline{\Psi(t,t)})}
= \frac{1}{\Re(\Phi(t,t)/\Psi(t,t))}. 
\endaligned 
\end{equation}
\item[(7)]  $H(t)$ of (6) 
belongs to ${\rm SL}_2(\R) \cap {\rm Sym}_2(\R)$ for all $t<T$. 
If $u$ is symmetric, $H(t)$ is diagonal. 
\end{enumerate}
\end{theorem}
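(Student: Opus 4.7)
The plan is to extract each claim from the explicit formulas \eqref{c_203}--\eqref{c_205} together with the defining properties of the abstract conjugation $\mathsf{K}$. For (1)--(3), I would split
\[
(1-\mathsf{P}_t)\Psi \;=\; \mathbf{1}_{[t,\infty)} \;+\; (1-\mathsf{P}_t)(\Psi-1),
\]
where the second summand is an $L^2(\R)$-function supported in $[t,\infty)$, and analogously for $\Phi$. This makes $(1-\mathsf{P}_t)\Psi$ a tempered distribution, and for $\Im z>0$ the Fourier--Laplace integral
\[
\tilde{A}(t,z) \;=\; \tfrac{1}{2}e^{izt} \;-\; \tfrac{iz}{2}\int_{t}^{\infty}\bigl(\Psi(t,x)-1\bigr)e^{izx}\,dx
\]
converges absolutely and is holomorphic in $\C_+$. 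The $L^2$-integral term lies in $H^2(\C_+)$, so non-tangential boundary values a.e.\ on $\R$ exist; an identical analysis handles $\tilde{B}$.

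Part (4) is where the unimodularity of $u$ enters decisively. Applying $\mathsf{K}$ to \eqref{c_203} and \eqref{c_204}, using antilinearity, $\mathsf{K}^2=1$ (Proposition \ref{prop_2_1}), and the computation $\mathsf{K}(1)=u(0)$ (legitimate under (U1) since $\mathsf{M}_u(2\pi\delta)=2\pi u(0)\delta$), one obtains
\[
\mathsf{K}\bigl[(1-\mathsf{P}_t)\Psi\bigr] + (1-\mathsf{P}_t)\Psi = 1+u(0),\qquad
\mathsf{K}\bigl[(1-\mathsf{P}_t)\Phi\bigr] - (1-\mathsf{P}_t)\Phi = u(0)-1.
\]
Taking Fourier transforms, invoking $\mathsf{F}\mathsf{K}f = u\cdot(\mathsf{F}f)^\sharp$ together with $\mathsf{F}(1)=2\pi\delta$, and multiplying by $-iz/2$ annihilates the $\delta$-distributions on the right-hand sides and produces the functional equations \eqref{c_206}. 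The meromorphic extension in (2) to $D\cap\C_-$ then follows by defining $\tilde{A},\tilde{B}$ there through \eqref{c_206} using the meromorphic $u$ from (U2), and the $\C_-$-limits in (3) follow likewise. For (5), continuity in $t$ of $\Phi(t,\cdot),\Psi(t,\cdot)$ in $L^2(\R)$---and hence of $\tilde{A}(t,z),\tilde{B}(t,z)$---is obtained from the Neumann series $(1\pm\mathsf{K}[t])^{-1}=\sum_{n\ge 0}(\mp\mathsf{K}[t])^n$, valid under (K1).

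The core of the theorem is (6). Differentiating \eqref{c_203} and \eqref{c_204} in $t$ under (K2)--(K4) produces the tempered-distribution identities
\[
(1-\mathsf{K}\mathsf{P}_t)\,\partial_t\Psi \;=\; \overline{\Psi(t,t)}\,\mathsf{K}\delta_t,\qquad
(1+\mathsf{K}\mathsf{P}_t)\,\partial_t\Phi \;=\; -\overline{\Phi(t,t)}\,\mathsf{K}\delta_t.
\]
Differentiating the defining formula \eqref{c_205} for $\tilde{A}$ in $t$ splits into a boundary contribution $\tfrac{iz}{2}\Psi(t,t)\,e^{izt}$ from the moving truncation, plus a bulk term $-\tfrac{iz}{2}\,\mathsf{F}[(1-\mathsf{P}_t)\partial_t\Psi]$. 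I would rewrite the bulk term using the differentiated equation above and the Fourier-side relation $\mathsf{F}\mathsf{K}=\mathsf{M}_u(\,\cdot\,)^\sharp$, and then use the functional equations from (4) to re-express the result as a linear combination of $\tilde{A}$ and $\tilde{B}$ whose coefficients depend only on $\Phi(t,t)$ and $\Psi(t,t)$. The analogous computation for $\partial_t\tilde{B}$ fills in the second row, and matching with $-\partial_t[\tilde{A},\tilde{B}]^{\mathsf{T}}=z\,J\,H(t)[\tilde{A},\tilde{B}]^{\mathsf{T}}$ reads off the entries \eqref{c_209}. The main obstacle is the rigorous distributional handling of $\mathsf{K}\delta_t$ after truncation by $(1-\mathsf{P}_t)$ and its pairing with the $H^2$-type objects $\tilde{A},\tilde{B}$; conditions (K3) and (K4) are imposed precisely to legitimize these manipulations.

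Finally, for (7), symmetry of $H(t)$ is built into \eqref{c_209}, and the identity $\alpha\gamma-\beta^2=1$ follows in one line from $|\Phi(t,t)\overline{\Psi(t,t)}|^2=\Re(\Phi(t,t)\overline{\Psi(t,t)})^2+\Im(\Phi(t,t)\overline{\Psi(t,t)})^2$. When $u$ is symmetric, the tempered distribution $k=\mathsf{F}^{-1}u$ is real-valued, so the formula $(\mathsf{K}f)(x)=\int k(x+y)\overline{f(y)}\,dy$ shows that $\mathsf{K}$ preserves the class of real-valued distributions; the real-valued solutions of \eqref{c_203}, \eqref{c_204} thus exist and coincide with $\Phi,\Psi$ by the uniqueness furnished by (K4). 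Then $\Phi(t,t)\overline{\Psi(t,t)}$ is real, whence $\beta(t)=0$ and $H(t)$ is diagonal.
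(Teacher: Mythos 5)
Your treatment of parts (1)--(5) and (7) is essentially sound and close to the paper's route (the functional equation via $\mathsf{K}^2=1$ and $\mathsf{K}1=u(0)$, the extension to $D\cap\C_-$ through \eqref{c_206}, continuity in $t$ via the invertibility of $1\pm\mathsf{K}[t]$, and the symmetric case via realness of $k$). One inaccuracy there: $(1-\mathsf{P}_t)(\Psi-1)$ is \emph{not} an $L^2$-function, because $\Psi-1$ contains $\mathsf{K}\mathsf{P}_t1$, which by \eqref{c_401} equals $u(0)\mathbf{1}_{[-t,\infty)}$ plus an $L^2$-function; the non-$L^2$ step function survives truncation to $[t,\infty)$ since $u(0)\neq0$ by (U1). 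The conclusions of (1)--(3) still hold (the step function contributes a term $\propto e^{icz}/(-iz)$, holomorphic in $\C_+$ with boundary values off $z=0$), but establishing this decomposition is exactly the content of Proposition \ref{prop_4_1}, where the H\"older condition in (U1) is used; you cannot shortcut it.

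The genuine gap is in (6), which is the core of the theorem. You correctly write down the $t$-differentiated equations $(1-\mathsf{K}\mathsf{P}_t)\partial_t\Psi=\overline{\Psi(t,t)}\,\mathsf{K}\delta_t$ and $(1+\mathsf{K}\mathsf{P}_t)\partial_t\Phi=-\overline{\Phi(t,t)}\,\mathsf{K}\delta_t$, but then assert that the bulk term can be re-expressed as a combination of $\tilde{A},\tilde{B}$ ``using the functional equations from (4).'' The functional equations play no role here and cannot produce the identification. What is actually needed is: (i) the analogous $x$-differentiated equations $(1+\mathsf{K}\mathsf{P}_t)\partial_x\Psi=\overline{\Psi(t,t)}\,\mathsf{K}\delta_t$ and $(1-\mathsf{K}\mathsf{P}_t)\partial_x\Phi=-\overline{\Phi(t,t)}\,\mathsf{K}\delta_t$; (ii) the observation that, because $\mathsf{K}$ is antilinear, a combination $i\beta(t)\partial_x\Psi-\gamma(t)\partial_x\Phi$ with \emph{real} $\beta,\gamma$ satisfies an equation of the same type as $\partial_t\Psi$ with right-hand side $\overline{(-i\beta\Psi(t,t)+\gamma\Phi(t,t))}\,\mathsf{K}\delta_t$; (iii) the solvability of the real-linear system $\Psi(t,t)=-i\beta\Psi(t,t)+\gamma\Phi(t,t)$, $\Phi(t,t)=\alpha\Psi(t,t)+i\beta\Phi(t,t)$, which is precisely Lemma \ref{lem_4_2} and is the sole source of the formulas \eqref{c_209} and of $\det H=1$; and (iv) the uniqueness of solutions of $(1\pm\mathsf{K}\mathsf{P}_t)f=c\,\mathsf{K}\delta_t$ guaranteed by (K4), which yields $\partial_t\Psi=i\beta\partial_x\Psi-\gamma\partial_x\Phi$ and its companion for $\Phi$ (Proposition \ref{prop_4_5}). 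Only after this $x$-space system is in hand does your Fourier-side computation close up, with the boundary terms $\frac{iz}{2}\Psi(t,t)e^{izt}$ cancelling exactly because of the relation in (iii). As written, your sketch does not contain the idea that determines $H(t)$, so (6) is not proved.
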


Notation $\alpha(t)$, $\beta(t)$, $\gamma(t)$ in \eqref{c_208} and \eqref{c_209} 
correspond to $\alpha'(t)$, $\beta'(t)$, $\gamma'(t)$ in  de Branges' book \cite{MR0229011}. 
The first-order differential system in (6) has a different range of complex parameter $z$ 
from usual theory of canonical systems. Such systems are called 
{\it lacunary} canonical systems in \cite{Su19_3}.  

%%%%%%%%%%%%%%%%%%%%%%%%%%%%%%%%%%%%%%%%%%%%%%%%%%%%%%%%%%%%%%%%%%%%%%%%%%%%
%
\subsection{Chains of reproducing kernel Hilbert spaces} \label{section_2_2}
%
%%%%%%%%%%%%%%%%%%%%%%%%%%%%%%%%%%%%%%%%%%%%%%%%%%%%%%%%%%%%%%%%%%%%%%%%%%%%

Second, we describe that 
the system of differential equations \eqref{c_207} 
controls the structure of a chain of reproducing kernel Hilbert spaces  
if assuming further conditions for a given unimodular function.

For $\mathsf{K}=\mathsf{K}_u$ and each $t \in \R$, we denote by $\mathcal{V}_t(u)$ 
the space of all functions $f \in L^2(\R)$ 
such that both $f$ and $\mathsf{K}f$ have their supports in $[t, \infty)$: 
\[
\mathcal{V}_t(u) = L^2(t,\infty) \cap \mathsf{K} L^2(t,\infty). 
\]
By definition, $\mathcal{V}_t(u)$ is a conjugation-invariant subspace of $L^2(\R)$ 
with respect to the conjugation $\mathsf{K}$. 
We do not need any of the conditions (O1)--(O4) to define $\mathcal{V}_t(u)$, 
but the following condition is necessary 
for the discussion about $\mathcal{V}_t(u)$ to be meaningful: 
\begin{enumerate}
\item[(O5)] $\mathcal{V}_t(u)$ is non-zero for some $t \in \R$. 
\end{enumerate}

If $\mathcal{V}_t(u)$ is non-zero, 
 $\mathsf{F}(\mathcal{V}_t(u))$ is a reproducing kernel Hilbert space 
consisting of functions on $\C_+ \cup D$ 
that are holomorphic on $\C_+$ and meromorphic on $D \cap \C_-$. 
The reproducing kernel $j(t; z, w)$ of $\mathsf{F}(\mathcal{V}_t(u))$ 
is expressed as $j(t;z,w) = \frac{1}{2\pi} \langle Y_w^t, Y_z^t \rangle$ 
using the vector $Y_z^t \in \mathcal{V}_t(u)$ 
satisfying $\langle f, \overline{Y_z^t} \rangle=\mathsf{F}f(z)$ 
for any $f \in \mathcal{V}_t(u)$ (see Section \ref{section_5} below for details). 
More specifically, $j(t;z,w)$ has the following explicit formula  
consisting of functions defined in \eqref{c_205}. 
\begin{theorem} \label{thm_2_2}
Let $u\in U_{\rm loc}^1(\R)$. 
Suppose that conditions (O1), (O2), (O3), (O4), (O5) are satisfied, 
and let $t \in \R$ such that 
$\Vert \mathsf{K}[t] \Vert_{\rm op}<1$ 
and $\mathcal{V}_t(u)\not=0$.  
Let $j(t;z,w)$ be the reproducing kernel of  $\mathsf{F}(\mathcal{V}_t(u))$. 
Then,  
\begin{equation} \label{c_210} 
j(t;z,w) = \frac{1}{2\pi} \langle Y_w^t, Y_z^t \rangle
= 
\frac{ 
\overline{\tilde{A}(t,z)}\tilde{B}(t,w)  
- \tilde{A}(t,w)\overline{\tilde{B}(t,z)}
}{\pi(w-\bar{z})} 
\end{equation}
holds for $z,w \in \C_+ \cup D$, where $D$ is the domain for $u$ in (U2).  
\end{theorem}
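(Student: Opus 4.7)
The plan is to identify the Riesz representer $Y_z^t\in\mathcal{V}_t(u)$ explicitly from $\Phi(t,\cdot)$, $\Psi(t,\cdot)$, and an exponential source, and then compute $\langle Y_w^t, Y_z^t\rangle$ by direct expansion. For $z\in\C_+$ the functional $f\mapsto \mathsf{F}f(z)=\int_t^\infty f(x)e^{izx}\,dx$ is bounded on $\mathcal{V}_t(u)\subset L^2(t,\infty)$, since $e^{izx}\mathbf{1}_{(t,\infty)}\in L^2(t,\infty)$ whenever $\Im z>0$. Riesz representation produces a unique $Y_z^t\in\mathcal{V}_t(u)$ satisfying $\langle f,\overline{Y_z^t}\rangle=\mathsf{F}f(z)$ for every $f\in\mathcal{V}_t(u)$, and the first equality $j(t;z,w)=\tfrac{1}{2\pi}\langle Y_w^t,Y_z^t\rangle$ in \eqref{c_210} is then the standard RKHS identity relating the reproducing kernel of $\mathsf{F}(\mathcal{V}_t(u))$ to its Riesz representer through Parseval.

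Next I would write $Y_z^t$ explicitly. Since $\mathcal{V}_t(u)=\ker(\mathsf{P}_t\mathsf{K}|_{L^2(t,\infty)})$, a short computation with the antilinear adjoint $(\mathsf{P}_t\mathsf{K})^\ast g=(1-\mathsf{P}_t)\mathsf{K}g$ (for $g\in L^2(-\infty,t)$) yields $(\mathsf{P}_t\mathsf{K})(\mathsf{P}_t\mathsf{K})^\ast=1-\mathsf{K}[t]^2$ on $L^2(-\infty,t)$, so the orthogonal projection onto $\mathcal{V}_t(u)$ should take the form
\[
\mathsf{Q}_t=1-(1-\mathsf{P}_t)\mathsf{K}(1-\mathsf{K}[t]^2)^{-1}\mathsf{P}_t\mathsf{K}.
\]
Condition (K1) makes $1-\mathsf{K}[t]^2$ invertible, and the factorisation $(1-\mathsf{K}[t]^2)^{-1}=(1-\mathsf{K}[t])^{-1}(1+\mathsf{K}[t])^{-1}$ ties the resolvent to the very operators that produced $\Phi,\Psi$ via \eqref{c_203} and \eqref{c_204}. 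Applying $\mathsf{Q}_t$ to the natural source $e^{izx}\mathbf{1}_{(t,\infty)}$ (or its conjugate, as dictated by matching the bilinear pairing in the reproducing relation) should then present $Y_z^t$ as $e^{izx}\mathbf{1}_{(t,\infty)}$ plus a correction linear in $\Phi(t,\cdot)$ and $\Psi(t,\cdot)$, whose $z$-dependent scalar coefficients are precisely $\overline{\tilde A(t,z)}$ and $\overline{\tilde B(t,z)}$ by the definitions \eqref{c_205}.

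Finally I would expand $\langle Y_w^t,Y_z^t\rangle$ from this explicit form. The leading exponential cross-term $\int_t^\infty e^{iwx}\overline{e^{izx}}\,dx=ie^{i(w-\bar z)t}/(w-\bar z)$ supplies the denominator $w-\bar z$, and the mixed and correction pieces should rearrange, via \eqref{c_203}--\eqref{c_205} and Plancherel, into the numerator $\overline{\tilde A(t,z)}\tilde B(t,w)-\tilde A(t,w)\overline{\tilde B(t,z)}$, yielding \eqref{c_210} for $z,w\in\C_+$. The extension to $z,w\in\C_+\cup D$ then follows from the meromorphic continuation of $\tilde A,\tilde B$ provided by Theorem \ref{thm_2_1}(2) together with the symmetry \eqref{c_206}.

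The hard part will be the explicit identification of $Y_z^t$. The antilinearity of $\mathsf{K}[t]$ makes $1\pm\mathsf{K}[t]$ merely $\R$-linear, so the resolvent decomposition of $(1-\mathsf{K}[t]^2)^{-1}$ demands careful tracking of where complex conjugates appear; moreover, $\mathcal{V}_t(u)$ is not closed under pointwise complex conjugation unless $u$ is symmetric in the sense of \eqref{c_201}, so the reconciliation between $Y_z^t\in\mathcal{V}_t(u)$ and $\overline{Y_z^t}$ appearing in the reproducing relation cannot invoke a trivial involution and must instead exploit the $\mathsf{K}$-invariant structure compatibly with $\Phi$ and $\Psi$. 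Once $Y_z^t$ is in hand, the subsequent manipulations reduce to routine algebra with \eqref{c_205} and Plancherel.
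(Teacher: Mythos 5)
Your setup is sound up to and including the projection formula: the identity $(\mathsf{P}_t\mathsf{K})(\mathsf{P}_t\mathsf{K})^\ast=1-\mathsf{K}[t]^2$ on $L^2(-\infty,t)$ is correct, $\mathsf{Q}_t=1-(1-\mathsf{P}_t)\mathsf{K}(1-\mathsf{K}[t]^2)^{-1}\mathsf{P}_t\mathsf{K}$ is indeed the orthogonal projection onto $\mathcal{V}_t(u)$, and $Y_z^t=\mathsf{Q}_t(1-\mathsf{P}_t)e_z$ coincides with the paper's $Y_z^t=(1-\mathsf{P}_t)\tfrac12(a_z^t+b_z^t)$ from Proposition \ref{prop_5_6} after splitting $(1-\mathsf{K}[t]^2)^{-1}=\tfrac12\bigl[(1+\mathsf{K}[t])^{-1}+(1-\mathsf{K}[t])^{-1}\bigr]$. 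The gap is the next claim: that $\mathsf{Q}_t(1-\mathsf{P}_t)e_z$ equals $(1-\mathsf{P}_t)e_z$ plus a correction \emph{lying in the two-dimensional span} of $\Phi(t,\cdot)$ and $\Psi(t,\cdot)$ with scalar coefficients $\overline{\tilde A(t,z)}$, $\overline{\tilde B(t,z)}$. This is false in general. The functions $\Phi,\Psi$ arise from applying the resolvents $(1\pm\mathsf{K}[t])^{-1}$ to the fixed source $\mathsf{K}[t]1$, whereas the correction in $Y_z^t$ applies them to the genuinely $z$-dependent source $\mathsf{P}_t\mathsf{K}(1-\mathsf{P}_t)e_z$; there is no reason for the latter to land in a fixed two-dimensional subspace as $z$ varies, and once that ansatz fails the concluding ``routine algebra with \eqref{c_205} and Plancherel'' has nothing to compute with. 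You have skipped over precisely the hard part of the theorem.

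The mechanism that actually works (and that the paper uses) is differential, not algebraic: one applies $\partial/\partial x-iz$ to the resolvent equation satisfied by $a_z^t+b_z^t$. This operator annihilates the exponential source $e_z$, and by the commutation rules \eqref{c_403}--\eqref{c_404} it converts that equation into the \emph{delta-source} equations \eqref{c_501}--\eqref{c_502}, whose solutions $\phi^\pm$ are the $x$-derivatives of combinations of $\Psi,\Phi$ by \eqref{c_503}--\eqref{c_504}. Uniqueness (condition (K4)) then gives $(\partial_x-iz)(a_z^t+b_z^t)$ as a combination of $\phi^\pm(t,\cdot)$ whose coefficients are the boundary values $a_z^t(t)\pm b_z^t(t)$ (Proposition \ref{prop_5_8}); a separate and nontrivial pairing argument (Proposition \ref{prop_5_9}) identifies those boundary values with $e^{izt}\mp\int_t^\infty\overline{\phi^\mp}e^{izx}dx$, which Proposition \ref{prop_5_10} converts into $\tilde A(t,z)$ and $\tilde B(t,z)$. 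Integrating $\langle Y_w^t,Y_z^t\rangle$ by parts against $\partial_x-iz$ is what produces the factor $w-\bar z$ in the denominator together with the product structure of the numerator. None of these steps appears in your outline, so as written the proposal does not constitute a proof; it correctly identifies $Y_z^t$ but then asserts, rather than derives, the formula \eqref{c_210}.
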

\begin{remark} 
A formula similar to \eqref{c_210} is proved in \cite[Section 4.2]{Su19_1}, 
but the assumed set of conditions in Theorem \ref{thm_2_2} 
is quite different from that in \cite{Su19_1}. 
See also the comments after Theorem \ref{thm_2_6}.
\end{remark}

As is clear from the definition, 
the spaces $\{\mathcal{V}_t(u)\}_{t \in \R}$ are totally ordered by set-theoretical inclusion 
$\mathcal{V}_s(u) \subset \mathcal{V}_t(u)$ for $t<s$ 
and the inclusion is an isometric embedding as a Hilbert space. 
However, we should note that, unlike de Branges spaces, 
not necessarily all $\mathsf{K}$-invariant subspaces of $\mathcal{V}_t(u)$ 
have the shape of $\mathcal{V}_s(u)$ ($t<s$), and therefore, 
not necessarily the set of all $\mathsf{K}$-invariant subspaces of $\mathcal{V}_t(u)$ is totally ordered.

If $\mathcal{V}_t(u) = \{0\}$ for some $t \in \R$, 
then $\mathcal{V}_s(u) =\{0\}$ for all $s \geq t$ by definition. 
Therefore, it makes sense to consider the value 
\[
t_0=t_0(u):=\sup\{ t \in \R \,|\, \mathcal{V}_t(u) \not=\{0\}\}. 
\]
Under condition (O5), $t_0$ is determined as a finite real number or $+\infty$. 
To use the above results in the study 
of the chain of spaces $\{\mathsf{F}(\mathcal{V}_t(u))\}_{t<t_0}$, 
we introduce the following conditions: 
\begin{enumerate}
\item[(O6)]  $\Vert \mathsf{K}_u[t] \Vert_{\rm op} < 1$ for every $t<t_0$;  
\end{enumerate}
\begin{enumerate}
\item[(O7)]  $\Re(\Phi(t,t)\overline{\Psi(t,t)})>0$ for every $t < t_0$. 
\end{enumerate}
Note that (O1) is automatically satisfied assuming (O5) and (O6). 

Let $H^\infty=H^\infty(\C_+)$ be the space of all bounded analytic functions in $\C_+$. 
A function $\theta \in H^\infty$  
is called an {\it inner function} in $\C_+$ 
if $\lim_{y \to 0+}|\theta(x+iy)|=1$ for almost all $x \in \R$ 
with respect to the Lebesgue measure. 
An inner function $\theta$ 
defines a measurable unimodular function on $\R$ 
by taking a nontangential limit at a point of $\R$ 
and extends to the lower half-plane 
by setting $\theta(z):=1/\theta^\sharp (z)$ for $z \in \C_-$, 
in particular, $\theta$ is meromorphic on $\C\setminus \R$. 
If an inner function $\theta$ in $\C_+$ extends to a meromorphic function on $\C$, 
it is called a {\it meromorphic inner function} in $\C_+$. 
For an inner function $\theta$, the space  
\[
\mathcal{K}(\theta) = H^2(\C_+) \ominus \theta H^2(\C_+)
\]
defined as an orthogonal complement is called a {\it model subspace}. 

\begin{theorem} \label{thm_2_3}
Let $u \in U_{\rm loc}^1(\R)$. 
Suppose that conditions (O2), (O3), (O4), (O5), (O6), (O7) are satisfied. 
Then the function defined by 
\begin{equation} \label{c_211}
\theta(t,z):=\frac{\tilde{A}(t,z) + i \tilde{B}(t,z)}{\tilde{A}(t,z) - i \tilde{B}(t,z)}
\end{equation}
is an inner function in $\C_+$ and 
$\mathsf{F}(\mathcal{V}_t(u)) = \mathcal{K}(\theta(t,z))$ 
for every $t < t_0$. 
\end{theorem}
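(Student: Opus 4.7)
The plan is to establish separately that $\theta(t,z)$ is an inner function on $\C_+$ and that $\mathsf{F}(\mathcal{V}_t(u))$ coincides with $\mathcal{K}(\theta(t,z))$, the latter by an explicit reproducing-kernel comparison. For the inner function property I would proceed in three stages: boundary modulus, interior bound, and holomorphy.

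The boundary condition $|\theta(t,x)|=1$ almost everywhere on $\R$ I would read off from \eqref{c_206}: combining the two functional equations gives $\tilde A(t,x) + i\tilde B(t,x) = u(x)\,\overline{\tilde A(t,x) - i\tilde B(t,x)}$, so $|\theta(t,x)| = |u(x)| = 1$. For $|\theta(t,z)| \leq 1$ in $\C_+$, I would use positive-semidefiniteness of the kernel $j(t;z,w)$ from Theorem \ref{thm_2_2}: the diagonal value computed from \eqref{c_210} is $j(t;z,z) = \Im\bigl(\overline{\tilde A(t,z)}\tilde B(t,z)\bigr)/(\pi\,\Im z)$, which together with the algebraic identity $|\tilde A - i\tilde B|^2 - |\tilde A + i\tilde B|^2 = 4\,\Im(\overline{\tilde A}\tilde B)$ gives the bound directly. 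For holomorphy of $\theta$ on $\C_+$, I would argue that any zero $z_0 \in \C_+$ of $E(t,\cdot) := \tilde A(t,\cdot) - i\tilde B(t,\cdot)$ must, by the modulus bound and a local Taylor-expansion comparison, also be a zero of $\tilde A(t,\cdot) + i\tilde B(t,\cdot)$ of at least the same order, so $\theta$ is holomorphic after common-factor cancellation.

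For the identification $\mathsf{F}(\mathcal{V}_t(u)) = \mathcal{K}(\theta(t,z))$, my plan is to rewrite $j(t;z,w)$ in the form of a model-space kernel. A direct algebraic rearrangement of \eqref{c_210} will give
\[
\overline{\tilde A(t,z)}\tilde B(t,w) - \tilde A(t,w)\overline{\tilde B(t,z)} = \tfrac{i}{2}\,\overline{E(t,z)}\,E(t,w)\,\bigl(1 - \overline{\theta(t,z)}\theta(t,w)\bigr),
\]
exhibiting $j(t;z,w)$ as the product of $\overline{E(t,z)}E(t,w)$ with the standard reproducing kernel of $\mathcal{K}(\theta(t,z))$. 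I would then show that division by $E(t,\cdot)$ supplies a unitary isomorphism $\mathsf{F}(\mathcal{V}_t(u)) \to \mathcal{K}(\theta(t,z))$ implementing this kernel identification, yielding the claimed equality. The main obstacle I expect is the bookkeeping on boundary behaviour: one must justify that $F/E(t,\cdot) \in H^2(\C_+)$ for every $F \in \mathsf{F}(\mathcal{V}_t(u))$ and that the image exhausts $\mathcal{K}(\theta(t,z))$. For this, the characterization $\mathsf{F}(\mathcal{V}_t(u)) = e^{itz}H^2(\C_+) \cap u\,e^{-itz}H^2(\C_-)$, derivable from the definition of $\mathcal{V}_t(u)$ and the Paley--Wiener theorem, together with conditions (K5)--(K7), should provide the needed control.
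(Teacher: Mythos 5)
Your overall architecture --- interior bound $|\theta(t,z)|\le 1$, unimodular boundary values, then a reproducing-kernel comparison --- matches the paper's, but the decisive step is done by a genuinely different and more economical argument. To get $j(t;z,z)\ge 0$, hence $|\tilde{A}+i\tilde{B}|\le|\tilde{A}-i\tilde{B}|$ via $|\tilde{A}-i\tilde{B}|^2-|\tilde{A}+i\tilde{B}|^2=4\,\Im(\overline{\tilde{A}}\tilde{B})$, you use only the first equality of \eqref{c_210}, namely $j(t;z,z)=\frac{1}{2\pi}\Vert Y_z^t\Vert^2\ge 0$. The paper instead integrates the system \eqref{c_207} to represent $j(t;z,z)$ as $\lim_{s\to t_0}j(s;z,z)$ plus an integral of a quadratic form in $H(s)$, and invokes (K7) together with Proposition \ref{prop_5_11} to conclude nonnegativity. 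Your route does not need (K7) or the Hamiltonian for this theorem; what the paper's detour buys is the monotone integral representation of $j(t;z,z)$ that is reused for Theorem \ref{thm_2_4}. You also make explicit two points the paper leaves implicit: a.e.\ unimodularity of the boundary values via \eqref{c_206}, and removability of the zeros of $\tilde{A}-i\tilde{B}$ in $\C_+$.

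On the identification with the model space, both arguments reduce to the same kernel computation, and your factorization
\[
j(t;z,w)=\overline{E(t,z)}\,E(t,w)\cdot\frac{1-\overline{\theta(t,z)}\,\theta(t,w)}{2\pi i(\bar z-w)},
\qquad E(t,\cdot):=\tilde{A}(t,\cdot)-i\tilde{B}(t,\cdot),
\]
is the correct one. Be aware that this identifies $\mathsf{F}(\mathcal{V}_t(u))$ with $E(t,\cdot)\,\mathcal{K}(\theta(t,\cdot))$ through the multiplier $E(t,\cdot)$, not with $\mathcal{K}(\theta(t,\cdot))$ as a literal set of functions: in the Paley--Wiener example of Section \ref{section_3} one has $E(t,z)=e^{itz}$ and $\mathsf{F}(\mathcal{V}_t(u))=e^{itz}\mathcal{K}(e^{2i(a-t)z})$. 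The paper's proof asserts outright that the two kernels coincide, so its stated equality must be read through this canonical isometry; your version, which tracks the factor $\overline{E(t,z)}E(t,w)$ and the map $F\mapsto F/E(t,\cdot)$, is the defensible reading. Finally, the boundary bookkeeping you anticipate as the main obstacle is not needed: a positive-definite kernel determines its Hilbert space uniquely, so the displayed identity alone yields $\mathsf{F}(\mathcal{V}_t(u))=E(t,\cdot)\mathcal{K}(\theta(t,\cdot))$ with $F\mapsto F/E(t,\cdot)$ unitary.
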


Because of the connection with the theory of de Branges spaces, 
we are particularly interested in unimodular functions of the form 
$u(z)=M^\sharp(z)/M(z)$ for some meromorphic function $M(z)$ on $\C$. 
Using the functions $\tilde{A}(t,z)$ and $\tilde{B}(t,z)$ in \eqref{c_205}, we define 
\begin{equation} \label{c_212}
\aligned 
A(t,z) := & M(z)\tilde{A}(t,z), \quad B(t,z) := M(z)\tilde{B}(t,z),  \\
& E(t,z) :=A(t,z) -iB(t,z). 
\endaligned 
\end{equation}
With the assumptions in Theorem \ref{thm_2_3}, 
$\Theta(t,z):=E^\sharp(t,z)/E(t,z)$ extends to a meromorphic inner function 
for every $t<t_0$. 
In general, if $\Theta$ is a meromorphic inner function, 
there exists $E \in \mathbb{HB}$ 
such that $\Theta=E^\sharp/E$ 
and the model subspace $\mathcal{K}(\Theta)$ 
is isometrically isomorphic to the de Branges space $\mathcal{H}(E)$ 
by the map $F \mapsto EF$ (\cite[\S 2.3 and \S2.4]{MR2016246}), 
where $\mathbb{HB}$ is the subspace consisting of 
functions that have no zeros on $\R$ as before. 
Therefore, 
if $\Theta(\tau,z)$ is a meromorphic inner function, 
it is expected that $H(t)$ in Theorem \ref{thm_2_1} 
is nothing but the structure Hamiltonian of the de Branges space $\mathcal{H}(E(\tau,z))$. 
To realize this expectation, we introduce one more condition: 
\begin{enumerate}
\item[(O8)]  $\mathcal{V}_{t_0}(u)=\{0\}$ if $t_0 <\infty$. 
\end{enumerate}
Note that, when $t_0<\infty$, 
both $\mathcal{V}_{t_0}(u)=\{0\}$ and $\mathcal{V}_{t_0}(u)\not=\{0\}$ can occur. 
See examples in Section \ref{section_3}.  

\begin{theorem} \label{thm_2_4} 
Let $u\in U_{\rm loc}^1(\R)$. 
Suppose that $u=M^\sharp/M$ for some meromorphic function $M$ on $\C$ 
that is holomorphic on $\C_+ \cup \R$ and has no zeros in $\C_+$. 
Suppose that (O2), (O3), (O4), (O5), (O6), (O7), (O8) are satisfied. 
Define $E(t,z)$ by \eqref{c_205} and \eqref{c_212} for $t<t_0$. 
Then, for any $t_1<t_0$, 
$E(t_1,z)$ 
 is an entire function of $\overline{\mathbb{HB}}$ 
and $H(t)$ on $[t_1,t_0) $ 
defined by \eqref{c_208} and \eqref{c_209} is the structure Hamiltonian 
of the de Branges space $\mathcal{H}(E(t_1,z))$.  
\end{theorem}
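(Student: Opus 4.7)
The plan is to establish, in order, three things: (i) $E(t_1,z)$ is entire on $\C$, (ii) $E(t_1,z)\in \overline{\mathbb{HB}}$, and (iii) the matrix $H(t)$ of \eqref{c_208}--\eqref{c_209} coincides with the structure Hamiltonian of $\mathcal{H}(E(t_1,z))$ on $[t_1,t_0)$. The key ingredients will be: Theorem \ref{thm_2_1} (the system \eqref{c_207}, the functional equations \eqref{c_206}, and the meromorphic extensions of $\tilde A,\tilde B$), Theorem \ref{thm_2_2} (the reproducing-kernel formula \eqref{c_210}), Theorem \ref{thm_2_3} (the inner-function structure of $\theta(t,z)$), and the factorization $u=M^\sharp/M$ with $M$ holomorphic and zero-free on $\C_+\cup\R$, which is precisely what will let me upgrade the meromorphic objects $\tilde A,\tilde B$ to entire HB generators.

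For (i), I would combine $\tilde A=u\tilde A^\sharp$ from Theorem \ref{thm_2_1}(4) with $u=M^\sharp/M$ to obtain $M\tilde A=M^\sharp\tilde A^\sharp$, i.e., $A=A^\sharp$; likewise $B=B^\sharp$. On $\C_+$, both $A=M\tilde A$ and $B=M\tilde B$ are holomorphic as products of holomorphic functions. Since $M^\sharp$ is holomorphic on $\C_-\cup\R$ and $\tilde A^\sharp,\tilde B^\sharp$ are holomorphic on $\C_-$ (as reflections of functions holomorphic on $\C_+$), the alternative expressions $A=M^\sharp\tilde A^\sharp$ and $B=M^\sharp\tilde B^\sharp$ holomorphically extend $A,B$ to $\C_-$. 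Theorem \ref{thm_2_1}(iii) supplies matching non-tangential boundary values from both sides on $\R$, and $A=A^\sharp$ forces those common values to be real. A distributional Schwarz-reflection / edge-of-the-wedge argument then removes $\R$ as a singularity, yielding that $A(t_1,z),B(t_1,z)$, and hence $E(t_1,z)=A(t_1,z)-iB(t_1,z)$, are entire on $\C$. For (ii), a short computation using \eqref{c_206} and $u=M^\sharp/M$ gives $\Theta(t_1,z):=E^\sharp(t_1,z)/E(t_1,z)=(\tilde A+i\tilde B)/(\tilde A-i\tilde B)=\theta(t_1,z)$, which by Theorem \ref{thm_2_3} is inner; since $t_1<t_0$ forces $\mathcal{V}_{t_1}(u)\ne\{0\}$, $\mathcal{K}(\theta(t_1,\cdot))$ is non-trivial and hence $\theta(t_1,z)$ is non-constant, so the maximum modulus principle delivers $|\Theta(t_1,z)|<1$ strictly on $\C_+$, placing $E(t_1,z)$ in $\overline{\mathbb{HB}}$.

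For (iii), the same argument applied at each $t\in[t_1,t_0)$ yields $E(t,z)\in\overline{\mathbb{HB}}$; the isometric inclusions $\mathcal{V}_s(u)\subset\mathcal{V}_t(u)$ for $t\le s<t_0$ then translate, via Theorems \ref{thm_2_2} and \ref{thm_2_3} together with the standard isomorphism $\mathcal{K}(\Theta(t,z))\xrightarrow{F\mapsto E(t,z)F}\mathcal{H}(E(t,z))$, into an isometric chain $\mathcal{H}(E(s,z))\subset\mathcal{H}(E(t,z))$ of de Branges subspaces of $\mathcal{H}(E(t_1,z))$; continuity in $t$ is Theorem \ref{thm_2_1}(5). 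Condition (K8) forces $\mathcal{H}(E(t,z))\to\{0\}$ as $t\to t_0$ when $t_0<\infty$, while the case $t_0=\infty$ is handled by the observation $\bigcap_{t\in\R}\mathcal{V}_t(u)=\{0\}$, since any element would have support in $[t,\infty)$ for every $t$ and therefore vanish; thus the chain exhausts all de Branges subspaces of $\mathcal{H}(E(t_1,z))$. Because $A,B$ are now entire in $z$, the system \eqref{c_207} extends from $\C_+\cup D$ to all of $\C$ by analytic continuation, and by Theorem \ref{thm_2_1}(7) together with (K7), $H(t)$ is a positive-definite element of ${\rm SL}_2(\R)\cap{\rm Sym}_2(\R)$. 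The uniqueness theorem for structure Hamiltonians attached to a de Branges space (cf.\ \cite{RomWor19}) then identifies $H(t)$ on $[t_1,t_0)$ as the structure Hamiltonian of $\mathcal{H}(E(t_1,z))$.

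The principal obstacle is step (i): Theorem \ref{thm_2_1}(iii) only guarantees a.e.\ non-tangential matching of boundary values, which is weaker than the classical continuity hypothesis of Schwarz reflection, so the bulk of the work will lie in justifying the distributional edge-of-the-wedge removal of $\R$ as a singularity for $A$ and $B$. A secondary point requiring care is verifying in step (iii) that the isometric chain genuinely realizes the canonical chain of de Branges subspaces in the sense of de Branges' theorem, i.e., that no subspace is skipped and the parameterization is monotone; this is where (K6), (K7), and (K8) must interlock in a nontrivial way.
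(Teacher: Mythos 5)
Your proposal is correct, and its three-step skeleton — entireness of $A,B,E$ via the functional equations and gluing across $\R$, membership in $\overline{\mathbb{HB}}$ via $\Theta(t,z)=\theta(t,z)$ and Theorem \ref{thm_2_3}, and identification of $H(t)$ via the canonical system together with the vanishing of the reproducing kernel as $t\to t_0$ — is exactly the paper's. The one place where you genuinely diverge is the identification of the chain with de Branges spaces: you route it through the isomorphism $\mathcal{K}(\Theta)\xrightarrow{F\mapsto EF}\mathcal{H}(E)$, which the paper only states for $E\in\mathbb{HB}$ (zero-free on $\R$), and nothing in the hypotheses prevents $E(t,z)=M(z)(\tilde A(t,z)-i\tilde B(t,z))$ from having real zeros, since $M$ is only assumed zero-free on $\C_+$. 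The paper sidesteps this: Proposition \ref{prop_5_12} verifies the de Branges axioms (dB1)--(dB3) for $M(z)\mathsf{F}(\mathcal{V}_t(u))$ directly, and Theorem \ref{thm_2_2} shows its reproducing kernel equals $\bigl(\overline{A(t,z)}B(t,w)-A(t,w)\overline{B(t,z)}\bigr)/(\pi(w-\bar z))$, i.e.\ the kernel of $\mathcal{H}(E(t,z))$, so the two spaces coincide because a reproducing kernel Hilbert space is determined by its kernel. This also dissolves your worry about ``skipped subspaces'': once the generators solve \eqref{c_101} on $[t_1,t_0)$ with the continuous, determinant-one, positive-definite $H(t)$ supplied by Theorem \ref{thm_2_1} and (K7), the only remaining point is $\lim_{t\to t_0}J(t;z,w)=0$, which is exactly (K8) combined with Proposition \ref{prop_5_11} (whose part (1) covers $t_0=\infty$ by the bound $\Vert Y_z^t\Vert\le\Vert(1-\mathsf{P}_t)e_z\Vert\to 0$, in the same spirit as your $\bigcap_t\mathcal{V}_t(u)=\{0\}$ remark); no appeal to the Romanov--Woracek characterization is needed. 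Finally, the ``principal obstacle'' you flag — upgrading a.e.\ non-tangential boundary matching to removability of $\R$ — is precisely where the paper's own argument lives (Section \ref{section_5_5} together with the first part of the proof of Proposition \ref{prop_5_12}), and your extra care in deducing strictness of $|E^\sharp|<|E|$ on $\C_+$ from non-constancy of $\theta(t_1,\cdot)$ (via $\mathcal{V}_{t_1}(u)\ne\{0\}$) fills a detail the paper leaves implicit.
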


The asymptotic behavior of the reproducing kernel of $\mathcal{H}(E(t,z))$ as $t \to t_0$ 
is clarified in the proof of the theorem, but the asymptotic behavior of $E(t,z)$ as $t \to t_0$ 
is more difficult and will not be studied in this paper. 
See \cite[Theorem 41]{MR0229011} and \cite[Theorems 1.34, 3.15, 4.20]{Linghu15} 
for results on the asymptotic behavior of $E(t,z)$. 

%%%%%%%%%%%%%%%%%%%%%%%%%%%%%%%%%%%%%%%%%%%%%%%%%%%%%%%%%%%%%%%%%%%%%%%%%%%%
%
\subsection{Specialization to de Branges spaces} \label{section_2_3}
%
%%%%%%%%%%%%%%%%%%%%%%%%%%%%%%%%%%%%%%%%%%%%%%%%%%%%%%%%%%%%%%%%%%%%%%%%%%%%

Further specializes situation in the previous subsection. 
In Theorem \ref{thm_2_4}, 
there is no direct relation between $u(z)$ and $E(t,z)$, 
but if $u$ is a restriction of a meromorphic inner function 
$\Theta=E^\sharp/E$, then the chain of spaces $\mathsf{F}(\mathcal{V}_t(u))$ for $t \geq 0$ 
is isomorphic to the chain of de Branges subspaces of the de Branges space $\mathcal{H}(E)$. 
We state it after giving a result on $u=\theta$, 
which is an inner function but not necessarily a meromorphic inner function.

\begin{theorem} \label{thm_2_5}
Suppose that $u \in U_{\rm loc}^1(\R)$ 
is a restriction of an inner function $\theta$ in $\C_+$ 
and that (O1), (O2), (O3), (O4) are satisfied. 
Then 
\begin{equation} \label{c_213}
\tilde{A}(0,z) = \frac{1}{2}(1+\theta(z)), 
\qquad 
-i\tilde{B}(0,z) = \frac{1}{2}(1-\theta(z)) 
\end{equation}
for $z \in \C$. 
In particular, $\mathsf{F}(\mathcal{V}_0(u)) = \mathcal{K}(\theta)$. 
\end{theorem}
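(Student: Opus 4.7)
The plan is to exploit the sole consequence of $u$ being the restriction of an inner function $\theta \in H^\infty(\C_+)$: by the Paley--Wiener theorem for tempered distributions, the distribution $k = \mathsf{F}^{-1}\theta$ (which implements $\mathsf{K}$ via $(\mathsf{K}f)(x) = (k * \mathsf{J}_\sharp f)(x)$) is supported in $[0,\infty)$. First I would verify $\mathsf{K}[0] = 0$: if $f$ has support in $(-\infty,0]$, then $\mathsf{J}_\sharp f$ has support in $[0,\infty)$, so $k * \mathsf{J}_\sharp f$ has support in $[0,\infty)$, whence $\mathsf{P}_0 \mathsf{K}\mathsf{P}_0 = 0$. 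Consequently the defining equations for $\varphi,\psi$ force $\varphi = \psi = 0$, giving
\[
\Phi(0,\cdot) = 1 - \mathsf{K}(\mathsf{P}_0 1), \qquad \Psi(0,\cdot) = 1 + \mathsf{K}(\mathsf{P}_0 1),
\]
with $\mathsf{P}_0 1 = \mathbf{1}_{(-\infty,0)}$. Applying the same support argument to $\mathsf{K}(\mathsf{P}_0 1) = k * \mathbf{1}_{(0,\infty)}$ shows it vanishes on $(-\infty,0)$, so $(1-\mathsf{P}_0)\Phi(0,\cdot) = \mathbf{1}_{(0,\infty)} - \mathsf{K}(\mathsf{P}_0 1)$ and $(1-\mathsf{P}_0)\Psi(0,\cdot) = \mathbf{1}_{(0,\infty)} + \mathsf{K}(\mathsf{P}_0 1)$.

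I would then take Fourier transforms. The identity $(\mathsf{F}\mathsf{K}g)(z) = u(z)(\mathsf{F}g)^\sharp(z)$ combined with $(\mathsf{F}\mathbf{1}_{(-\infty,0)})^\sharp = \mathsf{F}\mathbf{1}_{(0,\infty)}$ yields $\mathsf{F}\mathsf{K}(\mathsf{P}_0 1) = \theta \cdot \mathsf{F}\mathbf{1}_{(0,\infty)}$, so
\[
\mathsf{F}((1-\mathsf{P}_0)\Phi)(z) = (1-\theta(z))\,\mathsf{F}\mathbf{1}_{(0,\infty)}(z), \quad \mathsf{F}((1-\mathsf{P}_0)\Psi)(z) = (1+\theta(z))\,\mathsf{F}\mathbf{1}_{(0,\infty)}(z).
\]
Since $\mathsf{F}\mathbf{1}_{(0,\infty)}$ extends to the holomorphic function $i/z$ on $\C_+$ (Paley--Wiener), substitution into \eqref{c_205} cancels the pole at the origin and yields $\tilde{A}(0,z) = (1+\theta(z))/2$ and $-i\tilde{B}(0,z) = (1-\theta(z))/2$ on $\C_+$; the functional equations \eqref{c_206} from Theorem \ref{thm_2_1} then extend the identities to all of $D$.

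For the final claim I would argue directly (not via Theorem \ref{thm_2_3}, whose extra hypotheses (K5)--(K7) are not assumed here). By Paley--Wiener, $\mathsf{F}L^2(0,\infty) = H^2(\C_+)$, and for $f \in L^2(0,\infty)$ with $F = \mathsf{F}f$ the boundary values satisfy $\mathsf{F}\mathsf{K}f = \theta\overline{F}$ on $\R$. Hence
\[
\mathsf{F}\mathcal{V}_0(u) = \{F \in H^2(\C_+) : \theta\overline{F} \in H^2(\C_+)|_\R\},
\]
and since $|\theta| = 1$ almost everywhere, the condition $\theta\overline{F} \perp H^2(\C_-)$ is equivalent to $F \perp \theta H^2(\C_+)$, identifying this space with $\mathcal{K}(\theta)$.

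The hard part will be the distribution-theoretic bookkeeping: making sense of $\mathsf{K}(\mathsf{P}_0 1)$ as a tempered distribution with the claimed support, and verifying that the distributional product $(1\pm\theta)\cdot\mathsf{F}\mathbf{1}_{(0,\infty)}$, once multiplied by $-iz/2$, indeed corresponds to the honest holomorphic functions $(1\pm\theta)/2$ on $\C_+$ — the pole at the origin must cancel exactly. Uniqueness of the distributional solutions $\Phi,\Psi$, needed to justify writing them in closed form, is supplied by (K4).
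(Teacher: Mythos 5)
Your proposal is correct and follows essentially the same route as the paper: $\mathsf{K}[0]=0$ from the support of $k$ (Proposition \ref{prop_6_1}), the closed forms $\Phi(0,\cdot)=1-\mathsf{K}\mathsf{P}_01$ and $\Psi(0,\cdot)=1+\mathsf{K}\mathsf{P}_01$, then a Fourier transform; the ``bookkeeping'' you defer (making rigorous sense of $\theta\cdot\mathsf{F}\mathbf{1}_{(0,\infty)}$, which involves $\delta$ and a principal value) is exactly what the paper handles by shifting the contour to $\Im z=\delta>0$ and splitting off $u(0)$ as in Proposition \ref{prop_4_1}. The only divergence is the final claim, where the paper cites Proposition \ref{prop_6_2}(4) while you give the standard direct model-space argument; both are fine.
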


From this result, it can be understood that 
Theorem \ref{thm_2_1} 
solves the direct problem for the lacunary canonical system on 
$[0,t_0)$ associated with 
the particular Hamiltonian 
$H(t)$ defined by \eqref{c_208} and \eqref{c_209} 
and equality \eqref {c_213} as the initial condition at zero 
by providing the explicit solution $(\tilde{A}(t,z), \tilde{B}(t,z))$.

If $u$ is a restriction of an inner function, $\mathsf{K}[t]=0$ for nonpositive $t$ 
by Proposition \ref{prop_6_2} below. 
Further, we have $\Phi(t,t)=\Psi(t,t)=1$ if $\Phi(t,x)$ and $\Psi(t,x)$ are continuous at $x=t$, 
and therefore $H(t)$ is the identity matrix for nonpositive $t$. 
In this sense, the nontrivial range of $t$ for $u=\theta$ is $0<t <t_0$, 
and it is actually meaningful as follows. 

For a meromorphic inner function 
$\Theta=E^\sharp/E$ with $E \in \mathbb{HB}$, 
we define $A(t,z)$ and $B(t,z)$ by \eqref{c_205} and \eqref{c_212}  with $M=E$. 
Then $E(0,z)=E(z)$ by Theorem \ref{thm_2_5}. 
Therefore, as a corollary of Theorem \ref{thm_2_4}, 
we obtain the following result 
which solves the inverse problem of finding the structure Hamiltonian 
from a given generator $E$ of a de Branges space 
(see also Proposition \ref{prop_6_2}): 

\begin{theorem} \label{thm_2_6}
Let $\Theta$ be a meromorphic inner function such that $\Theta=E^\sharp/E$ 
for some $E \in \mathbb{HB}$. Define $\mathsf{K}$ for $u=\Theta$ 
and suppose that (O2), (O3), (O4), (O6), (O7), (O8) are satisfied. 
Then $H(t)$ on $[0,t_0) $ 
defined by \eqref{c_208} and \eqref{c_209} is the structure Hamiltonian 
of the de Branges space $\mathcal{H}(E)$. 
\end{theorem}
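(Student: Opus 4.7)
The plan is to obtain Theorem~\ref{thm_2_6} directly from Theorem~\ref{thm_2_4} (with $M=E$ and $t_1=0$), once I have verified its hypotheses and then used Theorem~\ref{thm_2_5} to identify the initial datum $E(0,z)$ with the given $E$.

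First I would verify the standing hypotheses of Theorem~\ref{thm_2_4}. The factorization $u=\Theta=E^\sharp/E=M^\sharp/M$ with $M=E$ is at hand; because $E\in\mathbb{HB}$, the function $E$ is entire, holomorphic on $\C_+\cup\R$, and nonvanishing in $\C_+$. Conditions (K2), (K3), (K4), (K6), (K7), (K8) are hypotheses of Theorem~\ref{thm_2_6}, so it remains to produce (K1) and (K5). Condition (K1) is essentially automatic: since $\Theta$ is a meromorphic inner function it is in particular inner, so by Proposition~\ref{prop_6_2} one has $\mathsf{K}[t]=0$ for $t\leq 0$, and in particular $\Vert\mathsf{K}[0]\Vert_{\rm op}<1$.

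Next I would establish (K5) and identify $E(0,z)$ simultaneously by invoking Theorem~\ref{thm_2_5}. With (K1), (K2), (K3), (K4) in hand and $u$ a restriction of the inner function $\Theta$, Theorem~\ref{thm_2_5} gives
\[
\tilde{A}(0,z)=\tfrac{1}{2}(1+\Theta(z)),\qquad -i\tilde{B}(0,z)=\tfrac{1}{2}(1-\Theta(z)),
\]
and $\mathsf{F}(\mathcal{V}_0(u))=\mathcal{K}(\Theta)$. Since $E\in\mathbb{HB}$, the isometry $F\mapsto EF$ identifies $\mathcal{K}(\Theta)$ with the nonzero space $\mathcal{H}(E)$, so $\mathcal{V}_0(u)\neq\{0\}$, which is (K5). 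Multiplying the two displayed identities by $M=E$ and using the relation $\Theta\cdot E=E^\sharp$ gives $A(0,z)=(E+E^\sharp)/2$ and $-iB(0,z)=(E-E^\sharp)/2$, hence $E(0,z)=A(0,z)-iB(0,z)=E(z)$.

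With every hypothesis verified, I would then apply Theorem~\ref{thm_2_4} with $M=E$ and $t_1=0$: it yields $E(0,z)\in\overline{\mathbb{HB}}$ and asserts that $H(t)$ on $[0,t_0)$, defined by \eqref{c_208} and \eqref{c_209}, is the structure Hamiltonian of $\mathcal{H}(E(0,z))$. Combined with $E(0,z)=E(z)$ the theorem follows. The argument is essentially a bookkeeping reduction, so I do not foresee a genuine technical obstacle; the only subtle point is to ensure that (K1) truly comes for free from the inner-function hypothesis so that Theorem~\ref{thm_2_5} may be legitimately invoked inside the deduction.
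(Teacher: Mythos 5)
Your proposal is correct and follows essentially the same route as the paper: the paper also derives Theorem \ref{thm_2_6} as a corollary of Theorem \ref{thm_2_4} (with $M=E$, $t_1=0$), using Proposition \ref{prop_6_2} to get (K1) and (K5) for inner $u$ and Theorem \ref{thm_2_5} to identify $E(0,z)=E(z)$. The only detail worth adding is that $t_0>0$ (so that $t_1=0$ is admissible) follows from $\mathcal{V}_0(u)\neq\{0\}$ together with (K8).
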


Note that there are many de Branges spaces to which 
Theorem \ref{thm_2_6} does not apply, 
as in the example in Section \ref{section_3_2}.

\subsection{Comparison with Kre\u{\i}n's method} 
Here, we clarify the relationship between the method proposed 
in the present paper and 
Kre\u{\i}n's inverse spectral theory of strings on a half-line 
(\cite{Krein54_1, Krein54_2}, see also \cite{Kats94}). 
For details on the relationship between the spectral theory of strings, canonical spaces, 
and de Branges spaces, see Dym--McKean \cite{DymMcO76} and Langer--Winkler \cite{LanWin98}. 

Let $\Theta=E^\sharp/E$ be as in Theorem \ref{thm_2_6}. 
If $\Theta$ is symmetric, then 
the structure Hamiltonian $H$ of the de Branges space $\mathcal{H}(E)$ is diagonal 
and regular (limit circle) at $t=0$. 
Such a Hamiltonian is associated with 
a Kre\u{\i}n's string $S[m,L]$ consisting with its length $L$ ($0< L \leq \infty$) 
and a nondecreasing right-continuous function $m(x)$ defined on $[0,L)$, 
the mass distribution. 
From a diagonal Hamiltonian $H(t)={\rm diag}(1/\gamma(t),\gamma(t))$ on $[0,t_0)$, 
a string $S[m,L]$ is obtained by defining 
\[
x=f(t):=\int_{0}^{t} \frac{1}{\gamma(s)} \,ds, \quad 
L:= \int_{0}^{t_0} \frac{1}{\gamma(s)} \,ds, \quad 
m(x):=\int_{0}^{f^{-1}(x)} \gamma(s)\,ds. 
\]
From the obtained string, the original $H(t)$ is restored by defining 
\[
g(x):=\int_{0}^{x} \sqrt{m'(y)} \, dy, \quad 
\gamma(t):=\sqrt{m'(g^{-1}(t))}.  
\]
This correspondence is a bit more general, but we have described it in a limited situation for simplicity. 
The Titchmarsh--Weyl function $Q_S$ 
of the string  $S[m,L]$ 
is related to the Titchmarsh--Weyl function $Q_H$ of $H(t)$ 
as 
\[
z \,Q_S(z^2) = Q_H(z)\,\left(:= i \,\frac{1-\Theta(z)}{1+\Theta(z)}\right), 
\]
and the former admits the representation
\[
Q_S(z) = b + \int_{0}^{\infty} \frac{d\tau(\lambda)}{\lambda-z}, 
\]
where $b$ is a nonnegative constant and $\tau$ is a measure on $[0,\infty)$ 
called the principal spectral measure of the string $S[m, L]$.

In \cite{Krein54_1, Krein54_2}, 
Kre\u{\i}n announced the method to recover the string from the principal spectral measure. 
In the following outline, $b=0$ is assumed, 
and all assumptions on the principal spectral measure $\tau$ (or its transfer function) are omitted. 
First, we introduce the transfer function 
\begin{equation} \label{eq_20206_1}
\Phi(t)
= \int_{0}^{\infty} \frac{1-\cos(t\sqrt{\lambda})}{\lambda} \, d\tau(\lambda)
\end{equation}
on $[0,\infty)$, then consider the family of integral equations  
\begin{equation} \label{eq_20206_2}
2\Phi'(0)q(x) + \int_{-t}^{t} \Phi''(x-y) q(y) \, dy=1, \quad 0 \leq t \leq t_0. 
\end{equation}
Under appropriate conditions for $\Phi(t)$, 
this equation has unique integrable solution $x \mapsto q(x;t)$ on $[-t,t]$ 
for each $0 \leq t \leq t_0$. 
Using the solution $q(x;t)$, we set
\begin{equation} \label{eq_20206_3}
p(t) := \frac{d}{dt} \int_{-t}^{t} q(x;t) \, dx = \frac{2q(t,t)^2}{q(0,0)}.
\end{equation}
Then, we get 
$m(f(t))= \int_{0}^{t} p(s) \, ds$ with 
$f(t)  = \int_{0}^{t}1/p(s) \, ds$. 
This implies 
\begin{equation} \label{eq_20206_4}
\gamma(t) = p(t).
\end{equation}
In this way, $H(t)$ is restored from the spectral measure of a given string. 
The similarity between \eqref{eq_20206_2}, \eqref{eq_20206_3}, \eqref{eq_20206_4} 
and \eqref{c_203}, \eqref{c_204}, \eqref{c_209} 
(or, the more direct matches are (1.5), (1.6), (1.10) of \cite{Su19_3}) is remarkable. 
It is even more striking if comparing 
the Fourier transforms  
\[
\int_{0}^{\infty} \Phi(t) e^{izt} \, dt 
= -\frac{1}{z^2}\left(\frac{1-\Theta(z)}{1+\Theta(z)}\right) \quad  
\text{for large} \quad \Im z > 0
\]
and $(\mathsf{F}k)(z)=\Theta(z)$ for the kernel $k=\mathsf{F}^{-1}\Theta$ of the operator $\mathsf{K}$ 
(ignoring the twist $\mathsf{J}_\sharp$).  
In these senses, we may say that the method in the present paper is 
a generalization of a variant of Kre\u{\i}n's theory in which the integral kernel is replaced by a different type. 
If we mention the differences, 
depending on the choice of integral kernels $k$ and $\Phi$, 
the operator $\mathsf{K}$ is isometric, but $f \mapsto \int \Phi''(x-y)f(y)\,dy$ is generally not. 
Also, some technical differences occur in the proof depending on 
whether the integral kernel is additive type $k(x+y)$ or difference type $\Phi''(x-y)$. 
On the other hand, if the generator $E$ of the de Branges space $\mathcal{H}(E)$ 
and the kernel $k=\mathsf{F}^{-1}\Theta$ of the operator $\mathsf{K}$ 
are not directly related as in the example of Section 3.3, 
the relationship with Kre\u{\i}n's theory becomes indirect. 

If we only aim to recover $H(t)$ from $\tau$ (or $\Theta$), 
which method is more useful will depend on the ease of 
handling with functions  $\Phi(t)$ and $k(x)$, 
but if we also take into account the recovery of the solution 
of the canonical system, 
there is more difference between the two methods. 
In Kre\u{\i}n's theory, the solution of 
\eqref{eq_20206_2} also generates the independent solutions 
$\phi(x,\lambda)$ and $\psi(x,\lambda)$ of the differential equation 
$(p y')' + \lambda py =0$ as 
\[
\aligned 
\phi(t;\lambda) 
&= \frac{1}{p(t)} \frac{d}{dt} \int_{0}^{t} q(s;t) \cos(s\sqrt{\lambda}) \, ds, \\
\psi(t;\lambda) 
&= \frac{1}{p(t)} \frac{d}{dt} \int_{0}^{t} q(s;t)\, \omega(s,\sqrt{\lambda}) \, ds, 
\endaligned 
\]
where $
\kappa \omega(t,\kappa) 
= \sin(\kappa t) +  \int_{0}^{t} H(t-s) \sin(\kappa s) \, ds$ 
and $\phi(0;\lambda)=1$, $\phi'(0;\lambda)=0$, $\psi(0;\lambda)=0$, $\psi'(0;\lambda)=1$. 
Using these solutions, 
the solution of the canonical system associated with $H(t)$ 
is obtained as 
\[
\begin{bmatrix} A(t,z) \\ B(t,z) \end{bmatrix}
=
\begin{bmatrix}
A(z)\psi'(t,z^2) - z^{-1}B(z)\phi'(t,z^2) \\  B(z)\phi(t,z^2) -zA(z)\psi(t,z^2)
\end{bmatrix}
\]
such that $E(z)=A(0,z)-iB(0,z)$ holds, where $A=(E+E^\sharp)/2$ and $B=i(E-E^\sharp)/2$. 
As this, the solution of the canonical system given by Theorems \ref{thm_2_1} and \ref{thm_2_6} 
can also be obtained from Kre\u{\i}n's theory, 
but the formula for the solution by \eqref{c_205} and \eqref{c_212} is 
somewhat direct and  simpler. 
This difference is the same compared to the theory in \cite{KrLa85} 
that deals with diagonal and non-diagonal $H$.

As the above, in the case of diagonal $H$, 
there are both similarities and differences between our method and Kre\u{\i}n's theory for strings.  
The advantages of our method are that 
it can be generalized to non-diagonal $H$ in a different way than \cite{KrLa85}, 
and that it can deal with chains of spaces 
that are not necessarily related to canonical systems. 
\subsection{Comparison with previous work.} 

To conclude this section, 
we comment on the difference of the operator $\mathsf{K}$ 
between this paper and \cite{Su18, Su19_1, Su19_3}. 
The first difference is that $\mathsf{K}$ is antilinear  in this paper, 
which is an essential ingredient that enables us the construction of non-diagonal $H$, 
whereas, in the latter, $\mathsf{K}$ was linear. 
Second, in this paper, $\mathsf{K}$ is defined as the composition of several unitary operators 
as in \eqref{c_202}, 
whereas in the latter, $\mathsf{K}$ was defined as the integral operator 
having the integral kernel defined by 
$K(x)=\frac{1}{2\pi}\int_{\Im(z)=c}u(z)e^{-izx} dz$ for large $c>0$. 
This second difference is reflected in the conditions assumed in the results, 
and each has advantages and disadvantages. 
As an example of what makes a significant difference, 
we take up the equality $\mathsf{K}^2=1$ and the support condition of $\mathsf{K}$, 
both are important in various discussions.
In the definition of this paper, 
$\mathsf{K}^2=1$ is obvious, 
but it is nontrivial that the support of $k(x)$ is contained in $[0,\infty)$, 
which corresponds to $u$ being an inner function. 
On the other hand, in \cite{Su18, Su19_1, Su19_3}, 
it is obvious from the settings that $K(x)$ is supported in $[0,\infty)$, 
but $\mathsf{K}^2=1$ is nontrivial and it relates whether $u$ is inner. 
Besides these, if $K(x)$ is discontinuous or distribution, 
the definition in this paper is more convenient. 
In any case, two different definitions of $\mathsf{K}$ can be related as in 
\cite[Theorem 5.1]{Su19_1}.

%%%%%%%%%%%%%%%%%%%%%%%%%%%%%%%%%%%%%%%%%%%%%%%%%%%%%%%%%%%%%%%%%%%%%%%%%%%%
%
\section{Examples} \label{section_3}
%
%%%%%%%%%%%%%%%%%%%%%%%%%%%%%%%%%%%%%%%%%%%%%%%%%%%%%%%%%%%%%%%%%%%%%%%%%%%%

We provide several concrete examples of unimodular functions 
satisfying (a part of) the conditions assumed in the results in Section \ref{section_2}. 
Those examples may help readers understand the meaning or necessity of conditions (O1)--(O8).

%%%%%%%%%%%%%%%%%%%%%%%%%%%%%%%%%%%%%%%%%%%%%%%%%%%%%%%%%%%%%%%%%%%%%%%%%%%%
%
\subsection{Paley--Wiener spaces}
%
%%%%%%%%%%%%%%%%%%%%%%%%%%%%%%%%%%%%%%%%%%%%%%%%%%%%%%%%%%%%%%%%%%%%%%%%%%%%

Let $E(z)=\exp(-iaz)$ with $a>0$ 
and put $u(z)=E^\sharp(z)/E(z)=\exp(2iaz)$. 
Then $u \in U_{\rm loc}^1(\R)$. 
We have $k(x)=\delta(x-2a)$, so $(\mathsf{K} f)(x) = \overline{f(2a-x)}$, 
and therefore $\mathcal{V}_t(u) = L^2(t,\infty) \cap L^2(-\infty,2a-t)$. 
Hence (O5) is satisfied, $t_0=a$,  
$\mathcal{V}_t(u) = L^2(t,2a-t)$ for $t<a$, 
and also (O8) is satisfied: $\mathcal{V}_{a}(u)=\{0\}$.  
We have $\mathsf{K}[t]=0$ for $t <a$ from the support condition of $k(x)$, 
so (O1),  (O4) and (O6) are satisfied, and $\Phi(t,x)=\Psi(t,x)=1$ for $x<t$ if $t<a$.
Therefore, \eqref{c_203} and \eqref{c_204} 
are solved as 
$\Phi(t,x) =1 - \mathsf{K}\mathsf{P}_t1(x)= 1 - \mathbf{1}_{[0,\infty)}(x-(2a-t))$ 
and 
$\Psi(t,x) =1 + \mathsf{K}\mathsf{P}_t1(x)= 1 + \mathbf{1}_{[0,\infty)}(x-(2a-t))$. 
Hence (O2), (O3), and (O7) are satisfied with $\Phi(t,t)=\Psi(t,t)=1$, and 
\[
\aligned 
A(t,z) = \cos((a-t)z), \quad 
-iB(t,z) = -i\sin((a-t)z). 
\endaligned 
\]
On the other hand, $Y_z^t(x) = \mathbf{1}_{(t,2a-t)}(x) e^{izx}$ and 
\[
J(t;z,w) 
= \frac{\overline{E(z)}E(w)}{2\pi}\langle Y_w^t,Y_z^t \rangle 
= \frac{\sin((a-t)(w-\bar{z}))}{\pi(w-\bar{z})}.
\]
This shows that equality \eqref{c_210} holds. 
See also \cite{Su18_2}, 
where the case that $E$ is an exponential polynomial with real coefficients 
is studied, 
and explicit formulas for $\phi^\pm(t,x)$, $A(t,z)$, and $B(t,z)$ 
are stated, although $\Phi(t,x)$ and $\Psi(t,x)$ are not specified.   

%%%%%%%%%%%%%%%%%%%%%%%%%%%%%%%%%%%%%%%%%%%%%%%%%%%%%%%%%%%%%%%%%%%%%%%%%%%%
%
\subsection{One dimensional de Branges space} \label{section_3_2}
%
%%%%%%%%%%%%%%%%%%%%%%%%%%%%%%%%%%%%%%%%%%%%%%%%%%%%%%%%%%%%%%%%%%%%%%%%%%%%

Let $E(z)=1-iz$ and put $u(z)=E^\sharp(z)/E(z)=(1+iz)/(1-iz)$. 
Then $u \in U_{\rm loc}^1(\R)$. 
We have $k(x)=-\delta(x)+2e^{-x}\mathbf{1}_{(0,\infty)}(x)$ 
and easily find that 
$\mathcal{V}_t(u) = \{0\}$ for $t>0$, 
$\mathcal{V}_0(u) = \C\, e^{-x} \mathbf{1}_{(0,\infty)}$, 
and 
$\mathcal{V}_t(u) = L^2(t,-t)+\C\, e^{-x} \mathbf{1}_{(-t,\infty)}$ for $t<0$. 
Hence (O5) is satisfied and $t_0=0$, 
but (O8) is {\it not} satisfied.  
We have $\mathsf{K}[t]=0$ for $t <0$, 
so (O1),  (O4), and (O6) are satisfied, and $\Phi(t,x)=\Psi(t,x)=1$ for $x<t$ if $t<0$.
Therefore, \eqref{c_203} and \eqref{c_204} 
are solved as 
$\Phi(t,x) =1 - \mathsf{K}\mathsf{P}_t1(x)= 1- \mathbf{1}_{(-t,\infty)}(x) (1-2 e^{-x-t} )$ 
and 
$\Psi(t,x) =1 + \mathsf{K}\mathsf{P}_t1(x)= 1+ \mathbf{1}_{(-t,\infty)}(x) (1-2 e^{-x-t} )$. 
Hence (O2), (O3), and (O7) are satisfied with $\Phi(t,t)=\Psi(t,t)=1$, and 
\[
A(t,z) 
= \cos(tz)+z\sin(tz), \quad 
-iB(t,z) 
= -i(z\cos(tz)-\sin(tz)).  
\]
On the other hand, we have 
\[
Y_z^t(x) = \frac{2i}{z+i}e^{-it(z-i)} \cdot e^{-x} \mathbf{1}_{(-t,\infty)}(x)
+ e^{izx}\mathbf{1}_{(t,-t)}(x), 
\] 
and 
\[
J(t;z,w) 
= \frac{\overline{E(z)}E(w)}{2\pi}\langle Y_w^t,Y_z^t \rangle 
= \frac{e^{-it(w-\bar{z})}}{\pi} 
- (w+i)(\bar{z}-i)\frac{\sin(t(w-\bar{z}))}{\pi(w-\bar{z})}
\]
for $t \leq 0$. We can check that equality \eqref{c_210} holds. 

For $0 < t <1$, we define $A(t,z)=1$ and $B(t,z)=z(1-t)$. Then 
\[ 
-\frac{d}{dt}
\begin{bmatrix}
A(t,z) \\ B(t,z)
\end{bmatrix}
= z 
\begin{bmatrix}
0 & -1 \\ 1 & 0
\end{bmatrix}
\begin{bmatrix}
1 & 0 \\ 0 & 0
\end{bmatrix}
\begin{bmatrix}
A(t,z) \\ B(t,z) 
\end{bmatrix}, \quad 0 < t < 1, \quad z \in \C,  
\]
$J(t;z,w)= (1/\pi)(1-t)$, and hence $J(t;z,w) \to 0$ as $t \to 1$. 
In other words, the structure Hamiltonian of $\mathcal{H}(E)$ 
is $H(t)={\rm diag}(1,0)$ on $t \in (0,1)$, 
but it cannot be obtained from the method in Section \ref{section_2}.

%%%%%%%%%%%%%%%%%%%%%%%%%%%%%%%%%%%%%%%%%%%%%%%%%%%%%%%%%%%%%%%%%%%%%%%%%%%%%%%%%%%%%%%%
%
\subsection{De Branges spaces related to Hankel transform of order zero}
%
%%%%%%%%%%%%%%%%%%%%%%%%%%%%%%%%%%%%%%%%%%%%%%%%%%%%%%%%%%%%%%%%%%%%%%%%%%%%%%%%%%%%%%%%

The example described here is based on the results of Burnol \cite{Burnol2011}. 
The section numbers in this part refers to that in \cite{Burnol2011}. 
Let $M(z)=\Gamma(\tfrac{1}{2}-iz)$ and put 
$u(z)=M^\sharp(z)/M(z)=\Gamma(\tfrac{1}{2}+iz)/\Gamma(\tfrac{1}{2}-iz)$. 
Then $u \in U_{\rm loc}^1(\R)$ and $u$ is meromorphic on $\C$ but not inner in $\C_+$. 
We have 
$k(x)=e^{x/2} J_0(2e^{x/2})$, and 
$u(z)= \int_{-\infty}^{\infty} k(x)  e^{izx} \, dx$ 
for $-1/4 < \Im(z) <1/2$, 
where 
$J_0(z)$ is the Bessel function of the first kind of order $0$ 
and the convergence of the Fourier integral is conditional if $\Im(z)\leq 1/4$. 
It is proved that  (O1) and (O5) are satisfied for all $t \in \R$ in \S5, 
and hence (O6) and (O8) are also satisfied. 
We put $a=e^t$, $b=e^x$, and 
\[
\phi^\pm(t,x) = \sqrt{ab} \left( 1\pm \frac{\partial}{\partial b} \right) I_0(2\sqrt{a(a-b)}),  
\]
where $I_0(w)$ is the modified Bessel function of the first kind of index zero. 
Then $\phi^\pm(t,x)$ are real-valued, 
and they satisfy \eqref{c_501} and \eqref{c_502} below by discussions in \S7.
Define
\[
\aligned 
\Phi(t,x) 
= 
1 - e^{-2e^t}
\int_{-\infty}^{x} \phi^-(t,y) dy, 
\quad 
\Psi(t,x) 
= 
1 + e^{2e^t}
\int_{-\infty}^{x} \phi^+(t,y) dy. 
\endaligned 
\]
Then they satisfy \eqref{c_203} and \eqref{c_204}, 
and it is easily find that (O2) is satisfied. 
Also, the equalities $\Phi(t,t) = e^{-2e^t}$ and $\Psi(t,t) = e^{2e^t}$ 
show that (O3) and (O6) are satisfied. 
The asymptotic formula $(\mathsf{K}\delta_t)(x)=k(x+t)\sim \exp(x/2)$ as $x \to -\infty$ 
shows that (O4) is satisfied. 
Further, we have 
\[
\aligned 
A(t,z) &= e^{2e^t+(t/2)}(K_{\frac{1}{2}-iz}(2e^t) + K_{\frac{1}{2}+iz}(2e^t)), \\
-iB(t,z) &= e^{-2e^t+(t/2)}(K_{\frac{1}{2}-iz}(2e^t) - K_{\frac{1}{2}+iz}(2e^t)), 
\endaligned 
\]
where $K_\nu(z)$ is the modified Bessel function of the second kind of index $\nu$. 
The explicit formula for  $Y_z^t$ cannot be found in \cite{Burnol2011}, 
and in fact, that is complicated to write down here,  
but the explicit formula for $\langle Y_w^t, Y_z^t \rangle$ 
can be found in \S7. For $z=0$, we have $Y_0^t(x) =\mathbf{1}_{(t,\infty)}(x)\Phi(t,x)$, 
which is implicitly dealt with in \S6.  
Anyway, we find the explicit formula for $J(t;z,w)$ 
via the second equality of \eqref{c_210}. 

%%%%%%%%%%%%%%%%%%%%%%%%%%%%%%%%%%%%%%%%%%%%%%%%%%%%%%%%%%%%%%%%%%%%%%%%%%%%
%
\subsection{De Branges spaces arising from $L$-functions in the Selberg class}
%
%%%%%%%%%%%%%%%%%%%%%%%%%%%%%%%%%%%%%%%%%%%%%%%%%%%%%%%%%%%%%%%%%%%%%%%%%%%%

The examples described here are based on the results of \cite{Su19_2}. 
Let $\mathcal{S}$ be the Selberg class of Dirichlet series $L(s)$. 
Typical examples of elements in $\mathcal{S}$ 
are number-theoretic zeta- and $L$-functions such as the Riemann zeta-function 
and Dirichlet $L$-functions. 
For every $L \in \mathcal{S}$, there exists 
a product $\gamma_L(s)$ of shifts of the $\Gamma$-function, 
a real number $Q_L$ and a complex number $c_L$ of the unit modulus 
such that $\xi_L(s):=c_L\,Q_L^{s}\gamma_L(s)L(s)$ satisfies 
the functional equation $\xi_L(s)=\xi_L^\sharp(1-s)$. We define 
\[
\Theta_L^{\omega,\nu}(z)
= \left(
\frac{\xi_L(1/2-\omega-iz)}{\xi_L(1/2+\omega-iz)}
\right)^\nu
\]
for $L \in \mathcal{S}$, $\omega \in \R_{>0}$, and $\nu \in \Z_{>0}$. 
Then $\Theta_L^{\omega,\nu}$ is a meromorphic inner function 
for all $\omega \geq 1/2$ and $\nu \in \Z_{>0}$ unconditionally 
and for all $0<\omega<1/2$  and $\nu \in \Z_{>0}$ 
if assuming the Grand Riemann Hypothesis (GRH) for $L$ 
(\cite[Proposition 2.2]{Su19_2}). 

We suppose that $\omega\nu d_L>1$, where $d_L$ is the degree of $L$, and 
that $\Theta_L^{\omega,\nu}$ is a meromorphic inner function.  
Then, by \cite[Theorem 5.1]{Su19_1} and \cite[Proposition 4.1]{Su19_2}, 
$\mathsf{K}$ defined by \eqref{c_202} for $u=\Theta_L^{\omega,\nu}$
equals to the integral operator having the continuous kernel 
$K(x+y)$ defined by $K(x)=\frac{1}{2\pi}\int_{\Im(z)=c} \Theta_L^{\omega,\nu}(z) e^{-izx}\,dz$ 
for $c>1/2+\omega$. 
Further,  (O2), (O3), (O4) are satisfied by 
\cite[Section 2.4]{Su19_1}, \cite[Proposition 4.1]{Su19_2}, and 
\cite[Proposition 2.3]{Su19_3}. 
And (O6) is satisfied by a similar argument as in the proof of 
\cite[Lemma 5.2, Proposition 5.1]{Su19_2}. 
Since Proposition \ref{prop_7_4} below can be applied, 
$\Phi(t,t)=\Psi(t,t)=1$ for $t<0$, 
and $\Phi(t,t)$, $\Psi(t,t)$ are continuous for $t \geq 0$. 
Therefore, (O7) is satisfied. 
Finally, since $t_0=\infty$ by 
\cite[Lemma 4.1]{Su19_1} and \cite[Proposition 4.1]{Su19_2}, 
(O8) is also satisfied. 
Hence, Theorem \ref{thm_2_6} can be applied to $u=\Theta_L^{\omega,\nu}$. 
Note that \cite{Su19_2} deals only with the case that 
$L(s)$ and $Q_L^{s}\gamma_L(s)L(s)$ take real-values on the real line, 
but essentially does not affect the discussions  
that prove the results referred to the above, 
and lead to the same results for general $L \in \mathcal{S}$. 

On the other hand, we may also generalize 
the unimodular function 
\[
u(z)=\exp\left[-2\eta \frac{\xi'}{\xi}\left(\frac{1}{2}-iz\right)\right]
\]
studied in \cite{Su18} to functions in the Selberg class, 
where $\xi$ is the Riemann xi-function and $\eta$ is a positive real number. 

%%%%%%%%%%%%%%%%%%%%%%%%%%%%%%%%%%%%%%%%%%%%%%%%%%%%%%%%%%%%%%%%%%%%%%%%%%%%%%%%%%%%%%%%
%
\section{Proof of Theorem \ref{thm_2_1}} \label{section_4}
%
%%%%%%%%%%%%%%%%%%%%%%%%%%%%%%%%%%%%%%%%%%%%%%%%%%%%%%%%%%%%%%%%%%%%%%%%%%%%%%%%%%%%%%%%

%%%%%%%%%%%%%%%%%%%%%%%%%%%%%%%%%%%%%%%%%%%%%%%%%%%%%%%%%%%%%%%%%%%%%%%%%%%%%%%%%%%%%%%%
%
\subsection{Extension of $\mathsf{K}$ to $S'(\R)$} \label{section_4_1}
%
%%%%%%%%%%%%%%%%%%%%%%%%%%%%%%%%%%%%%%%%%%%%%%%%%%%%%%%%%%%%%%%%%%%%%%%%%%%%%%%%%%%%%%%%

Let $(f,g) := \int f(x)g(x) \,dx$ be the pairing for $f \in S'(\R)$ and $g \in S(\R)$. 
Then the Fourier transform $\mathsf{F}$, 
the multiplication operator $\mathsf{M}_u$ for $u \in L^\infty(\R)$, 
and $\mathsf{J}^\sharp$ extend to $S'(\R)$ by 
$(\mathsf{F}f,g):=(f,\mathsf{F}g)$, 
$(\mathsf{M}_uf,g):=(f,\mathsf{M}_ug)$, and 
$(\mathsf{J}^\sharp f,g) = \overline{(f,\mathsf{J}^\sharp g)}$ 
for $f \in S'(\R)$ and $g \in S(\R)$, 
since $S(\R)$ is closed under these operations. 
Therefore, $\mathsf{K}$ extends to 
an antilinear involution on $S'(\R)$ by \eqref{c_202}. 

We also use the Hermitian pairing $\langle f, g \rangle:= (f,\bar{g}) = \overline{ (\bar{f}, g) }$ 
for $f \in S'(\R)$ and $g \in S(\R)$. 
Then, $\mathsf{K}$ is self-adjoint with respect to this Hermitian paring. 
In fact, we have $\overline{\mathsf{F}^{-1}h_1}
= (2\pi)^{-1}\mathsf{F}\mathsf{J}^\sharp h_1$, 
$\mathsf{J}^\sharp\mathsf{F}h_2= 2\pi \mathsf{F}^{-1} \overline{h_2}$, 
and therefore 
\[
\langle f, \mathsf{K}g \rangle 
= (f, \mathsf{F}\mathsf{J}^\sharp\mathsf{M}_u \mathsf{F}^{-1} \bar{g} )
= \overline{ (\mathsf{F}^{-1}\mathsf{M}_u\mathsf{J}^\sharp\mathsf{F}f,   \bar{g} ) } 
= \overline{ \langle \mathsf{K}f,   g \rangle }. 
\]

\begin{proposition} \label{prop_4_1}
Let $u \in U_{\rm loc}^1(\R)$.  
Then $\mathsf{K}\mathsf{P}_t 1 \in L_{\rm loc}^1(\R)$ and $\mathsf{K}[t]1 \in L^2(-\infty,t)$.  
\end{proposition}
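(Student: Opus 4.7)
The plan is to work on the Fourier side using the extension of $\mathsf{K}$ to $S'(\R)$ from Section~\ref{section_4_1}, since $\mathsf{P}_t 1 = \mathbf{1}_{(-\infty,t)}$ lies in $S'(\R) \setminus L^2(\R)$. The goal is to show that $\mathsf{K}\mathsf{P}_t 1$ decomposes as an explicit bounded step function plus an $L^2(\R)$ correction, which is more than enough for both conclusions.

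First, I would compute $\mathsf{F}(\mathsf{P}_t 1)$ as a tempered distribution by Plemelj regularization from $\Im z > 0$, obtaining
\[
(\mathsf{F}\mathbf{1}_{(-\infty,t)})(z) = \pi\,\delta(z) - i\,\mathrm{PV}\!\left(\frac{e^{izt}}{z}\right).
\]
Applying $\mathsf{J}^\sharp$ conjugates the distribution and flips the sign of $t$ in the exponent; then multiplying by $u$, and using condition (U1) to justify $u\cdot\delta = u(0)\delta$, gives
\[
\mathsf{F}(\mathsf{K}\mathsf{P}_t 1)(z) = \pi\, u(0)\,\delta(z) + i\, u(z)\,\mathrm{PV}\!\left(\frac{e^{-izt}}{z}\right).
\]

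Next, I would split $u(z) = u(0) + (u(z)-u(0))$. The $u(0)$ piece is inverted via $\mathsf{F}^{-1}(\mathrm{PV}(1/z))(x) = -\tfrac{i}{2}\mathrm{sgn}(x)$ combined with the translation formula; together with the $\delta$ contribution, this produces the bounded step function $u(0)\,\mathbf{1}_{(-t,\infty)}(x)$. For the residual term $i\,(u(z)-u(0))e^{-izt}/z$, the Hölder condition (U1) gives $|(u(z)-u(0))/z| \lesssim |z|^{\alpha-1}$ near $0$, which is square-integrable precisely because $2(\alpha-1)>-1$, i.e., $\alpha>1/2$; combined with the bound $2/|z|$ at infinity arising from $|u|\equiv 1$, this shows $(u-u(0))/z \in L^2(\R)$. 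Multiplication by unimodular $e^{-izt}$ preserves $L^2(\R)$, so Plancherel produces some $g \in L^2(\R)$ as its inverse Fourier transform.

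Combining yields $\mathsf{K}\mathsf{P}_t 1 = u(0)\,\mathbf{1}_{(-t,\infty)} + g$ with $g\in L^2(\R)$, so $\mathsf{K}\mathsf{P}_t 1 \in L_{\rm loc}^1(\R)$. For the second claim, multiplication by $\mathbf{1}_{(-\infty,t)}$ turns the step function into $u(0)\,\mathbf{1}_{(-t,t)}$, which is empty for $t \leq 0$ and otherwise has compact support, hence lies in $L^2$; the function $g$ restricted to $(-\infty,t)$ is still in $L^2$. Therefore $\mathsf{K}[t]1 \in L^2(-\infty,t)$. The main obstacle is a careful handling of the distributional product $u(z)\cdot\mathrm{PV}(e^{-izt}/z)$ when $u$ is merely $L^\infty$ and Hölder at $0$: the condition $\alpha > 1/2$ is used in exactly the minimal way needed to ensure the corrective factor $(u(z)-u(0))/z$ is square-integrable near zero, so that the product splits cleanly into an explicit PV distribution plus an ordinary $L^2$ function.
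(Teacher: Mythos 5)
Your proposal is correct and follows essentially the same route as the paper: both split $u=u(0)+(u-u(0))$ on the Fourier side, let the $u(0)$ part act on $\mathsf{F}^{-1}\mathsf{P}_t1$ (delta plus principal value) to produce the step function $u(0)\mathbf{1}_{(-t,\infty)}$, and use the H\"older exponent $\alpha>1/2$ to make $(u(z)-u(0))/z$ square-integrable near $0$ so the remainder is an $L^2(\R)$ function. The only cosmetic difference is that the paper carries out the computation by pairing against Schwartz test functions, whereas you manipulate the tempered distributions directly; the resulting decomposition agrees with the paper's \eqref{c_401}.
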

\begin{proof} We calculate $\mathsf{K}\mathsf{P}_t 1$ as a tempered distribution. 
For $g \in S(\R)$, 
\[
\aligned 
\langle \mathsf{K}\mathsf{P}_t1,g \rangle 
& = \langle \mathsf{K}g, \mathsf{P}_t1 \rangle  
  = ( \mathsf{F}^{-1}\mathsf{M}_u\mathsf{J}^\sharp\mathsf{F}g, \mathsf{P}_t1 ) 
  = ( \mathsf{M}_u\mathsf{J}^\sharp\mathsf{F}g, \mathsf{F}^{-1}\mathsf{P}_t1 ) \\
& = ( (\mathsf{M}_u-u(0))\mathsf{J}^\sharp\mathsf{F}g, \mathsf{F}^{-1}\mathsf{P}_t1 ) 
+u(0) ( \mathsf{F}^{-1}\mathsf{J}^\sharp\mathsf{F}g, \mathsf{P}_t1 ). 
\endaligned 
\]
In the second term of the right-hand side, 
$\mathsf{F}^{-1}\mathsf{J}^\sharp\mathsf{F}g 
= \mathsf{F}^{-1}\mathsf{F}\mathsf{J}_\sharp g = \mathsf{J}_\sharp g$ 
by $\mathsf{J}^\sharp\mathsf{F}=\mathsf{F}\mathsf{J}_\sharp$. 
Thus 
\[
(\mathsf{F}^{-1}\mathsf{J}^\sharp\mathsf{F}g, \mathsf{P}_t1)
= (\mathsf{J}_\sharp g, \mathsf{P}_t1)
= \int \overline{g(-x)} \mathsf{P}_t1(x) \, dx
= \langle (1-\mathsf{P}_{-t})1,g \rangle. 
\]
To calculate the first term of the right-hand side, we recall 
the Fourier transform of unit step functions
\[
\mathsf{F}^{-1}\mathsf{P}_t1(z)
= \frac{e^{-itz}}{2} \left( \delta(z) - \frac{1}{\pi i}  \,{\rm p.v.}\frac{1}{z}\right), 
\]
where the distribution ${\rm p.v.}(1/x)$ is defined by 
\[
\left( {\rm p.v.}\frac{1}{x}, g(x)\right) = \lim_{\epsilon \to 0} \int_{|x|\geq \epsilon} \frac{g(x)}{x} \, dx
\] 
for $g \in S(\R)$. 
Therefore, 
\[
\aligned 
( (\mathsf{M}_u & -u(0)) \mathsf{J}^\sharp\mathsf{F}g, \mathsf{F}^{-1} \mathsf{P}_t1 ) \\
& =  \left( (u(z)-u(0)) \int \overline{g(-x)} e^{izx} \, dx, ~ 
-\frac{1}{2\pi i} e^{-itz} {\rm p.v.}\frac{1}{z} 
  \right) \\
&= \frac{1}{2\pi} 
\lim_{\epsilon \to 0}  
\int_{|z|>\epsilon}\left( \frac{u(z)-u(0)}{-iz} \int \overline{g(-x)} e^{izx} \, dx 
  \right) e^{-itz} \, dz \\
&=  
\int
\left( \frac{1}{2\pi} 
\int \left[ \frac{u(z)-u(0)}{-iz} e^{-itz} \right] e^{-izx} \, dz
  \right)  \overline{g(x)}  \, dx \\
& = \int h(x)\overline{g(x)} \, dx 
= \langle h, g \rangle, 
\endaligned 
\]
where 
\[
h(x):= \frac{1}{2\pi} 
\int \left[ \frac{u(z)-u(0)}{-iz} e^{-itz} \right] e^{-izx} \, dz \in L^2(\R)
\]
and the integral converges in the $L^2$ sense.  
(H\"{o}lder continuity of $u$ at $z=0$ is used here.)  
The above calculation is justified by the Cauchy--Schwarz inequality 
and Fubini's theorem.  Hence, 
\begin{equation} \label{c_401}
\mathsf{K}\mathsf{P}_t1
= u(0)(1-\mathsf{P}_{-t})1 + h, \quad h \in L^2(\R). 
\end{equation}
This shows the desired results, since $L^2(\R) \subset L_{\rm loc}^1(\R)$.  
\end{proof}

%%%%%%%%%%%%%%%%%%%%%%%%%%%%%%%%%%%%%%%%%%%%%%%%%%%%%%%%%%%%%%%%%%%%%%%%%%%%%%%%%%%%%%%%
%
\subsection{Proof of (1), (2), (3), (4)}
%
%%%%%%%%%%%%%%%%%%%%%%%%%%%%%%%%%%%%%%%%%%%%%%%%%%%%%%%%%%%%%%%%%%%%%%%%%%%%%%%%%%%%%%%%

Recall the solutions $\Phi = 1 - \mathsf{K}\varphi - \mathsf{K}\mathsf{P}_t1$ 
and $\Psi = 1 + \mathsf{K}\psi + \mathsf{K}\mathsf{P}_t1$ 
of \eqref{c_203} and \eqref{c_204} introduced in Section \ref{section_2_1}, 
where $\varphi=-(1+\mathsf{K}[t])^{-1}\mathsf{K}[t]1$ 
and  $\psi=(1-\mathsf{K}[t])^{-1}\mathsf{K}[t]1$. 
It is not hard to see $\varphi=\mathsf{P}_t\Phi-\mathsf{P}_t1$ 
and  $\psi=\mathsf{P}_t\Psi-\mathsf{P}_t1$. 
By Proposition \ref{prop_4_1}, 
$\Phi$ and $\Psi$ are tempered distributions at least. 
Therefore,  
$\mathsf{F}(1-\mathsf{P}_t)\Phi$ 
and  
$\mathsf{F}(1-\mathsf{P}_t)\Psi$ 
are always defined as tempered distributions, thus (1) is proved. 

To prove (2) and (3), we show that 
$\mathsf{F}(1-\mathsf{P}_t)\Phi$ 
and 
$\mathsf{F}(1-\mathsf{P}_t)\Psi$ 
are defined as holomophic functions on $\C_+$. 
First, $(\mathsf{F}(1-\mathsf{P}_t)1)(z)=e^{itz}/(-iz)$ for $\Im(z)>0$. 
Second, $\mathsf{F}(1-\mathsf{P}_t)\mathsf{K}\psi $ 
is defined for $\Im(z) > 0$, since $(1-\mathsf{P}_t)\mathsf{K}\psi \in L^2(t,\infty)$. 
Third, 
\[
(\mathsf{F}(1-\mathsf{P}_t)\mathsf{K}\mathsf{P}_t1)(z)
= u(0)\frac{e^{i|t|z}}{-iz} + (\mathsf{F}(1-\mathsf{P}_t)h)(z), \quad h \in L^2(\R)
\] 
for $\Im(z) > 0$ by \eqref{c_401}. 
Hence $\tilde{A}(t,z)$ and $\tilde{B}(t,z)$ are defined by \eqref{c_205} 
and holomorphic on $\C_+$. 
Moreover, $\lim_{z \to x}\tilde{A}(t,z)=\tilde{A}(t,x)$ 
and $\lim_{z \to x}\tilde{B}(t,z)=\tilde{B}(t,x)$ 
hold for almost all $x \in \R$, 
where $z$ tends to $x$ non-tangentially in $\C_+$, 
since $\lim_{z \to x} \mathsf{F}f(z) = \mathsf{F}f(x)$ 
for $f \in L^2(t,\infty)$ for almost all $x \in \R$. 

For (4), we extend $\tilde{A}(t,z)$ and $\tilde{B}(t,z)$ across the real line as follows.   
First, we observe that $\mathsf{K}1 = u(0)$ as a tempered distribution, 
because  
\[
\aligned 
\langle \mathsf{K}1, g \rangle
& = \langle  \mathsf{K}g, 1 \rangle 
 = \langle \mathsf{F}^{-1}\mathsf{F} \mathsf{K}g, 1 \rangle 
 =  (\mathsf{F}^{-1}\mathsf{F} \mathsf{K}g, 1 )
=  (\mathsf{F} \mathsf{K}g, \mathsf{F}^{-1}1 ) \\
& =  (\mathsf{F} \mathsf{K}g, \delta )
= \ \mathsf{F} \mathsf{K}g(0) 
= u(0)(\mathsf{F}g)^\sharp(0) 
= u(0)\overline{\mathsf{F}g(0)} 
= u(0) \langle 1, g \rangle
\endaligned 
\]
for $g \in S(\R)$. 
On the other hand, 
$(1-\mathsf{P}_t)\Psi 
= 1 - \mathsf{P}_t \Psi  + \mathsf{K}\mathsf{P}_t\Psi $ 
by \eqref{c_204}. 
Therefore,  
\[
\mathsf{K}(1-\mathsf{P}_t)\Psi 
= \mathsf{K}1 - \mathsf{K}\mathsf{P}_t \Psi  + \mathsf{P}_t\Psi 
= (u(0) + 1) -  (1-\mathsf{P}_t)\Psi .
\]
Using this and $(\mathsf{F}\frac{\partial}{\partial x} f)(z)=(-iz)(\mathsf{F}f)(z)$, 
we have 
\[
\aligned 
2 \tilde{A}(t,z) 
& = (-iz)(\mathsf{F}(1-\mathsf{P}_t)\Psi)(z) 
  = (\mathsf{F}\tfrac{\partial}{\partial x}(1-\mathsf{P}_t)\Psi)(z) \\
& = - (\mathsf{F}\tfrac{\partial}{\partial x} \mathsf{K}(1-\mathsf{P}_t)\Psi)(z) 
 = (\mathsf{F}\mathsf{K}\tfrac{\partial}{\partial x}(1-\mathsf{P}_t)\Psi)(z) \\
&= u(z) (J^\sharp \mathsf{F}\tfrac{\partial}{\partial x}(1-\mathsf{P}_t)\Psi)(z)
= 2\,u(z)\tilde{A}^\sharp(t,z)
\endaligned 
\]
for $z \in \R$. On the right-hand side, 
\[
\aligned 
2 \tilde{A}^\sharp(t,z)
& = (\mathsf{J}^\sharp\mathsf{F}\tfrac{\partial}{\partial x}(1-\mathsf{P}_t)\Psi)(z)
= (\mathsf{F}\tfrac{\partial}{\partial x}\mathsf{J}_\sharp(1-\mathsf{P}_t)\Psi)(z) \\
& = (\mathsf{F}\tfrac{\partial}{\partial x}\mathsf{P}_{-t}\Psi)(z) 
= (-iz)(\mathsf{F}\mathsf{P}_{-t}\mathsf{J}_\sharp\Psi)(z). 
\endaligned 
\]
Here 
$(\mathsf{F}\mathsf{P}_{-t}\mathsf{J}_\sharp \Psi)(z)$ 
on the right-hand side is defined for $\Im(z) \leq 0$ 
by $\Psi = 1 + \mathsf{K}\psi + \mathsf{K}\mathsf{P}_t1$ and  \eqref{c_401}. 
Hence $\tilde{A}(t,z)$ extends 
from $\C_+ \cup \R$ to $\C_+ \cup D$, and \eqref{c_206} holds in $D$. 
Analytic continuation and functional equation for $\tilde{B}(t,z)$ 
are proved in a similar argument. 
\medskip

If the (distribution) kernel $k=\mathsf{F}^{-1}u$ of $\mathsf{K}$ has support in $[0,\infty)$, 
we easily find that $\varphi$ and  $\psi$ have support in $[-t,t]$ or zero 
(cf. Section \ref{section_6_1}), 
and thus $\mathsf{F}\varphi$ and $\mathsf{F}\psi$ are entire functions, 
since a tempered distribution is a higher derivative of a continuous function. 
If the analytic continuations for $\mathsf{F}\varphi$ and $\mathsf{F}\psi$ beyond $\C_+ \cup \R$ 
are easily proved like these, 
the analytic continuations for $\tilde{A}(t,z)$ and $\tilde{B}(t,z)$ are also easily proved as follows.
We have 
$(1-\mathsf{P}_t)\Psi 
= (1-\mathsf{P}_t)1 - \psi
+ \mathsf{K}\mathsf{P}_t1
+ \mathsf{K}\psi$ 
with 
$\psi=(\mathsf{P}_t\Psi - \mathsf{P}_t 1)$ 
by \eqref{c_204}. 
Hence 
\[
(-iz)(\mathsf{F}(1-\mathsf{P}_t)\Psi )(z)
 = \Bigl[ e^{itz} + iz (\mathsf{F} \psi)(z) \Bigr]
 + u(z) \Bigl[
e^{itz}
+ iz
(\mathsf{F}\psi)(z)
\Bigr]^\sharp
\]
for $z \in \C_+ \cup \R$. This formula gives the analytic continuation of 
$\tilde{A}(t,z)$ according to the extended domain of $(\mathsf{F}\psi)(z)$ 
(and the domain of $u$). \hfill $\Box$

%%%%%%%%%%%%%%%%%%%%%%%%%%%%%%%%%%%%%%%%%%%%%%%%%%%%%%%%%%%%%%%%%%%%%%%%%%%%%%%%%%%%%%%%
%
\subsection{Proof of (5)}
%
%%%%%%%%%%%%%%%%%%%%%%%%%%%%%%%%%%%%%%%%%%%%%%%%%%%%%%%%%%%%%%%%%%%%%%%%%%%%%%%%%%%%%%%%

For $t<s$, we have 
\[
\Phi(t,x)-\Phi(s,x) = \mathsf{K}(\mathsf{P}_s-\mathsf{P}_t) 1 
+ \mathsf{K}\Bigl[ 
(1+\mathsf{K}[t])^{-1}\mathsf{K}[t]1 
-
(1+\mathsf{K}[s])^{-1}\mathsf{K}[s]1 
\Bigr]
\]
by $\Phi(t,x) = 1 - \mathsf{K}\mathsf{P}_t 1 + \mathsf{K}(1+\mathsf{K}[t])^{-1}\mathsf{K}[t]1$ 
(see the lines before \eqref{c_203}). 
The first term on the right-hand side tends to zero as $s \to t$ in $L^2(\R)$, 
since $\mathsf{K}$ is isometric. 
We find that the second term on the right-hand side 
also tends to zero as $s \to t$ in $L^2(\R)$ 
by the second resolvent equation 
\[
(1+\mathsf{K}[t])^{-1}\mathsf{K}[t]
 -
(1+\mathsf{K}[s])^{-1}\mathsf{K}[s] 
 = 
(1+\mathsf{K}[t])^{-1}
(\mathsf{K}[t]-\mathsf{K}[s])
(1+\mathsf{K}[s])^{-1}.
\]
Therefore, 
$\Vert \Phi(t,x) - \Phi(s,x) \Vert_{L^2(\R)} \to 0$ as $s \to t$. 
The same is true for $\Psi(t,x)$. Therefore, 
$\Vert z^{-1}(\tilde{A}(t,z) - \tilde{A}(s,z)) \Vert_{H^2(\C_+)} \to 0$ 
and 
$\Vert z^{-1}(\tilde{B}(t,z) - \tilde{B}(s,z)) \Vert_{H^2(\C_+)} \to 0$ 
as $s \to t$ by definition \eqref{c_205}.  
The latter implies that $\tilde{A}(t,z) - \tilde{A}(s,z) \to 0$ and $\tilde{B}(t,z) - \tilde{B}(s,z) \to 0$ 
as $s \to t$ pointwisely, since the norm convergence in the reproducing kernel Hilbert space 
$H^2(\C_+)$ implies the pointwise convergence. \hfill $\Box$

%%%%%%%%%%%%%%%%%%%%%%%%%%%%%%%%%%%%%%%%%%%%%%%%%%%%%%%%%%%%%%%%%%%%%%%%%%%%%%%%%%%%%%%%
%
\subsection{Auxiliary results necessary to prove (6) and (7)}
%
%%%%%%%%%%%%%%%%%%%%%%%%%%%%%%%%%%%%%%%%%%%%%%%%%%%%%%%%%%%%%%%%%%%%%%%%%%%%%%%%%%%%%%%%

\begin{lemma} \label{lem_4_2}
Let $\Phi$ and $\Psi$ be nonzero complex numbers. Then the pair of equations 
\[
\left\{
\aligned 
\Psi &= -i \beta \Psi  + \gamma \Phi, \\ 
\Phi &= \alpha \Psi  + i \beta \Phi
\endaligned 
\right.
\]
for real numbers $\alpha$, $\beta$, $\gamma$ has a unique solution 
\begin{equation} \label{c_402}
\aligned 
\alpha
& = \frac{|\Phi|^2}{\Re(\Phi\overline{\Psi})}
= \frac{1}{\Re(\Psi/\Phi)}, \quad 
\gamma
= \frac{|\Psi|^2}{\Re(\Phi\overline{\Psi})}
= \frac{1}{\Re(\Phi/\Psi)}, \\
& \qquad \beta
= \frac {\Im(\Phi\overline{\Psi})}{\Re(\Phi\overline{\Psi})}
= \frac {\Im(\Phi/\Psi)}{\Re(\Phi/\Psi)}
= -\frac {\Im(\Psi/\Phi)}{\Re(\Psi/\Phi)}
\endaligned 
\end{equation}
such that the symmetric matrix 
$\Bigl[
\begin{smallmatrix}
\alpha & \beta \\ \beta & \gamma 
\end{smallmatrix}
\Bigr]$ 
belongs to ${\rm SL}_2(\R)$. 
Moreover, 
$
\Bigl[
\begin{smallmatrix}
\alpha & \beta \\ \beta & \gamma 
\end{smallmatrix}
\Bigr]
$
is positive or negative definite according to the sign of $\Re(\Phi\overline{\Psi})$.  
%. 
\end{lemma}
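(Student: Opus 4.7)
The plan is to view the two scalar equations as linear in the unknowns $\alpha,\beta,\gamma$, solve them by direct complex-number arithmetic, and then verify the determinant identity and the sign condition for definiteness.

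First I would rewrite the equations as $(1+i\beta)\Psi=\gamma\Phi$ and $(1-i\beta)\Phi=\alpha\Psi$, so that $\gamma=(1+i\beta)\Psi/\Phi$ and $\alpha=(1-i\beta)\Phi/\Psi$. Since $\alpha$ and $\gamma$ must be real, the imaginary parts of these two ratios have to vanish. Expanding $(1-i\beta)(\Phi/\Psi)$ in terms of $\Re(\Phi/\Psi)$ and $\Im(\Phi/\Psi)$ immediately forces $\beta=\Im(\Phi/\Psi)/\Re(\Phi/\Psi)$, and the analogous computation starting from $(1+i\beta)(\Psi/\Phi)$ forces $\beta=-\Im(\Psi/\Phi)/\Re(\Psi/\Phi)$. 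These two expressions agree because $\Psi/\Phi=\overline{\Phi/\Psi}/|\Phi/\Psi|^2$, so $\beta$ is uniquely determined. Substituting back gives $\alpha=\Re((1-i\beta)(\Phi/\Psi))$ and $\gamma=\Re((1+i\beta)(\Psi/\Phi))$; a short simplification, using $\Re(\Phi/\Psi)=\Re(\Phi\overline{\Psi})/|\Psi|^2$ and the analogous identity for $\Psi/\Phi$, converts these into the forms stated in \eqref{c_402}.

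For the determinant identity I would compute
\[
\alpha\gamma-\beta^2=\frac{|\Phi|^2|\Psi|^2-\Im(\Phi\overline{\Psi})^2}{\Re(\Phi\overline{\Psi})^2}
\]
and invoke $|\Phi\overline{\Psi}|^2=|\Phi|^2|\Psi|^2=\Re(\Phi\overline{\Psi})^2+\Im(\Phi\overline{\Psi})^2$ to conclude $\alpha\gamma-\beta^2=1$, so the symmetric real matrix belongs to $\mathrm{SL}_2(\R)$. For the definiteness claim, since the determinant is $1>0$ the matrix is sign-definite, and its character is controlled by the sign of the trace $\alpha+\gamma=(|\Phi|^2+|\Psi|^2)/\Re(\Phi\overline{\Psi})$; as $|\Phi|^2+|\Psi|^2>0$ (both factors are nonzero by assumption), this sign coincides with that of $\Re(\Phi\overline{\Psi})$, giving the last assertion.

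The only mild subtlety is that the formulas presuppose $\Re(\Phi\overline{\Psi})\neq 0$; I would remark at the outset that if $\Re(\Phi\overline{\Psi})=0$ then $\Phi/\Psi$ is purely imaginary, and requiring $(1-i\beta)(\Phi/\Psi)$ to be real with $\beta\in\R$ forces $\Phi/\Psi=0$, contradicting $\Phi\neq 0$, so the system admits no solution in that case and the stated formulas are well defined whenever a solution exists. There is no serious obstacle here; the entire argument is a compact piece of complex algebra, and the cleanest point to push through is writing everything in terms of $\Phi/\Psi$ until the very end, then converting to the $\Phi\overline{\Psi}$ form demanded by \eqref{c_402}.
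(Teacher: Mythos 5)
Your proposal is correct, and it takes a somewhat different route from the paper's. The paper recasts the two complex equations as a real $4\times 4$ homogeneous linear system in $(\alpha,\beta,\gamma,1)$, asserts that the coefficient matrix has a one-dimensional kernel (which gives uniqueness), and then \emph{verifies} by substitution that \eqref{c_402} is a solution; for the definiteness claim it computes the two eigenvalues of $\bigl[\begin{smallmatrix}\alpha&\beta\\ \beta&\gamma\end{smallmatrix}\bigr]$ explicitly and checks they are real, nonzero, and share the sign of $\Re(\Phi\overline{\Psi})$. You instead \emph{derive} the solution: writing $\gamma=(1+i\beta)\Psi/\Phi$ and $\alpha=(1-i\beta)\Phi/\Psi$, the reality of $\alpha$ and $\gamma$ forces the stated value of $\beta$ and then determines $\alpha$ and $\gamma$, so existence and uniqueness come out together; and you settle definiteness by the determinant--trace criterion rather than by explicit eigenvalues. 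Your approach is a bit leaner and has the added virtue of making explicit the degenerate case $\Re(\Phi\overline{\Psi})=0$, where the system has no real solution at all — a point the paper's formulation leaves implicit (the kernel of the $4\times4$ matrix is still one-dimensional there, but its generator has vanishing last coordinate, so it yields no solution of the original inhomogeneous form). Both arguments are elementary and complete; I would only ask you to spell out the one-line computation showing that $\Re(\Phi/\Psi)=\Re(\Phi\overline{\Psi})/|\Psi|^2$ reconciles your $\Phi/\Psi$-based formulas with the $\Phi\overline{\Psi}$-based ones in \eqref{c_402}, which you already indicate.
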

\begin{proof} 
The given equation is equivalent to the linear equation
\[
\begin{bmatrix}
0 & \Im(\Psi) & \Re(\Phi) & - \Re(\Psi) \\
\Re(\Psi) & -\Im(\Phi) & 0 & - \Re(\Phi) \\
0 & -\Re(\Psi) & \Im(\Phi) & - \Im(\Psi) \\
\Im(\Psi) & \Re(\Phi) & 0 & - \Im(\Phi)
\end{bmatrix}
\begin{bmatrix}
\alpha \\ \beta \\ \gamma \\ 1
\end{bmatrix}
= 
\begin{bmatrix}
0 \\ 0 \\0 \\ 0
\end{bmatrix}. 
\]
The kernel of the matrix on the left-hand side is one-dimensional 
since $\Phi$ and $\Psi$ are non-zero. 
Hence the solution is unique. 
It can be confirmed by direct calculation that \eqref{c_402} solves the given equation   
and that $\alpha\gamma-\beta^2=1$. 
The eigenvalues of 
$
\Bigl[
\begin{smallmatrix}
\alpha & \beta \\ \beta & \gamma 
\end{smallmatrix}
\Bigr]
$
are 
\[
\frac{|\Phi|^2+|\Psi|^2 \pm 
\sqrt{(|\Phi|^2+|\Psi|^2)^2 -4(\Re(\Phi\overline{\Psi}))^2  }
}{2\,\Re(\Phi\overline{\Psi})}.
\]
These are nonzero real numbers for nonzero $\Phi$ and $\Psi$, 
because  
\[
\aligned 
(&|\Phi|^2+|\Psi|^2)^2 -4(\Re(\Phi\overline{\Psi}))^2 
=|\Phi-\Psi|^2|\Phi+\Psi|^2 \geq 0. 
\endaligned 
\]
The eigenvalues are both positive or both negative 
depending on the sign of $\Re(\Phi\overline{\Psi})$.  
\end{proof}

\begin{lemma} \label{lem_4_3} 
The following commutative relations hold in $S'(\R)$: 
\begin{equation} \label{c_403}
\frac{\partial}{\partial x}\mathsf{K} = - \mathsf{K}\frac{\partial}{\partial x}, 
\end{equation}
\begin{equation} \label{c_404}
\frac{\partial}{\partial x}\mathsf{P}_t f(x) = - \delta(x-t)f(x) + \mathsf{P}_t\frac{\partial}{\partial x}f(x).
\end{equation}
\end{lemma}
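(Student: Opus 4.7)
The plan is to verify both commutative relations at the level of Schwartz functions and then to extend them to $S'(\R)$ through the dualities set up in Section \ref{section_4_1}.

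For \eqref{c_403} the natural setting is the Fourier picture. Using the factorization $\mathsf{K}=\mathsf{F}^{-1}\mathsf{M}_u\mathsf{J}^\sharp\mathsf{F}$ and the identity $\mathsf{F}(d/dx)=\mathsf{M}_{-iz}\mathsf{F}$ (integration by parts), the operator $(d/dx)\mathsf{K}$ becomes, after the interior $\mathsf{F}\mathsf{F}^{-1}$ cancels, the composition $\mathsf{F}^{-1}\mathsf{M}_{-iz}\mathsf{M}_u\mathsf{J}^\sharp\mathsf{F}$. Scalar multiplications commute with each other, so the only nontrivial step is pushing $\mathsf{M}_{-iz}$ past the antilinear operator $\mathsf{J}^\sharp$. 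A direct computation gives
\[
(\mathsf{M}_{-iz}\mathsf{J}^\sharp F)(z) = -iz\,\overline{F(\bar z)} = \overline{i\bar z\,F(\bar z)} = (\mathsf{J}^\sharp\mathsf{M}_{iz}F)(z),
\]
hence $\mathsf{M}_{-iz}\mathsf{J}^\sharp = \mathsf{J}^\sharp\mathsf{M}_{iz}$, and the resulting $\mathsf{M}_{iz}=-\mathsf{M}_{-iz}$ produces precisely the sign flip in \eqref{c_403}. Reassembling the factors yields $(d/dx)\mathsf{K}=-\mathsf{K}(d/dx)$. Conceptually, \eqref{c_403} reflects the general fact that conjugate-linear operators anticommute with multiplication by purely imaginary scalars; here $d/dx$ plays the role of such multiplication on the Fourier side.

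For \eqref{c_404}, this is the distributional Leibniz rule applied to the product $\mathsf{P}_t f(x)=\mathbf{1}_{(-\infty,t)}(x)f(x)$. Since the distributional derivative of $\mathbf{1}_{(-\infty,t)}$ is $-\delta(x-t)$, pairing the left-hand side with a test function $g\in S(\R)$ and applying the product rule produces exactly the two terms on the right. For a generic $f\in S'(\R)$ the product $\delta(x-t)f(x)$ is not a priori meaningful, but in the later uses of this lemma (with $f=\Phi(t,\cdot)$ or $\Psi(t,\cdot)$) the pointwise value $f(t)$ is guaranteed by condition (K3), so the formula is to be read in that regular regime.

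The main potential obstacle is bookkeeping rather than computation: one must check that each of the identities $\mathsf{F}(d/dx)=\mathsf{M}_{-iz}\mathsf{F}$, $\mathsf{M}_{-iz}\mathsf{J}^\sharp = \mathsf{J}^\sharp\mathsf{M}_{iz}$, and the distributional product rule transfer to the whole of $S'(\R)$ compatibly with the conjugate-linearity of $\mathsf{K}$ and the Hermitian pairing introduced in Section \ref{section_4_1}. Once the conjugations are tracked correctly through these dualities, both identities fall out immediately.
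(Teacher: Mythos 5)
Your proof is correct and follows essentially the same route as the paper: for \eqref{c_403} the paper also works on the Fourier side and extracts the sign from the conjugation (using $(\mathsf{F}\mathsf{K}f)(z)=u(z)(\mathsf{F}f)^\sharp(z)$ pointwise rather than your operator-level factorization, a cosmetic difference), and for \eqref{c_404} it likewise invokes the distributional product rule with the derivative of $\mathbf{1}_{(-\infty,0)}(x-t)$ equal to $-\delta(x-t)$. Your added remark that $\delta(x-t)f(x)$ is only meaningful when $f$ has a well-defined value at $t$ (guaranteed by (K3) in the applications) is a point the paper leaves implicit.
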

\begin{proof}
By the formula $\left(\mathsf{F}\frac{\partial}{\partial x}f\right)(z) = (-iz)(\mathsf{F}f)(z)$ 
for a tempered distribution $f$, 
\[
\mathsf{F}\frac{\partial}{\partial x}\mathsf{K}f(z)
= -iz (\mathsf{F}\mathsf{K}f)(z)
= - u(z)  \left[-iz(\mathsf{F}f) (z) \right]^\sharp 
= - \mathsf{F}\mathsf{K}\frac{\partial}{\partial x}f(z). 
\]
Hence \eqref{c_403} holds in $S'(\R)$. 
Because $\mathsf{P}_t f(x) = \mathbf{1}_{(-\infty,0)}(x-t)f(x)$, \eqref{c_404} is shown as 
\[
\aligned 
\frac{\partial}{\partial x}\mathsf{P}_t f(x)
& = \frac{\partial}{\partial x}(\mathbf{1}_{(-\infty,0)}(x-t) f(x)) \\
& = -\delta(x-t)f(x) + \mathbf{1}_{(-\infty,0)}(x-t)\frac{\partial}{\partial x}f(x)
\endaligned 
\]
by the product rule for derivatives. 
\end{proof}

\begin{lemma} \label{lem_4_4}
Condition (O2) implies
\begin{equation} \label{c_405}
\frac{\partial}{\partial t}\mathsf{P}_t X 
= \delta(x-t)X(t,x) + \mathsf{P}_t \frac{\partial}{\partial t}X, 
\end{equation}
\begin{equation} \label{c_406}
\frac{\partial}{\partial t}\mathsf{K} X
= \mathsf{K} \frac{\partial}{\partial t} X, 
\qquad 
\frac{\partial}{\partial t}\mathsf{K}\mathsf{P}_t X 
= \mathsf{K} \frac{\partial}{\partial t}\mathsf{P}_t X
\end{equation}
for $X \in \{\Psi(t,x), \Phi(t,x)\}$, 
where  $\Psi(t,x)$ and $\Phi(t,x)$ are functions in \eqref{c_203} and \eqref{c_204},  
and the projection $\mathsf{P}_t$ acts on $X=X(t,x)$ as a function of $x$. 
\end{lemma}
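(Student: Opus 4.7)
The plan is to push the $t$-derivative through the operators defining $\mathsf{K}$ and $\mathsf{P}_t$ by a combination of the distributional product rule and the interchange already guaranteed by (K2), handling each of the three identities in turn.

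First I would establish \eqref{c_405}. Writing $\mathsf{P}_tX(x)=H(t-x)X(t,x)$ with $H$ the Heaviside function, whose $t$-derivative as a distribution in $x$ is $\delta(x-t)$, the formal Leibniz rule yields
\[
\partial_t[H(t-x)X(t,x)]=\delta(x-t)X(t,x)+H(t-x)\partial_tX(t,x),
\]
which, after interpreting $\delta(x-t)X(t,x)=X(t,t)\delta(x-t)$ using the pointwise value supplied by (K3), is exactly \eqref{c_405}. To make this rigorous I would pair against a Schwartz test function $\varphi(x)$: by Proposition \ref{prop_4_1}, $X$ is locally $L^1$, so $\langle \mathsf{P}_tX,\varphi\rangle=\int_{-\infty}^t X(t,x)\varphi(x)\,dx$. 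Applying classical Leibniz to this integral---with continuity of $X(t,\cdot)$ at $x=t$ coming from (K3) and $\partial_tX$ existing as a tempered distribution from (K2)---produces the boundary term $X(t,t)\varphi(t)=\langle X(t,t)\delta_t,\varphi\rangle$ together with the interior term $\langle\mathsf{P}_t\partial_tX,\varphi\rangle$.

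Next I would prove the first identity in \eqref{c_406} via the factorization $\mathsf{K}=\mathsf{F}^{-1}\mathsf{M}_u\mathsf{J}^\sharp\mathsf{F}$. The operators $\mathsf{F}^{-1}$, $\mathsf{M}_u$, and $\mathsf{J}^\sharp$ are all $t$-independent, and for the antilinear $\mathsf{J}^\sharp$ the commutation $\partial_t\mathsf{J}^\sharp=\mathsf{J}^\sharp\partial_t$ holds because $\partial_t$ is a \emph{real} derivative, so complex conjugation passes through the difference quotient. The only nontrivial interchange is $\partial_t\mathsf{F}=\mathsf{F}\partial_t$ on $X$, and this is the content of (K2). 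Chaining these four commutations gives $\partial_t\mathsf{K}X=\mathsf{K}\partial_tX$. For the second identity I would repeat the same argument with $Y=\mathsf{P}_tX$ in place of $X$; this reduces to verifying the (K2)-type interchange $\partial_t\mathsf{F}Y=\mathsf{F}\partial_tY$. Using \eqref{c_405} already proved, the right-hand side equals $X(t,t)e^{izt}+\mathsf{F}(\mathsf{P}_t\partial_tX)$, while the left-hand side can be written (at least distributionally in $z$) as $\partial_t\int_{-\infty}^te^{izx}X(t,x)\,dx$ and handled by the same Leibniz computation as in the first step, with $e^{izx}$ in the role of the test function (its interaction with $\partial_t$ being controlled by the (K2)-interchange for $X$ itself).

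The main obstacle is the rigorous justification of the distributional Leibniz rule when $X$ is only a tempered distribution: naive products of $\delta(x-t)$ with a distribution are ill-defined. The remedy is supplied by the global hypotheses available in the setting of the lemma: Proposition \ref{prop_4_1} ensures local integrability of $X$ near $x=t$, (K3) supplies the boundary value $X(t,t)$ so that $\delta(x-t)X(t,x)=X(t,t)\delta(x-t)$ has meaning, and (K2) supplies $\partial_tX$ as a tempered distribution together with its compatibility with $\mathsf{F}$. Once these three ingredients are in hand, the remainder of the proof is bookkeeping of the commutation relations described above.
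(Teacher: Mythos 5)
Your treatment of \eqref{c_405} and of the first identity in \eqref{c_406} matches the paper's: the former is the distributional product rule for $\mathbf{1}_{(-\infty,0)}(x-t)X(t,x)$, with the boundary term interpreted via the value $X(t,t)$ supplied by (K3), and the latter is obtained by pushing $\partial/\partial t$ through the factorization $\mathsf{K}=\mathsf{F}^{-1}\mathsf{M}_u\mathsf{J}^\sharp\mathsf{F}$, the only nontrivial interchange being the one postulated in (K2).

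The gap is in the second identity of \eqref{c_406}. You reduce it to the interchange $\partial_t\mathsf{F}(\mathsf{P}_tX)=\mathsf{F}\partial_t(\mathsf{P}_tX)$, but this is \emph{not} part of (K2), which asserts the commutation only for $X=\Phi,\Psi$ themselves, not for their truncations $\mathsf{P}_tX$. Your proposed verification --- Leibniz applied to $\partial_t\int_{-\infty}^te^{izx}X(t,x)\,dx$ with $e^{izx}$ ``in the role of the test function'' --- does not close this: $e^{izx}$ is not a Schwartz function and $\mathsf{P}_tX$ is not integrable on $(-\infty,t)$ (it equals $\mathsf{P}_t1+\varphi$ with $\varphi\in L^2(-\infty,t)$), so the displayed integral does not exist for real $z$ and everything must be read distributionally, where the required interchange for the truncated transform is of exactly the same nature as the one (K2) \emph{hypothesizes} for $X$; appealing to ``the (K2)-interchange for $X$ itself'' is therefore circular. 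The paper sidesteps the issue algebraically: applying $\partial_t$ to \eqref{c_203} gives $\partial_t\Phi+\partial_t\mathsf{K}\mathsf{P}_t\Phi=0$; applying $\mathsf{K}$ to \eqref{c_203} first (using $\mathsf{K}^2=1$ and the $t$-independence of $\mathsf{K}1$) and then $\partial_t$ gives $\partial_t\mathsf{K}\Phi+\partial_t\mathsf{P}_t\Phi=0$; replacing $\partial_t\mathsf{K}\Phi$ by $\mathsf{K}\partial_t\Phi$ via the first identity and applying $\mathsf{K}$ once more yields $\partial_t\Phi+\mathsf{K}\partial_t\mathsf{P}_t\Phi=0$, and comparison of the two expressions for $\partial_t\Phi$ gives $\partial_t\mathsf{K}\mathsf{P}_t\Phi=\mathsf{K}\partial_t\mathsf{P}_t\Phi$ (similarly for $\Psi$ via \eqref{c_204}). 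This derivation uses only the hypothesized commutations together with the integral equations defining $\Phi$ and $\Psi$ --- which is precisely why the lemma is stated for $X\in\{\Phi,\Psi\}$ and not for arbitrary $t$-dependent distributions. You should either adopt that argument or add the interchange for $\mathsf{P}_tX$ as an explicit, separately justified hypothesis.
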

\begin{proof} 
Equation \eqref{c_405} is shown in a similar argument as \eqref{c_404}. 
The first equation of \eqref{c_406} is shown as follows by \eqref{c_202} and (O2): 
\[
\aligned
\mathsf{F}\mathsf{K} \frac{\partial}{\partial t} \Phi
& = \mathsf{M}_u \mathsf{F}\mathsf{J}_\sharp \frac{\partial}{\partial t} \Phi
= \mathsf{M}_u \frac{\partial}{\partial t} \mathsf{F}\mathsf{J}_\sharp \Phi
= \mathsf{F}\mathsf{F}^{-1}\frac{\partial}{\partial t} \mathsf{M}_u \mathsf{F}\mathsf{J}_\sharp \Phi \\
& = \mathsf{F}\frac{\partial}{\partial t}\mathsf{F}^{-1} \mathsf{M}_u \mathsf{F}\mathsf{J}_\sharp \Phi
= \mathsf{F}\frac{\partial}{\partial t}\mathsf{K} \Phi. 
\endaligned 
\]
The second equation of \eqref{c_406} is obtained from the first equation as follows. 
Applying $\partial/\partial t$ to \eqref{c_203}, 
we have 
$(\partial/\partial t)\Phi + (\partial/\partial t)\mathsf{K}\mathsf{P}_t \Phi= 0$. 
On the other hand, applying $\partial/\partial t$ 
to  \eqref{c_203} after acting $\mathsf{K}$, 
we have (i) 
$(\partial/\partial t)\mathsf{K}\Phi + (\partial/\partial t)\mathsf{P}_t \Phi= 0$. 
Thus 
$\mathsf{K}(\partial/\partial t)\Phi + (\partial/\partial t)\mathsf{P}_t \Phi= 0$
by the first equation of \eqref{c_406}, and therefore (ii)
$(\partial/\partial t)\Phi + \mathsf{K}(\partial/\partial t)\mathsf{P}_t \Phi= 0$. 
Comparing (i) and (ii), we obtain the second equation.  
The same is true for $\Psi$. 
\end{proof}

\begin{proposition} \label{prop_4_5} 
Assume that (O1), (O2), (O3), and (O4) are satisfied.  
Then the solutions $\Psi(t,x)$ and $\Phi(t,x)$ of  \eqref{c_203} and \eqref{c_204} 
satisfy the differential system 
\begin{equation} \label{c_407}
- 
\frac{\partial}{\partial t}
\begin{bmatrix}
\Psi(t,x) \\ i \Phi(t,x)
\end{bmatrix}
=
\begin{bmatrix}
0 & -1 \\ 1 & 0 
\end{bmatrix}
H(t)
\left( i\frac{\partial}{\partial x} \right) 
\begin{bmatrix}
\Psi(t,x) \\ i \Phi(t,x)
\end{bmatrix}, 
\end{equation}
where partial derivatives $\partial/\partial t$ and $\partial/\partial x$
are taken in the sense of distribution 
and $H(t)$ is defined by \eqref{c_208} and \eqref{c_209}. 
\end{proposition}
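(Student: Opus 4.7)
The strategy is to reduce the PDE \eqref{c_407} to two scalar distributional identities $W_1 = W_2 = 0$, where
\begin{equation*}
W_1 := \tfrac{\partial}{\partial t}\Psi - i\beta\tfrac{\partial}{\partial x}\Psi + \gamma\tfrac{\partial}{\partial x}\Phi, \qquad W_2 := \tfrac{\partial}{\partial t}\Phi + \alpha\tfrac{\partial}{\partial x}\Psi + i\beta\tfrac{\partial}{\partial x}\Phi,
\end{equation*}
and then to derive, for each $W_j$, a distributional functional equation under $\mathsf{K}$ that, combined with Lemma~\ref{lem_4_2}, forces $W_j = 0$ by the uniqueness afforded by $\Vert \mathsf{K}[t] \Vert_{\rm op} < 1$.

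First I would differentiate \eqref{c_203} and \eqref{c_204} with respect to $x$ and $t$, using Lemmas~\ref{lem_4_3} and~\ref{lem_4_4} (whose hypotheses rest on (K2)) and the pointwise values at $x=t$ granted by (K3). The resulting four distributional identities on $\R$ are
\begin{equation*}
\begin{aligned}
\tfrac{\partial}{\partial x}\Phi - \mathsf{K}\mathsf{P}_t\tfrac{\partial}{\partial x}\Phi = -\Phi(t,t)\mathsf{K}\delta_t, \quad & \tfrac{\partial}{\partial t}\Phi + \mathsf{K}\mathsf{P}_t\tfrac{\partial}{\partial t}\Phi = -\Phi(t,t)\mathsf{K}\delta_t, \\
\tfrac{\partial}{\partial x}\Psi + \mathsf{K}\mathsf{P}_t\tfrac{\partial}{\partial x}\Psi = \Psi(t,t)\mathsf{K}\delta_t, \quad & \tfrac{\partial}{\partial t}\Psi - \mathsf{K}\mathsf{P}_t\tfrac{\partial}{\partial t}\Psi = \Psi(t,t)\mathsf{K}\delta_t.
\end{aligned}
\end{equation*}
In parallel, applying $\mathsf{K}$ to \eqref{c_203} and \eqref{c_204} and using $\mathsf{K}^2 = 1$ together with $\mathsf{K}1 = u(0)$ (established in Section~\ref{section_4_1}) gives $\mathsf{K}\Phi = u(0) - \mathsf{P}_t\Phi$ and $\mathsf{K}\Psi = u(0) + \mathsf{P}_t\Psi$.

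The key step is then to compute $\mathsf{K}W_j$ by (i) pulling $\mathsf{K}$ through each summand via the antilinearity (which sends $i\beta$ to $-i\beta$), (ii) using the commutation rules $\mathsf{K}\tfrac{\partial}{\partial x} = -\tfrac{\partial}{\partial x}\mathsf{K}$ from \eqref{c_403} and $\mathsf{K}\tfrac{\partial}{\partial t} = \tfrac{\partial}{\partial t}\mathsf{K}$ (on $\Phi,\Psi$) from \eqref{c_406}, and (iii) substituting the expressions for $\mathsf{K}\Phi, \mathsf{K}\Psi$ above. Expanding $\tfrac{\partial}{\partial t}\mathsf{P}_t X$ and $\tfrac{\partial}{\partial x}\mathsf{P}_t X$ via \eqref{c_404} and \eqref{c_405} then yields
\begin{equation*}
\begin{aligned}
\mathsf{K}W_1 &= \bigl[(1+i\beta)\Psi(t,t) - \gamma\Phi(t,t)\bigr]\delta_t + \mathsf{P}_t W_1, \\
\mathsf{K}W_2 &= \bigl[\alpha\Psi(t,t) - (1-i\beta)\Phi(t,t)\bigr]\delta_t - \mathsf{P}_t W_2.
\end{aligned}
\end{equation*}

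Now Lemma~\ref{lem_4_2}, applied to the non-zero pair $(\Phi(t,t),\Psi(t,t))$ from (K3), states precisely that with $\alpha,\beta,\gamma$ defined by \eqref{c_209} one has $(1+i\beta)\Psi(t,t) = \gamma\Phi(t,t)$ and $(1-i\beta)\Phi(t,t) = \alpha\Psi(t,t)$. Hence both $\delta_t$-coefficients vanish, leaving $\mathsf{K}W_1 = \mathsf{P}_t W_1$ and $\mathsf{K}W_2 = -\mathsf{P}_t W_2$. Applying $\mathsf{K}$ once more gives $W_j = \pm\mathsf{K}\mathsf{P}_t W_j$, and a subsequent application of $\mathsf{P}_t$ produces $(1 \mp \mathsf{K}[t])\mathsf{P}_t W_j = 0$. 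The injectivity of $1 \mp \mathsf{K}[t]$ from (K1), extended to the distributional setting by (K4) when $\mathsf{P}_t W_j$ does not land in $L^2(-\infty,t)$, then forces $\mathsf{P}_t W_j = 0$, whence $W_j = \pm\mathsf{K}(0) = 0$. The main obstacle is the sign bookkeeping forced by the antilinearity of $\mathsf{K}$: it is precisely the flip $i\beta \mapsto -i\beta$ that generates the $(1 \pm i\beta)$ combinations whose vanishing is guaranteed by Lemma~\ref{lem_4_2}, and following this consistently through the expansion is the delicate part.
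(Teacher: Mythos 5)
Your proposal is correct and is essentially the paper's own argument in repackaged form: where the paper shows that $\partial_t\Psi$ and $i\beta(t)\partial_x\Psi-\gamma(t)\partial_x\Phi$ satisfy the same inhomogeneous equation $(1-\mathsf{K}\mathsf{P}_t)X=\overline{\Psi(t,t)}\mathsf{K}\delta_t$ and concludes by uniqueness, you form the difference $W_j$ and show it satisfies the homogeneous equation, using Lemma \ref{lem_4_2} to kill the $\delta_t$-coefficient and (K4) for injectivity in exactly the same way (the paper's proof also invokes (K4) despite it not appearing in the statement, so this is not a gap you introduced). The only slip is that your four displayed intermediate identities omit the complex conjugates forced by antilinearity (e.g.\ \eqref{c_411} reads $-\overline{\Phi(t,t)}\mathsf{K}\delta_t$, not $-\Phi(t,t)\mathsf{K}\delta_t$); since your key computation of $\mathsf{K}W_j$ proceeds directly from $\mathsf{K}\Phi=u(0)-\mathsf{P}_t\Phi$, $\mathsf{K}\Psi=u(0)+\mathsf{P}_t\Psi$ and the commutation rules rather than from those identities, and its stated outcome is verifiably correct, this does not affect the argument.
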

\begin{proof} 
First, we apply $(\partial/\partial t)$ to both sides of \eqref{c_204}. 
Then, 
\[
(\partial/\partial t)\Psi - \mathsf{K}(\partial/\partial t)\mathsf{P}_t \Psi = 0
\] 
by \eqref{c_406}, and further 
$(\partial/\partial t)\Psi 
-  \overline{\Psi(t,t)} \mathsf{K}\delta_t  - \mathsf{K}\mathsf{P}_t (\partial/\partial t)\Psi = 0$
by (O3) and \eqref{c_405}, where $\delta_t(x)=\delta(x-t)$. Hence, 
\begin{equation} \label{c_408}
\frac{\partial}{\partial t}\Psi - \mathsf{K}\mathsf{P}_t \frac{\partial}{\partial t}\Psi
= \overline{\Psi(t,t)} \mathsf{K}\delta_t.  
\end{equation}
Second, we apply $(\partial/\partial x)$ to both sides of \eqref{c_204}. 
Then, 
\[(\partial/\partial x)\Psi + \mathsf{K}(\partial/\partial x)\mathsf{P}_t \Psi = 0\] 
by \eqref{c_403}, and further 
$(\partial/\partial x)\Psi 
-  \overline{\Psi(t,t)} \mathsf{K}\delta_t  + \mathsf{K}\mathsf{P}_t (\partial/\partial x)\Psi = 0$
by (O3) and \eqref{c_404}. Hence, 
\begin{equation} \label{c_409}
\frac{\partial}{\partial x}\Psi + \mathsf{K}\mathsf{P}_t \frac{\partial}{\partial x}\Psi
= 
\overline{\Psi(t,t)} \mathsf{K}\delta_t, 
\end{equation}
and therefore, 
\begin{equation} \label{c_410}
i \beta(t)\frac{\partial}{\partial x}\Psi 
  - \mathsf{K}\mathsf{P}_t (i \beta(t))\frac{\partial}{\partial x}\Psi
= \overline{(-i \beta(t))\Psi(t,t)} \mathsf{K}\delta_t 
\end{equation}
for any real number $\beta(t)$. 
Third,  we apply $(\partial/\partial x)$ to both sides of \eqref{c_203}. 
Then, 
$(\partial/\partial x)\Phi - \mathsf{K}(\partial/\partial x)\mathsf{P}_t \Phi = 0$ 
by \eqref{c_403}, and further 
$(\partial/\partial x)\Phi 
+  \overline{\Phi(t,t)} \mathsf{K}\delta_t  - \mathsf{K}\mathsf{P}_t (\partial/\partial x)\Phi = 0$
by (O3) and \eqref{c_404}. Hence, 
\begin{equation} \label{c_411}
\quad \frac{\partial}{\partial x}\Phi
- \mathsf{K}\mathsf{P}_t \frac{\partial}{\partial x}\Phi
= -  \overline{\Phi(t,t)} \mathsf{K}\delta_t, 
\end{equation}
and therefore, 
\begin{equation} \label{c_412}
-\gamma(t)
\frac{\partial}{\partial x}\Phi
 + \mathsf{K}\mathsf{P}_t \gamma(t)\frac{\partial}{\partial x}\Phi
= \overline{\gamma(t)\Phi(t,t)} \mathsf{K}\delta_t
\end{equation}
for any real number $\gamma(t)$. 
Adding \eqref{c_410} and \eqref{c_412}, 
\[
\left( 1 - \mathsf{K}\mathsf{P}_t \right) \left[i \beta(t)\frac{\partial}{\partial x} \Psi
-\gamma(t)\frac{\partial}{\partial x} \Phi \right] 
= \overline{((-i \beta(t))\Psi(t,t)+\gamma(t)\Phi(t,t))} \mathsf{K}\delta_t. 
\]
The right-hand side is equal to $\overline{\Psi(t,t)}\mathsf{K}\delta_t$ 
by Lemma \ref{lem_4_2} if we take $\beta(t)$ and $\gamma(t)$ as in \eqref{c_209}.  
Comparing the obtained equality with \eqref{c_408}, we have
\[
\aligned 
\frac{\partial}{\partial t} \Psi(t,x)
& =  i \beta(t)\frac{\partial}{\partial x} \Psi(t,x) -  \gamma(t)\frac{\partial}{\partial x} \Phi(t,x)
\endaligned 
\]
by (O4). Hence the first line of \eqref{c_407} is obtained. A similar argument gives  
\begin{equation} \label{c_413}
\frac{\partial}{\partial t} \Phi
+ \mathsf{K}\mathsf{P}_t \frac{\partial}{\partial t}\Phi 
= -\overline{\Phi(t,t)}\mathsf{K}\delta_t. 
\end{equation}
This and  $\Phi(t,t) = \alpha(t)\Psi(t,t)  + i \beta(t)\Phi(t,t)$ 
with \eqref{c_209} 
lead to
\[
\aligned 
 \frac{\partial}{\partial t}\Phi(t,x)
&=
- \alpha(t)\frac{\partial}{\partial x} \Psi(t,x) - i \beta(t)\frac{\partial}{\partial x}\Phi(t,x)  
\endaligned 
\]
by Lemma \ref{lem_4_2}. 
Hence the second line of \eqref{c_407} is obtained.  
\end{proof}

%%%%%%%%%%%%%%%%%%%%%%%%%%%%%%%%%%%%%%%%%%%%%%%%%%%%%%%%%%%%%%%%%%%%%%%%%%%%%%%%%%%%%%%%
%
\subsection{Proof of (6) and (7)}
%
%%%%%%%%%%%%%%%%%%%%%%%%%%%%%%%%%%%%%%%%%%%%%%%%%%%%%%%%%%%%%%%%%%%%%%%%%%%%%%%%%%%%%%%%

For (6), we obtain \eqref{c_207} 
by applying the projection $(1-\mathsf{P}_t)$ 
to both sides of \eqref{c_407}, then taking their Fourier transform, 
and finally extend them to $\C_+ \cup D$, 
since $\mathsf{F}$ and $\partial/\partial t$ are commutative by (O2). 

The first half of (7) follows from Lemma \ref{lem_4_2}. 
If $u$ is symmetric, $k=\mathsf{F}^{-1}u$ is real-valued, 
since $\bar{k}=\mathsf{F}^{-1}(u^\sharp(-z))$. 
Therefore, if $\Phi$ solves \eqref{c_203}, 
then $\overline{\Phi}$ also solves it. 
Hence $\Phi=\overline{\Phi}$ by (O4) 
and thus $H(t)$ is diagonal by definition \eqref{c_209}. \hfill $\Box$

%%%%%%%%%%%%%%%%%%%%%%%%%%%%%%%%%%%%%%%%%%%%%%%%%%%%%%%%%%%%%%%%%%%%%%%%%%%%%%%%%%%%%%%%
%
\section{Proof of results in Section \ref{section_2_2}} \label{section_5}
%
%%%%%%%%%%%%%%%%%%%%%%%%%%%%%%%%%%%%%%%%%%%%%%%%%%%%%%%%%%%%%%%%%%%%%%%%%%%%%%%%%%%%%%%%

%%%%%%%%%%%%%%%%%%%%%%%%%%%%%%%%%%%%%%%%%%%%%%%%%%%%%%%%%%%%%%%%%%%%%%%%%%%%%%%%%%%%%%%%
%
\subsection{Auxiliary results needed for the proof.}
%
%%%%%%%%%%%%%%%%%%%%%%%%%%%%%%%%%%%%%%%%%%%%%%%%%%%%%%%%%%%%%%%%%%%%%%%%%%%%%%%%%%%%%%%%

%
To calculate the reproducing kernel of $\mathsf{F}(\mathcal{V}_t(u))$ explicitly, 
we study the solutions of the equations
\begin{equation} \label{c_501}
\phi^+ + \mathsf{K}\mathsf{P}_t\phi^+= \mathsf{K}\delta_t, 
\end{equation}
\begin{equation} \label{c_502}
\phi^- - \mathsf{K}\mathsf{P}_t\phi^-= \mathsf{K}\delta_t
\end{equation}
for $\phi^\pm$ in the space of tempered distributions $S'(\R)$, 
where $\delta_t(x)=\delta(x-t)$ as before. 

\begin{proposition} \label{prop_5_1} 
Suppose that $\Vert \mathsf{K}[t] \Vert_{\rm op}<1$ 
(as an operator on $L^2(-\infty,t)$) and that (O3), (O4) are satisfied. 
Then equations \eqref{c_501} and \eqref{c_502} 
have unique solutions in $S'(\R)$. 
\end{proposition}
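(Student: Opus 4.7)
The plan is to project each of \eqref{c_501} and \eqref{c_502} onto the half-line $(-\infty, t)$ so that the contractivity $\|\mathsf{K}[t]\|_{\rm op}<1$ on $L^2(-\infty,t)$ becomes directly usable. A tempered distribution $\phi^+\in S'(\R)$ solves \eqref{c_501} if and only if $\phi^+=\mathsf{K}\delta_t-\mathsf{K}\mathsf{P}_t\phi^+$, so $\phi^+$ is determined by $\eta:=\mathsf{P}_t\phi^+$. Applying $\mathsf{P}_t$ to this identity and using $\mathsf{P}_t\mathsf{K}\mathsf{P}_t=\mathsf{K}[t]$, I would reduce the problem to
\[
(1+\mathsf{K}[t])\,\eta = \mathsf{P}_t\mathsf{K}\delta_t \quad \text{in } \mathsf{P}_tS'(\R),
\]
and conversely, any $\eta\in\mathsf{P}_tS'(\R)$ satisfying this yields a solution $\phi^+:=\mathsf{K}\delta_t-\mathsf{K}\eta$ of \eqref{c_501} for which $\mathsf{P}_t\phi^+=\eta$. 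An identical reduction applies to \eqref{c_502} with $(1-\mathsf{K}[t])$ in place of $(1+\mathsf{K}[t])$.

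For existence, I would invoke the first clause of (K4), which places $\mathsf{P}_t\mathsf{K}\delta_t$ in $L^2(-\infty,t)$. The hypothesis $\|\mathsf{K}[t]\|_{\rm op}<1$ then makes $1\pm\mathsf{K}[t]$ invertible on $L^2(-\infty,t)$ by Neumann series, so I would set
\[
\eta^\pm := (1\pm\mathsf{K}[t])^{-1}\mathsf{P}_t\mathsf{K}\delta_t \in L^2(-\infty,t) \subset S'(\R),
\]
and define $\phi^\pm:=\mathsf{K}\delta_t\mp\mathsf{K}\eta^\pm$; a direct check, using $\mathsf{P}_t\phi^\pm=\eta^\pm$, then confirms \eqref{c_501} and \eqref{c_502}. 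For uniqueness, I would invoke the second clause of (K4): if $\chi=\phi_1-\phi_2\in S'(\R)$ is the difference of two solutions of \eqref{c_501}, then $(1+\mathsf{K}[t])\mathsf{P}_t\chi=0$ in $\mathsf{P}_tS'(\R)$, so the triviality of the kernel forces $\mathsf{P}_t\chi=0$, whence $\chi=-\mathsf{K}\mathsf{P}_t\chi=0$. The same argument with $(1-\mathsf{K}[t])$ covers \eqref{c_502}.

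The delicate point will be the interpretation of the ``or else'' in (K4), since the existence half of the argument wants the first clause while the uniqueness half wants the second; one must verify that both are simultaneously available in any situation where the proposition is to be applied, or else supply an alternative uniqueness argument from the $L^2$ clause alone (for example, by showing that any distributional solution $\chi$ of the homogeneous equation necessarily has $\mathsf{P}_t\chi\in L^2(-\infty,t)$, so that $L^2$-invertibility already handles uniqueness). All remaining steps are routine manipulations with the involution property $\mathsf{K}^2=1$ on $S'(\R)$ and the definition of $\mathsf{P}_t$.
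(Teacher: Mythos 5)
Your reduction of \eqref{c_501} and \eqref{c_502} to the projected equations $(1\pm\mathsf{K}[t])\mathsf{P}_t\phi^{\pm}=\mathsf{P}_t\mathsf{K}\delta_t$, and your uniqueness argument via the second clause of (K4), agree with the paper. But the ``delicate point'' you flag at the end is a genuine gap, not a technicality to be checked: (K4) is a disjunction, and your existence argument is valid only under its first clause. If $\mathsf{P}_t\mathsf{K}\delta_t\notin L^2(-\infty,t)$ and only the second clause holds, the Neumann series for $(1\pm\mathsf{K}[t])^{-1}$ cannot be applied to the right-hand side, and triviality of the kernels of $1\pm\mathsf{K}[t]$ on $\mathsf{P}_tS'(\R)$ gives injectivity but no surjectivity, so existence remains unproved in that case. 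The two clauses are not ``simultaneously available'' by hypothesis, so the proof as written does not cover all situations allowed by (K4).

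The paper closes exactly this hole with a different existence argument that never inverts anything against $\mathsf{P}_t\mathsf{K}\delta_t$: it exhibits the solutions explicitly. Differentiating the defining equations \eqref{c_203} and \eqref{c_204} of $\Phi$ and $\Psi$ in $x$ and using (K3) together with Lemma \ref{lem_4_3} yields the identities \eqref{c_409} and \eqref{c_411}, namely $\partial_x\Psi+\mathsf{K}\mathsf{P}_t\,\partial_x\Psi=\overline{\Psi(t,t)}\,\mathsf{K}\delta_t$ and $\partial_x\Phi-\mathsf{K}\mathsf{P}_t\,\partial_x\Phi=-\overline{\Phi(t,t)}\,\mathsf{K}\delta_t$. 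Since $\mathsf{K}$ is antilinear, multiplying the second identity by $i$ converts it into an equation of the first (``plus'') type with right-hand side $\overline{i\Phi(t,t)}\,\mathsf{K}\delta_t$, and a suitable \emph{real}-linear combination of $\partial_x\Psi$ and $i\,\partial_x\Phi$ normalizes the right-hand side to $\mathsf{K}\delta_t$; this gives the explicit solutions \eqref{c_503} and \eqref{c_504} (and similarly for $\phi^-$). These exist as tempered distributions as soon as $\Vert\mathsf{K}[t]\Vert_{\rm op}<1$ and (K3) hold, independently of which clause of (K4) is in force. To repair your proof you would need to import this construction (or otherwise prove solvability under the second clause alone, which does not follow from injectivity); your suggested fallback for uniqueness is likewise only a proposal and is not needed once the second clause is invoked for uniqueness and the explicit construction is used for existence.
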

\begin{proof} 
From the shape of equations, 
solutions of \eqref{c_501} and \eqref{c_502} 
are uniquely determined by their projection to $(-\infty,t)$. 
Hence if $\mathsf{P}_t\mathsf{K}\delta_t \in L^2(-\infty,t)$, 
$\Vert \mathsf{K}[t] \Vert_{\rm op}<1$ guarantees the existence and uniqueness of the solutions 
under the condition 
$\mathsf{P}_t\phi^\pm \in L^2(-\infty,t)$. 
If $\mathsf{P}_t\mathsf{K}\delta_t$ does not belong to $L^2(-\infty,t)$, 
the latter half of (O4) guarantees the uniqueness of the solutions in $S'(\R)$. 
The existence of the solutions is shown by constructing them concretely. 
From \eqref{c_409} and \eqref{c_411}, 
we find that $\phi^\pm$ defined by 
\begin{equation} \label{c_503}
\aligned 
\phi^+=\phi^+(t,x)
& =\frac{\Re(\Phi(t,t))}{\Re(\overline{\Psi(t,t)}\Phi(t,t))} 
\frac{\partial}{\partial x}\Psi(t,x) \\
& \qquad - i \, 
\frac{\Im(\Psi(t,t))}{\Re(\overline{\Psi(t,t)}\Phi(t,t))}
\frac{\partial}{\partial x}\Phi(t,x), 
\endaligned 
\end{equation}
\begin{equation} \label{c_504}
\aligned 
\phi^-=\phi^-(t,x)
& =i\,\frac{\Im(\Phi(t,t))}{\Re(\overline{\Psi(t,t)}\Phi(t,t))}
\frac{\partial}{\partial x}\Psi(t,x) \\
& \qquad -\frac{\Re(\Psi(t,t))}{\Re(\overline{\Psi(t,t)}\Phi(t,t))}
\frac{\partial}{\partial x}\Phi(t,x)
\endaligned
\end{equation}
solve the equations \eqref{c_501} and \eqref{c_502}, respectively. 
\end{proof}

If  (O2) is added to the assumptions of Proposition \ref{prop_5_1}, 
we  find that $\phi^\pm$ defined by 
\begin{equation} \label{c_505}
\aligned 
\phi^+=\phi^+(t,x) 
&= - \frac{\Re(\Psi(t,t))}{\Re(\overline{\Phi(t,t)}\Psi(t,t))}
\frac{\partial}{\partial t}\Phi(t,x) \\
& \qquad + i
\frac{\Im(\Phi(t,t))}{\Re(\overline{\Phi(t,t)}\Psi(t,t))}
\frac{\partial}{\partial t}\Psi(t,x), 
\endaligned 
\end{equation}
\begin{equation} \label{c_506}
\aligned 
\phi^-=\phi^-(t,x)
&=-i\frac{\Im(\Psi(t,t))}{\Re(\overline{\Phi(t,t)}\Psi(t,t))}
\frac{\partial}{\partial t}\Phi(t,x) \\
& \qquad +
\frac{\Re(\Phi(t,t)) }{\Re(\overline{\Phi(t,t)}\Psi(t,t))}
\frac{\partial}{\partial t}\Psi(t,x)
\endaligned
\end{equation}
also solve equations \eqref{c_501} and \eqref{c_502}, respectively, 
by \eqref{c_408} and \eqref{c_413}. 
Therefore, by comparing the right-hand sides of 
\eqref{c_503} and \eqref{c_505}, 
and
\eqref{c_504} and \eqref{c_506}, 
we get the system \eqref{c_407} again. 
Also, the following holds immediately from Proposition \ref{prop_5_1}. 

\begin{proposition} \label{prop_5_2} 
Suppose that $\Vert \mathsf{K}[t] \Vert_{\rm op}<1$ 
and that (O3), (O4) are satisfied. Let $a_0$, $a_1$, $b_0$, $b_1$ be real numbers. 
Then, 
$f + \mathsf{K}\mathsf{P}_t f = (a_0+ia_1) \mathsf{K}\delta_t$ 
for some $f \in S'(\R)$ implies $f= a_0 \phi_t^++i a_1\phi_t^-$, 
and  
$g - \mathsf{K}\mathsf{P}_t g = (b_0+ib_1) \mathsf{K}\delta_t$ 
for some $g \in S'(\R)$ implies $g=b_0\phi_t^-+ib_1\phi_t^+$. 
Conversely, 
$f + \mathsf{K}\mathsf{P}_t f = (a_0+ia_1) \mathsf{K}\delta_t$ 
and 
$g - \mathsf{K}\mathsf{P}_t g = (b_0+ib_1) \mathsf{K}\delta_t$ 
imply
\[
\phi^+(t,x)=\frac{1}{\Re((a_0+ia_1)(b_0-ib_1))}(b_0f(t,x)-ia_1g(t,x)), 
\]
\[
\phi^-(t,x)=\frac{1}{\Re((a_0+ia_1)(b_0-ib_1))}(-ib_1f(t,x)+a_0 g(t,x)). 
\]
\end{proposition}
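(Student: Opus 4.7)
The plan is to reduce everything to the defining equations \eqref{c_501} and \eqref{c_502} for $\phi^\pm$, exploiting the antilinearity of $\mathsf{K}$ together with the uniqueness assertion of Proposition \ref{prop_5_1}. The content of Proposition \ref{prop_5_2} is essentially a $2\times 2$ linear algebra manipulation once antilinearity is correctly accounted for, so no new analytic input is needed beyond what Proposition \ref{prop_5_1} already supplies.

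For the first implication, I would take the candidate $f = a_0\phi^+_t + ia_1\phi^-_t$ and compute $f + \mathsf{K}\mathsf{P}_t f$. Since $\mathsf{K}$ is antilinear, $\mathsf{K}\mathsf{P}_t(ia_1\phi^-_t) = -ia_1\,\mathsf{K}\mathsf{P}_t\phi^-_t$ (the factor $i$ becomes $-i$, while $a_0, a_1\in\R$ pass through unchanged). Grouping the real and imaginary parts gives
\[
f + \mathsf{K}\mathsf{P}_t f = a_0(\phi^+_t + \mathsf{K}\mathsf{P}_t\phi^+_t) + ia_1(\phi^-_t - \mathsf{K}\mathsf{P}_t\phi^-_t) = (a_0+ia_1)\mathsf{K}\delta_t,
\]
using exactly \eqref{c_501} and \eqref{c_502}. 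Thus the claimed formula produces a solution; the uniqueness statement in Proposition \ref{prop_5_1} (applied to the $\R$-linear equation $f + \mathsf{K}\mathsf{P}_t f = \cdot$ in $S'(\R)$, noting that both $f\mapsto f+\mathsf{K}\mathsf{P}_t f$ and $f\mapsto f-\mathsf{K}\mathsf{P}_t f$ are $\R$-linear because $\mathsf{K}$ is antilinear) then forces $f$ to coincide with this candidate. The second implication $g = b_0\phi^-_t + ib_1\phi^+_t$ is verified identically, using the opposite sign convention in \eqref{c_502}.

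For the converse, I would regard the previously obtained identities
\[
f = a_0\phi^+_t + ia_1\phi^-_t, \qquad g = ib_1\phi^+_t + b_0\phi^-_t
\]
as a $2\times 2$ system in the unknowns $\phi^\pm_t$. Its determinant is $a_0 b_0 - (ia_1)(ib_1) = a_0 b_0 + a_1 b_1 = \Re((a_0+ia_1)(b_0-ib_1))$, which matches the normalising factor in the statement. Cramer's rule immediately yields
\[
\phi^+_t = \frac{b_0 f - ia_1 g}{\Re((a_0+ia_1)(b_0-ib_1))}, \qquad \phi^-_t = \frac{-ib_1 f + a_0 g}{\Re((a_0+ia_1)(b_0-ib_1))},
\]
as required. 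The non-vanishing of the determinant is automatic whenever the stated hypotheses are non-degenerate, since the direct implications already produced well-defined $f,g$.

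The only subtlety — and the place where a careless computation would go wrong — is tracking the sign flip caused by antilinearity in the step $\mathsf{K}(ic\cdot\bullet) = -ic\,\mathsf{K}(\bullet)$ for $c\in\R$. Once this is handled consistently, both halves of the proposition reduce to the identities \eqref{c_501}, \eqref{c_502} and to inverting a small real-determinant matrix, so I do not expect any genuine obstacle beyond bookkeeping.
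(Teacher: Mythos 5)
Your proof is correct and is exactly the elaboration the paper intends: the paper states that Proposition \ref{prop_5_2} ``holds immediately from Proposition \ref{prop_5_1}'', and your argument --- verifying the candidates via \eqref{c_501}, \eqref{c_502} and the antilinearity of $\mathsf{K}$, invoking the ($\R$-linear) uniqueness from Proposition \ref{prop_5_1}/(K4), and inverting the resulting $2\times 2$ system with determinant $\Re((a_0+ia_1)(b_0-ib_1))$ --- is precisely that ``immediate'' deduction, with the sign flip from $\mathsf{K}(ic\,\cdot) = -ic\,\mathsf{K}(\cdot)$ tracked correctly.
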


For $t \in \R$, we identify $\mathsf{F}L^2(t,\infty)$ 
with $e^{itz}H^2(\C_+)$ as usual by the Poisson integral formula. 
Then, 
$\mathsf{F}L^2(t,\infty)=e^{itz}H^2(\C_+)$ 
is a reproducing kernel Hilbert space consisting of 
holomorphic functions on $\C_+$. 
In particular, the evaluation $F \mapsto F(z)$ is continuous for all $z \in \C_+$. 
The reproducing kernel is 
$\frac{1}{2\pi}
\langle (1-\mathsf{P}_t)e_w,(1-\mathsf{P}_t)e_z \rangle
= ie^{it(w-\bar{z})}/(2\pi(w-\bar{z}))$, 
where $e_z(x)=\exp(izx)$. 

\begin{lemma} \label{lem_5_3} 
Let $t \in \R$. 
If $\mathcal{V}_t(u) \not=\{0\}$, then 
there exists $Y_z^t \in \mathcal{V}_t(u)$ for each $z \in \C_+$ 
such that 
$\langle f, \overline{Y_z^t} \rangle=(\mathsf{F}f)(z)$ 
holds for all $f \in \mathcal{V}_t(u)$. 
Actually, $Y_z^t$ is the orthogonal projection of 
$(1-\mathsf{P}_t)e_z \in L^2(\R)$ to $\mathcal{V}_t(u)$. 
\end{lemma}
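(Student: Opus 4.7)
The strategy is a Riesz representation argument in the closed Hilbert subspace $\mathcal{V}_t(u) \subset L^2(\R)$, followed by an explicit identification of the representing element as an orthogonal projection. I would first observe that $\mathsf{F}L^2(t,\infty) = e^{itz}H^2(\C_+)$ is a reproducing kernel Hilbert space with kernel $ie^{it(w-\bar z)}/(2\pi(w-\bar z))$, as recalled in the discussion immediately preceding the lemma. Consequently, for each fixed $z \in \C_+$, the evaluation functional $F \mapsto F(z)$ is bounded on this Hardy-type space, and pulling back through the Fourier transform (isometric up to a factor of $\sqrt{2\pi}$) shows that $L_z : f \mapsto (\mathsf{F}f)(z)$ is a bounded $\C$-linear functional on $L^2(t,\infty)$, and hence on its closed subspace $\mathcal{V}_t(u)$.

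Applying the Riesz representation theorem to $L_z$ on the Hilbert space $\mathcal{V}_t(u)$ then produces a unique element, which the paper writes as $\overline{Y_z^t}$ with $Y_z^t \in \mathcal{V}_t(u)$, satisfying the stated reproducing identity. To identify $Y_z^t$ with the orthogonal projection claimed, I would rewrite the Fourier integral directly as
\[
(\mathsf{F}f)(z) = \int_t^\infty f(x) e^{izx}\,dx = \int_\R f(x)\,[(1-\mathsf{P}_t)e_z](x)\,dx,
\]
which is a pairing against $(1-\mathsf{P}_t)e_z$, an element of $L^2(t,\infty)$ because $\Im z > 0$ ensures integrability at $+\infty$. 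Decomposing $(1-\mathsf{P}_t)e_z = Y_z^t + R$ with $Y_z^t$ the orthogonal projection onto $\mathcal{V}_t(u)$ and $R \in \mathcal{V}_t(u)^\perp$, the contribution of $R$ in the pairing with $f \in \mathcal{V}_t(u)$ vanishes, leaving the desired representation.

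The main obstacle will be reconciling the bilinear pairing $\int fg\,dx$ that arises naturally from the Fourier integral with the sesquilinear Hermitian form $\langle f, g\rangle = \int f\bar g\,dx$ demanded by the Riesz theorem; this reconciliation is precisely what the conjugation bar in $\overline{Y_z^t}$ encodes. The key structural input for carrying it through should be the $\mathsf{K}$-invariance of $\mathcal{V}_t(u)$, which by the self-adjointness of $\mathsf{K}$ (Proposition \ref{prop_2_1}) also yields $\mathsf{K}$-invariance of the complement $\mathcal{V}_t(u)^\perp$; this additional antilinear symmetry on the complement is what allows the projection component alone to survive in the pairing, despite $\mathcal{V}_t(u)$ not being closed under ordinary complex conjugation in general.
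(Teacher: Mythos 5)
Your overall architecture coincides with the paper's: boundedness of $F \mapsto F(z)$ on the closed subspace $\mathsf{F}(\mathcal{V}_t(u)) \subset \mathsf{F}L^2(t,\infty) = e^{itz}H^2(\C_+)$, then the Riesz representation theorem on $\mathcal{V}_t(u)$, then the decomposition $(1-\mathsf{P}_t)e_z = P_z^t + Q_z^t$ with $P_z^t \in \mathcal{V}_t(u)$ and $Q_z^t \in \mathcal{V}_t(u)^\perp$, the identification resting on the vanishing $\int_{\R} f(x)\,Q_z^t(x)\,dx = 0$ for all $f \in \mathcal{V}_t(u)$. You correctly isolate this vanishing as the crux, and correctly observe that it concerns the bilinear pairing $\int fg\,dx$ whereas the orthogonality defining $Q_z^t$ is with respect to the Hermitian form.

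The gap is in the mechanism you offer for that vanishing. Since $\int_{\R} f\,Q_z^t\,dx = \langle f, \overline{Q_z^t}\rangle$, what is needed is that the pointwise conjugate $\overline{Q_z^t}$ again lies in $\mathcal{V}_t(u)^\perp$, which is equivalent to $\mathcal{V}_t(u)$ being invariant under $f \mapsto \bar f$. The $\mathsf{K}$-invariance of $\mathcal{V}_t(u)^\perp$ that you invoke does hold (from $\langle \mathsf{K}Q, f\rangle = \langle \mathsf{K}f, Q\rangle$), but it only yields $\langle f, \mathsf{K}Q_z^t\rangle = 0$; the antilinear involutions $\mathsf{K}$ and $f \mapsto \bar f$ are different (they are interchangeable essentially only when the kernel $k = \mathsf{F}^{-1}u$ is real, i.e. $u$ is symmetric), so this does not give $\langle f, \overline{Q_z^t}\rangle = 0$. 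In other words, the ``additional antilinear symmetry'' you name is not the one the bilinear pairing sees, and the one that is actually required is exactly the ordinary conjugation-invariance that you concede may fail. For comparison, the paper's own proof simply asserts $\int f\,Q_z^t\,dx = 0$ at this point without further comment, so your write-up is no weaker than the source; but as a proof, your third paragraph does not establish the step it is meant to establish, and it would have to be replaced by a genuine verification of $\int f\,Q_z^t\,dx = 0$ (for instance by working from the explicit description of $\mathcal{V}_t(u)^\perp$ in Lemma \ref{lem_5_5} and tracking where the support conditions on $f$ and $\mathsf{K}f$ actually enter).
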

\begin{proof} 
$\mathsf{F}(\mathcal{V}_t(u))$ is a reproducing kernel Hilbert space 
consisting of functions on $\C_+$, 
since it is a closed subspace of 
$\mathsf{F}L^2(t,\infty)$ by definition. 
Therefore, $\mathsf{F}f \mapsto \mathsf{F}f(z)$ is continuous 
on $\mathsf{F}(\mathcal{V}_t(u))$ for $z \in \C_+$. 
Hence $f \mapsto \mathsf{F}f(z)$ is a linear continuous functional on $\mathcal{V}_t(u)$, 
and thus $Y_z^t$ exists by the Riesz representation theorem. 
If we have the decomposition 
$(1-\mathsf{P}_t)e_z= P_z^t+Q_z^t$ with  
$P_z^t \in \mathcal{V}_t(u)$ and $Q_z^t \in \mathcal{V}_t(u)^\perp$,  
\[
\aligned 
\int f(x)e^{izx} \, dx
& = \int f(x)P_z^t(x) \, dx + \int f(x)Q_z^t(x) \, dx \\
& = \int f(x)P_z^t(x) \, dx + 0 = \langle f, \overline{P_z^t}\rangle
\endaligned 
\]  
for all $f \in \mathcal{V}_t(u)$. 
Thus $Y_z^t$ coincides with the orthogonal projection $P_z^t$ of $(1-\mathsf{P}_t)e_z$.  
\end{proof}

\begin{lemma} \label{lem_5_4} 
Let $u \in U_{\rm loc}^1(\R)$ with $D \cap \C_-\not=\emptyset$ 
and let $t \in \R$. 
If $\mathcal{V}_t(u) \not=\{0\}$, 
each function $F \in \mathsf{F}(\mathcal{V}_t(u))$ 
extends to a function on $\C_+ \cup D$ 
and is meromorphic on $D \cap \C_-$. 
If $u$ is holomorphic in a neighborhood of the interval $(a,b) \subset \R$, 
all $F \in \mathsf{F}(\mathcal{V}_t(u))$ are holomorphic there. 
Moreover, $\langle f, \overline{Y_z^t} \rangle=(\mathsf{F}f)(z)$ 
holds for all $f \in \mathcal{V}_t(u)$ and $z \in \C_+ \cup D$. 
\end{lemma}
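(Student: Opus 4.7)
The plan is to extend each $F = \mathsf{F}f$ from $\C_+$ across $\R$ using the functional equation $(\mathsf{F}\mathsf{K}f)(z) = u(z)(\mathsf{F}f)^\sharp(z)$, valid on $\R$. Since $f \in \mathcal{V}_t(u)$ forces both $f$ and $\mathsf{K}f$ to lie in $L^2(t,\infty)$, the functions $\mathsf{F}f$ and $\mathsf{F}\mathsf{K}f$ both belong to $e^{itz}H^2(\C_+)$. For $z \in D \cap \C_-$ one defines
\[
F(z) := \frac{(\mathsf{F}\mathsf{K}f)^\sharp(z)}{u^\sharp(z)},
\]
observing that $u^\sharp(z) = \overline{u(\bar z)}$ is meromorphic on $D \cap \C_-$ by (U2) and $(\mathsf{F}\mathsf{K}f)^\sharp$ is holomorphic on $\C_-$; this yields a meromorphic function on $D \cap \C_-$ whose possible poles lie over the zeros of $u$ in $\C_+$. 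A computation of non-tangential limits on $\R$, using $|u|=1$ so that $\overline{u}=1/u$ there, shows that the two pieces glue into a well-defined function on $\C_+ \cup D$ with a.e.\ coincident boundary values.

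For the holomorphy statement, assume $u$ is holomorphic on a neighbourhood $V$ of $(a,b)$. Because $|u|=1$ on $V \cap \R$, $u$ is nonvanishing there, and Schwarz reflection yields $u \cdot u^\sharp \equiv 1$ on a symmetric sub-neighbourhood; in particular $u^\sharp$ is holomorphic and nonvanishing on $V$, so $F$ is separately holomorphic on $V \cap \C_+$ and $V \cap \C_-$ with matching a.e.\ boundary values on $V \cap \R$. The proof concludes by a reflection/Morera-type argument: regarding $F$ as an element of $L^1_{\rm loc}(V)$, one tests $\partial F/\partial \bar z$ against a smooth compactly supported form in $V$ and integrates by parts on each half-plane piece to show the distributional $\bar\partial F$ vanishes (the jump across $V \cap \R$ is zero), then invokes hypoellipticity of $\partial/\partial\bar z$ to upgrade $F$ to a genuine holomorphic function on $V$.

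For the reproducing-kernel identity, the case $z \in \C_+$ is Lemma \ref{lem_5_3}. For $z \in D \cap \C_-$, the functional $f \mapsto F(z)$ on $\mathcal{V}_t(u)$ is continuous: $\mathsf{K}$ is an isometry of $\mathcal{V}_t(u)$, and evaluation $g \mapsto \mathsf{F}g(\bar z)$ is bounded on $L^2(t,\infty)$ for $\bar z \in \C_+$. The Riesz representation theorem then produces $Y_z^t \in \mathcal{V}_t(u)$ with $\langle f, \overline{Y_z^t}\rangle = \mathsf{F}f(z)$. An explicit formula $Y_z^t = \overline{\mathsf{K}\,\overline{Y_{\bar z}^t}}/u^\sharp(z)$ drops out by applying Lemma \ref{lem_5_3} to $\mathsf{K}f$ at $\bar z$ and invoking self-adjointness of $\mathsf{K}$. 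The main obstacle is the previous step: converting the a.e.\ matching of two $L^2$ boundary values into a genuine holomorphic extension across $(a,b)$, which for merely $L^2$ data requires the distributional $\bar\partial$ argument rather than elementary Schwarz reflection.
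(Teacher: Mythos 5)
Your proposal is correct and follows essentially the same route as the paper: both rest on writing $F=\mathsf{F}\mathsf{K}(\mathsf{K}f)=\mathsf{M}_u\,\mathsf{F}\mathsf{J}_\sharp\mathsf{K}f$, so that the lower-half-plane extension is the Hardy-class function $(\mathsf{F}\mathsf{K}f)^\sharp$ multiplied by the meromorphic continuation of $u$ (your division by $u^\sharp$ is the same thing, since $u=1/u^\sharp$ off $\R$), followed by gluing across any interval where $u$ is holomorphic and by the Riesz representation theorem for the kernel. You are in fact more careful than the paper about the gluing step, spelling out the distributional $\bar\partial$/Morera argument that the paper leaves implicit. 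The only omission is the reproducing identity for $z\in\R\subset D$: the paper obtains continuity of $f\mapsto(\mathsf{F}f)(z)$ for almost every real $z$ by the Banach--Steinhaus theorem applied to the evaluations from $\C_+$ and $\C_-\cap D$, whereas your argument covers only $z\in\C_+$ and $z\in D\cap\C_-$; this is easily patched but should be stated.
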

\begin{proof} Let $F=\mathsf{F}f$ for $f\in\mathcal{V}_t(u)$. Then 
$F=\mathsf{F}\mathsf{K}(\mathsf{K}f)
=\mathsf{M}_u\mathsf{J}^\sharp\mathsf{F}\mathsf{K}f
=\mathsf{M}_u\mathsf{F}\mathsf{J}_\sharp\mathsf{K}f$. 
On the right-hand side, $\mathsf{F}\mathsf{J}_\sharp\mathsf{K}f$ 
is defined and $F \mapsto F(z)$ is continuous on $\mathsf{F(}\mathcal{V}_t(u))$ for $z \in \C_- \cup \R$, 
since $\mathsf{J}_\sharp\mathsf{K}f \in L^2(-\infty,-t)$. 
We have $\lim_{z \to x}u(z)=u(x)$ and 
$\lim_{z \to x}(\mathsf{F}f)(z)=(\mathsf{F}f)(x)$ 
for almost all $x \in \R$ 
if $z$ tends to $x$ non-tangentially inside $\C_+$ and $\C_-$. 
Hence $F$ is holomorphic in a neighborhood of $(a,b) \subset \R$ 
if $u$ is holomorphic there. 
The evaluation $f \mapsto \mathsf{F}f(z)$ 
is continuous for $z \in \C_+$ and $z \in \C_-\cap D$, 
and therefore it is also continuous for almost all $z \in \R$ 
by the Banach--Steinhaus theorem. 
Hence there exists $Y_z^t \in \mathcal{V}_t(u)$ 
such that $\langle f, \overline{Y_z^t} \rangle=(\mathsf{F}f)(z)$ 
for $z \in \C_-$ and for almost all $z \in \R$. 
\end{proof}

\begin{lemma} \label{lem_5_5}
Let $t \in \R$. 
Suppose that $\Vert \mathsf{K}[t] \Vert_{\rm op} < 1$. 
Then, 
\begin{equation} \label{c_507}
\mathcal{V}_t(u)^{\perp} = L^2(-\infty,t) + \mathsf{K}(L^2(-\infty,t)). 
\end{equation}
\end{lemma}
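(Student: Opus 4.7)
The plan is to derive \eqref{c_507} from two well-known facts combined with the spectral gap supplied by (K1): first, that the orthogonal complement of an intersection of closed subspaces is the closure of the sum of their complements; second, that under the strict contraction hypothesis on $\mathsf{K}[t]$, that sum is already closed without taking a closure.

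For the first step, I would start from the definition $\mathcal{V}_t(u) = L^2(t,\infty) \cap \mathsf{K}L^2(t,\infty)$ and compute the orthogonal complements of the two factors. Clearly $L^2(t,\infty)^\perp = L^2(-\infty,t)$. For $\mathsf{K}L^2(t,\infty)$, the point is to use that $\mathsf{K}$ is an antilinear self-adjoint involution by Proposition \ref{prop_2_1}, so for $f \in L^2(\R)$ and $g \in L^2(t,\infty)$ one has $\langle f,\mathsf{K}g\rangle = \overline{\langle \mathsf{K}g,f\rangle} = \overline{\langle \mathsf{K}f,g\rangle}$; hence $f \perp \mathsf{K}L^2(t,\infty)$ iff $\mathsf{K}f \in L^2(-\infty,t)$ iff $f \in \mathsf{K}L^2(-\infty,t)$ (using $\mathsf{K}^2=1$). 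Combining these with the standard identity $(A\cap B)^\perp = \overline{A^\perp + B^\perp}$ gives
\[
\mathcal{V}_t(u)^\perp = \overline{L^2(-\infty,t) + \mathsf{K}L^2(-\infty,t)}.
\]

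The second and more substantive step is to show that this sum is already closed under (K1). Here is the key observation: for any $v \in L^2(-\infty,t)$ one has $\mathsf{P}_t v = v$, so $\mathsf{P}_t\mathsf{K}v = \mathsf{K}[t]v$, and since $\mathsf{K}$ is isometric,
\[
\Vert (1-\mathsf{P}_t)\mathsf{K}v\Vert^2
= \Vert \mathsf{K}v\Vert^2 - \Vert \mathsf{P}_t\mathsf{K}v\Vert^2
= \Vert v\Vert^2 - \Vert \mathsf{K}[t]v\Vert^2
\geq (1 - c^2)\Vert v\Vert^2,
\]
where $c = \Vert \mathsf{K}[t]\Vert_{\rm op} < 1$. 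Thus the linear map $v \mapsto (1-\mathsf{P}_t)\mathsf{K}v$ from $L^2(-\infty,t)$ into $L^2(t,\infty)$ is bounded below, and hence its range, call it $W$, is a closed subspace of $L^2(t,\infty)$. Since every element of $L^2(-\infty,t) + \mathsf{K}L^2(-\infty,t)$ can be decomposed according to the orthogonal splitting $L^2(\R) = L^2(-\infty,t) \oplus L^2(t,\infty)$ as $f + \mathsf{P}_t\mathsf{K}g + (1-\mathsf{P}_t)\mathsf{K}g$, with the first two summands in $L^2(-\infty,t)$ and the last in $W$, one has
\[
L^2(-\infty,t) + \mathsf{K}L^2(-\infty,t) = L^2(-\infty,t) \oplus W,
\]
an orthogonal direct sum of two closed subspaces, therefore itself closed.

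Combining the two steps removes the closure in the first display and yields \eqref{c_507}. The main obstacle is genuinely the second step: without the gap $\Vert \mathsf{K}[t]\Vert_{\rm op}<1$ the map $v \mapsto (1-\mathsf{P}_t)\mathsf{K}v$ would not be bounded below and one could only assert the equality up to closure, so the role of (K1) must be tracked carefully through the quadratic identity above.
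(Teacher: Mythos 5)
Your proof is correct: the identity $(A\cap B)^\perp=\overline{A^\perp+B^\perp}$ together with the computation $(\mathsf{K}L^2(t,\infty))^\perp=\mathsf{K}L^2(-\infty,t)$ (via antilinear self-adjointness and $\mathsf{K}^2=1$) reduces everything to closedness of the sum, and your lower bound $\Vert(1-\mathsf{P}_t)\mathsf{K}v\Vert^2=\Vert v\Vert^2-\Vert\mathsf{K}[t]v\Vert^2\geq(1-c^2)\Vert v\Vert^2$ establishes that correctly. The paper itself only cites the analogous lemma in \cite{Su19_1}, whose argument uses the hypothesis $\Vert\mathsf{K}[t]\Vert_{\rm op}<1$ in exactly this way, so your write-up supplies the omitted details along essentially the same route.
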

\begin{proof} 
It is proved by almost the same argument as the proof of \cite[Lemma 4.2]{Su19_1}. 
\end{proof}

\begin{proposition} \label{prop_5_6}
Let $t \in \R$ and let $e_z(x)=\exp(izx)$ for  $z \in \C_+$. 
Suppose that 
$\Vert \mathsf{K}[t] \Vert_{\rm op} < 1$ and $\mathcal{V}_t(u) \not=\{0\}$ . 
Then the equations 
\begin{equation} \label{c_508}
(a_z^t-\mathsf{P}_te_z) + \mathsf{K}\mathsf{P}_t(a_z^t-\mathsf{P}_te_z) 
= (1-\mathsf{P}_t)e_z + \mathsf{K}(1-\mathsf{P}_t)e_z, 
\end{equation}
\begin{equation} \label{c_509}
(b_z^t-\mathsf{P}_te_z) - \mathsf{K}\mathsf{P}_t (b_z^t-\mathsf{P}_te_z) 
= (1-\mathsf{P}_t)e_z - \mathsf{K}(1-\mathsf{P}_t)e_z
\end{equation}
for functions $a_z^t=a_z^t(x)$ and $b_z^t=b_z^t(x)$ on $\R$ 
have unique solutions with conditions  
$a_z^t-\mathsf{P}_te_z \in L^2(\R)$ 
and $b_z^t-\mathsf{P}_te_z \in L^2(\R)$. Moreover, 
\begin{equation} \label{c_510}
Y_z^t = (1-\mathsf{P}_t)\frac{1}{2}(a_z^t+b_z^t). 
\end{equation}
\end{proposition}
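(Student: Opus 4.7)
The plan is to split the proof into two tasks: establishing existence and uniqueness of $a_z^t$ and $b_z^t$ in the affine class $\mathsf{P}_t e_z + L^2(\R)$, and then identifying the right-hand side of \eqref{c_510} as the orthogonal projection of $(1-\mathsf{P}_t)e_z$ onto $\mathcal{V}_t(u)$, after which Lemma \ref{lem_5_3} yields the formula.

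For existence and uniqueness, set $\alpha := a_z^t - \mathsf{P}_t e_z$. Because $(1-\mathsf{P}_t)e_z \in L^2(t,\infty)$ for $z \in \C_+$ and $\mathsf{K}$ is $L^2$-isometric, the right-hand side of \eqref{c_508} lies in $L^2(\R)$. Applying $\mathsf{P}_t$ reduces the equation to $(1 + \mathsf{K}[t])\mathsf{P}_t\alpha = \mathsf{P}_t\mathsf{K}(1-\mathsf{P}_t)e_z$ in $L^2(-\infty,t)$, which has a unique ($\R$-linear) solution because $\Vert \mathsf{K}[t] \Vert_{\rm op}<1$, by the same invertibility argument used for \eqref{c_203}--\eqref{c_204}. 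The complementary piece $(1-\mathsf{P}_t)\alpha$ is then recovered directly from the original equation, so $\alpha \in L^2(\R)$ exists uniquely; the parallel reduction with $1 - \mathsf{K}[t]$ handles \eqref{c_509}.

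The identity \eqref{c_510} will follow from two observations combined with Lemma \ref{lem_5_5}. The key step is to apply $\mathsf{K}$ to both sides of \eqref{c_508}: using antilinearity together with $\mathsf{K}^2 = 1$, the left-hand side becomes $\mathsf{K}\alpha + \mathsf{P}_t\alpha$, whereas the right-hand side $(1-\mathsf{P}_t)e_z + \mathsf{K}(1-\mathsf{P}_t)e_z$ is manifestly $\mathsf{K}$-invariant. Subtracting the original equation yields $(1-\mathsf{P}_t)\alpha = \mathsf{K}(1-\mathsf{P}_t)\alpha$, so $(1-\mathsf{P}_t)a_z^t$ has its support and that of its $\mathsf{K}$-image in $[t,\infty)$, i.e., it lies in $\mathcal{V}_t(u)$. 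Running the same argument on \eqref{c_509} gives $\mathsf{K}(1-\mathsf{P}_t)b_z^t = -(1-\mathsf{P}_t)b_z^t$, which likewise forces $(1-\mathsf{P}_t)b_z^t \in \mathcal{V}_t(u)$; the half-sum therefore belongs to $\mathcal{V}_t(u)$.

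It remains to check that the residual lies in $\mathcal{V}_t(u)^\perp$. Adding \eqref{c_508} to \eqref{c_509} gives $a_z^t + b_z^t = 2e_z - \mathsf{K}\mathsf{P}_t(a_z^t - b_z^t)$, so
\[
(1-\mathsf{P}_t)e_z - (1-\mathsf{P}_t)\tfrac{1}{2}(a_z^t + b_z^t)
= \tfrac{1}{2}\bigl[\mathsf{K}\mathsf{P}_t(a_z^t - b_z^t) - \mathsf{P}_t\mathsf{K}\mathsf{P}_t(a_z^t - b_z^t)\bigr],
\]
which sits in $\mathsf{K}L^2(-\infty,t) + L^2(-\infty,t) = \mathcal{V}_t(u)^\perp$ by Lemma \ref{lem_5_5}. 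The two steps together identify $(1-\mathsf{P}_t)\frac{1}{2}(a_z^t + b_z^t)$ as the orthogonal projection of $(1-\mathsf{P}_t)e_z$ onto $\mathcal{V}_t(u)$, hence as $Y_z^t$ by Lemma \ref{lem_5_3}. The delicate point to watch throughout is the bookkeeping around the antilinearity of $\mathsf{K}$: it is precisely $\mathsf{K}^2 = 1$, rather than any conjugation phenomenon appearing on the right-hand side, that makes each equation $\mathsf{K}$-invariant and produces the clean $\pm 1$-eigenvalue splitting of $(1-\mathsf{P}_t)a_z^t$ and $(1-\mathsf{P}_t)b_z^t$ under $\mathsf{K}$.
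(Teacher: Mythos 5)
Your proposal is correct and follows essentially the same route as the paper: the same reduction of \eqref{c_508}--\eqref{c_509} to the invertibility of $1\pm\mathsf{K}[t]$ on $L^2(-\infty,t)$, and the same identification of $(1-\mathsf{P}_t)\tfrac12(a_z^t+b_z^t)$ as the orthogonal projection of $(1-\mathsf{P}_t)e_z$ onto $\mathcal{V}_t(u)$ via Lemmas \ref{lem_5_3} and \ref{lem_5_5}. The only (cosmetic) difference is that you establish membership in $\mathcal{V}_t(u)$ by exhibiting $(1-\mathsf{P}_t)a_z^t$ and $(1-\mathsf{P}_t)b_z^t$ as $\pm1$ fixed points of $\mathsf{K}$, whereas the paper works with the sum and difference $U_z^t$, $V_z^t$ and shows $\mathsf{K}(1-\mathsf{P}_t)V_z^t=(1-\mathsf{P}_t)U_z^t$; both computations rest on the same use of additivity of $\mathsf{K}$ and $\mathsf{K}^2=1$.
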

\begin{remark} 
If $\mathsf{K}e_z$ makes sense, 
equations \eqref{c_508} and \eqref{c_509} are simplified as 
\begin{equation*} 
a_z^t+ \mathsf{K}\mathsf{P}_ta_z^t
= e_z + \mathsf{K}e_z, 
\qquad 
b_z^t- \mathsf{K}\mathsf{P}_t b_z^t
= e_z - \mathsf{K}e_z, 
\end{equation*}
and their solutions 
are considered according to the class of $\mathsf{K}e_z$.
\end{remark}
\begin{proof} Since $(1-\mathsf{P}_t)e_z \in L^2(\R)$ for $z \in \C_+$, 
$\mathsf{K}(1-\mathsf{P}_t)e_z$ belongs to $L^2(\R)$. 
Therefore, \eqref{c_508} and \eqref{c_509} 
are equations for functions 
$(a_z^t-\mathsf{P}_te_z)$ and $(b_z^t-\mathsf{P}_te_z)$ in $L^2(\R)$.  
Multiplying by $\mathsf{P}_t$ on both sides 
and then substituting the obtained formulas for $\mathsf{P}_t(a_z^t-\mathsf{P}_te_z)$ 
and $\mathsf{P}_t(b_z^t-\mathsf{P}_te_z)$ into \eqref{c_508} and \eqref{c_509}, 
we find that   
\begin{equation} \label{c_511}
\aligned 
a_z^t &= e_z +\mathsf{K}(1-\mathsf{P}_t)e_z - \mathsf{K}(1+\mathsf{K}[t])^{-1}\mathsf{P}_t\mathsf{K}(1-\mathsf{P}_t)e_z, \\
b_z^t &= e_z -\mathsf{K}(1-\mathsf{P}_t)e_z - \mathsf{K}(1-\mathsf{K}[t])^{-1}\mathsf{P}_t\mathsf{K}(1-\mathsf{P}_t)e_z
\endaligned 
\end{equation}
are unique solutions of \eqref{c_508} and \eqref{c_509} 
with conditions $a_z^t-\mathsf{P}_te_z, \, b_z^t-\mathsf{P}_te_z \in L^2(\R)$. 
Formulas in \eqref{c_511} show that 
both $(1-\mathsf{P}_t)a_z^t$ and $(1-\mathsf{P}_t)b_z^t$ belong to $L^2(t,\infty)$.  

Let us prove the formula \eqref{c_510}. 
By \eqref{c_507}, there exists unique vectors $u_z^t$ and $v_z^t$ in $L^2(-\infty,t)$ such that 
\begin{equation} \label{c_512}
(1-\mathsf{P}_t)e_z=Y_z^t + u_z^t+\mathsf{K}v_z^t. 
\end{equation}
Put $U_z^t=(a_z^t+b_z^t)/2 -\mathsf{P}_te_z$ and $V_z^t=(a_z^t-b_z^t)/2$. 
Then, $U_z^t + \mathsf{K}\mathsf{P}_tV_z^t=(1-\mathsf{P}_t)e_z$ and 
$V_z^t + \mathsf{K}\mathsf{P}_tU_z^t=\mathsf{K}(1-\mathsf{P}_t)e_z$ 
by \eqref{c_508} and \eqref{c_509}. By multiplying $\mathsf{K}$ on both sides of the second equation,
$(1-\mathsf{P}_t)e_z = \mathsf{K}(1-\mathsf{P}_t)V_z^t + \mathsf{P}_tU_z^t + \mathsf{K}\mathsf{P}_tV_z^t$. 
Therefore, $Y_z^t=\mathsf{K}(1-\mathsf{P}_t)V_z^t$, 
$u_z^t=\mathsf{P}_tU_z^t$ and $v_z^t=\mathsf{P}_tV_z^t$. 
Moreover, 
\[
\aligned 
Y_z^t
& =\mathsf{K}(1-\mathsf{P}_t)V_z^t
=\mathsf{K}V_z^t-\mathsf{K}\mathsf{P}_tV_z^t \\
& =((1-\mathsf{P}_t)e_z -\mathsf{P}_tU_z^t) + (U_z^t - (1-\mathsf{P}_t)e_z) = (1-\mathsf{P}_t)U_z^t . 
\endaligned 
\]
Hence, $Y_z^t$ belongs to $\mathcal{V}_t(u)$ 
and formula \eqref{c_510} follows from the uniqueness of decomposition \eqref{c_512}.
\end{proof}

\begin{proposition} \label{prop_5_8}
Let $t \in \R$ and $z \in \C_+$. 
Suppose that $\Vert \mathsf{K}[t] \Vert_{\rm op}<1$ 
and that (O3), (O4) are satisfied. 
Then the following equality holds: 
\begin{equation} \label{c_513}
\aligned
\left( \frac{\partial}{\partial x}  -iz \right) & 
(a_z^t(x)+ b_z^t(x)) \\
& = \frac{1}{2} \Bigl(
(a_z^t(t)+ b_z^t(t))-\overline{(a_z^t(t)-b_z^t(t))}
\Bigr) \phi^+(t,x) \\
& \quad  
- \frac{1}{2} \Bigl(
(a_z^t(t)+ b_z^t(t))+\overline{(a_z^t(t)-b_z^t(t))}
\Bigr) \phi^-(t,x). 
\endaligned 
\end{equation}
\end{proposition}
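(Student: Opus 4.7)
The plan is to express $F_+ := (\partial/\partial x - iz)(a_z^t + b_z^t)$ as a linear combination of $\phi^+(t,x)$ and $\phi^-(t,x)$ by reducing to the hypotheses of Proposition \ref{prop_5_2}. The obstruction to a direct computation is the antilinearity of $\mathsf{K}$: since $\mathsf{K}(izf) = -i\bar z\mathsf{K}f$, combining with Lemma \ref{lem_4_3} yields
\[
(\partial/\partial x - iz)\mathsf{K} = -\mathsf{K}(\partial/\partial x - i\bar z), \qquad (\partial/\partial x - i\bar z)\mathsf{K} = -\mathsf{K}(\partial/\partial x - iz),
\]
so pushing $(\partial/\partial x - iz)$ through $\mathsf{K}$ produces the \emph{conjugate}-parameter operator. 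I therefore carry along the auxiliary object $F_-' := (\partial/\partial x - i\bar z)(a_z^t - b_z^t)$ in parallel with $F_+$.

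First I would form the sum and difference of \eqref{c_508} and \eqref{c_509}, obtaining
\[
(a_z^t + b_z^t) = 2 e_z - \mathsf{K}\mathsf{P}_t(a_z^t - b_z^t),
\]
\[
(a_z^t - b_z^t) = 2\mathsf{K}(1-\mathsf{P}_t)e_z + 2\mathsf{K}\mathsf{P}_t e_z - \mathsf{K}\mathsf{P}_t(a_z^t + b_z^t),
\]
keeping the last two terms on the right separate since $\mathsf{K}e_z$ need not lie in $S'(\R)$ when $\Im(z)>0$. Applying $(\partial/\partial x - iz)$ to the first identity and $(\partial/\partial x - i\bar z)$ to the second, and using $(\partial/\partial x - iz)e_z = 0$, the elementary consequences $(\partial/\partial x - iz)\mathsf{P}_t e_z = -e_z(t)\delta_t$, $(\partial/\partial x - iz)(1-\mathsf{P}_t)e_z = e_z(t)\delta_t$, and the Leibniz-type rule $(\partial/\partial x - iz)\mathsf{P}_t f = \mathsf{P}_t(\partial/\partial x - iz)f - f(t)\delta_t$ coming from \eqref{c_404}, and writing $A = a_z^t(t)$, $B = b_z^t(t)$, one arrives at the coupled system
\begin{align*}
F_+ - \mathsf{K}\mathsf{P}_t F_-' &= -\overline{(A-B)}\,\mathsf{K}\delta_t,\\
F_-' - \mathsf{K}\mathsf{P}_t F_+ &= -\overline{(A+B)}\,\mathsf{K}\delta_t.
\end{align*}
The two contributions $\mp 2\overline{e_z(t)}\,\mathsf{K}\delta_t$ arising from the $\mathsf{K}(1-\mathsf{P}_t)e_z$ and $\mathsf{K}\mathsf{P}_t e_z$ terms in the second identity cancel exactly; this cancellation is precisely what makes both right-hand sides depend only on $A$ and $B$.

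Adding and subtracting these two equations yields
\[
(F_+ + F_-') - \mathsf{K}\mathsf{P}_t(F_+ + F_-') = -2\overline{A}\,\mathsf{K}\delta_t, \qquad (F_+ - F_-') + \mathsf{K}\mathsf{P}_t(F_+ - F_-') = 2\overline{B}\,\mathsf{K}\delta_t,
\]
to which Proposition \ref{prop_5_2} applies directly, with real-imaginary parts $(b_0,b_1) = (-2\Re A, 2\Im A)$ and $(a_0,a_1) = (2\Re B, -2\Im B)$. This produces $F_+ + F_-' = -2\Re(A)\phi^- + 2i\Im(A)\phi^+$ and $F_+ - F_-' = 2\Re(B)\phi^+ - 2i\Im(B)\phi^-$; adding and dividing by two gives $F_+ = [\Re(B) + i\Im(A)]\phi^+ - [\Re(A) + i\Im(B)]\phi^-$, and the elementary identities $\Re(B)+i\Im(A) = \tfrac{1}{2}((A+B)-\overline{(A-B)})$ and $\Re(A)+i\Im(B) = \tfrac{1}{2}((A+B)+\overline{(A-B)})$ deliver \eqref{c_513}. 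The principal obstacle is organizational: antilinearity forces a coupling between $F_+$ and the conjugate-parameter object $F_-'$, and only after taking the sum and difference of the two coupled identities do we obtain equations matching the hypotheses of Proposition \ref{prop_5_2}; the appearance of $\overline{A}$ and $\overline{B}$ on the right-hand sides (rather than $A$, $B$) is a direct residue of this antilinearity, and is exactly what causes the final coefficients to take the asymmetric real-plus-imaginary form of \eqref{c_513}.
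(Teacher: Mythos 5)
Your proof is correct: I checked the coupled system, the cancellation of the two $\mp 2\overline{e_z(t)}\,\mathsf{K}\delta_t$ contributions, the signs in the application of Proposition \ref{prop_5_2}, and the final coefficient identities, and everything holds. The overall strategy is the one the paper uses --- differentiate the sum and difference of \eqref{c_508} and \eqref{c_509}, use \eqref{c_403} and \eqref{c_404} to produce the $\mathsf{K}\delta_t$ boundary terms, and identify the result with a combination of $\phi^\pm$ by uniqueness --- but your organization is genuinely different and cleaner. The paper eliminates the auxiliary quantity from the start and derives the single equation \eqref{c_517} for $(1-\mathsf{K}\mathsf{P}_t\mathsf{K}\mathsf{P}_t)(\partial/\partial x - iz)(a_z^t+b_z^t)$, which costs a lengthy computation involving products of $\delta_t$ with $\mathsf{K}\mathsf{P}_t\mathsf{K}\mathsf{P}_t(\cdots)$, and then matches against the corresponding ``squared'' equations for $\phi^\pm$, invoking (K4) via the factorization $(1-\mathsf{K}\mathsf{P}_t\mathsf{K}\mathsf{P}_t)=(1-\mathsf{K}\mathsf{P}_t)(1+\mathsf{K}\mathsf{P}_t)$. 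You instead keep the first-order antilinear structure, carrying the conjugate-parameter object $F_-'$, and decouple by sum and difference into equations that are literally the hypotheses of Proposition \ref{prop_5_2}; substituting one of your two coupled equations into the other recovers \eqref{c_517}, so the arguments are equivalent, but yours is shorter and makes the source of the conjugations $\overline{A}$, $\overline{B}$ (the antilinearity of $\mathsf{K}$ together with the conjugate spectral parameter $\bar z$) transparent. The only point I would make explicit in a write-up: for $z\in\C_+$ neither $e_z$ nor $\mathsf{P}_te_z$ is a tempered distribution, so the two identities you differentiate must be read as rearrangements of \eqref{c_514} and \eqref{c_515}; your parenthetical remark shows you are aware of this, and since $(\partial/\partial x - iz)$ annihilates $e_z$ and sends $\mathsf{P}_te_z$ to $-e_z(t)\delta_t$, every object you actually manipulate does lie in $S'(\R)$, so this is a matter of presentation rather than a gap.
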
  
\begin{proof} 
Equation \eqref{c_513} is proved by showing that both sides satisfy the same equation 
having a unique solution. 
By taking the sum and difference of \eqref{c_508} and \eqref{c_509}, 
we obtain
\begin{equation} \label{c_514} 
(a_z^t + b_z^t-2\mathsf{P}_te_z) 
+ \mathsf{K}\mathsf{P}_t (a_z^t - b_z^t) 
= 2(1-\mathsf{P}_t)e_z, 
\end{equation}
\begin{equation} \label{c_515} 
(a_z^t - b_z^t) + \mathsf{K}\mathsf{P}_t (a_z^t + b_z^t - 2\mathsf{P}_te_z) 
= 2 \, \mathsf{K}(1-\mathsf{P}_t)e_z, 
\end{equation}
respectively. 
Substituting $(a_z^t + b_z^t - 2\mathsf{P}_te_z)$ 
with $(a_z^t + b_z^t -2 e_z)$ 
in \eqref{c_515}, since it doesn't change the equation, 
then multiplying by $\mathsf{K}\mathsf{P}_t$ and subtracting from \eqref{c_514} yields
\begin{equation} \label{c_516}
(a_z^t + b_z^t) 
- \mathsf{K}\mathsf{P}_t \mathsf{K}\mathsf{P}_t (a_z^t + b_z^t - 2e_z) 
= 2e_z - 2 \mathsf{K}\mathsf{P}_t \mathsf{K}(1-\mathsf{P}_t)e_z.
\end{equation}
Using \eqref{c_403} and \eqref{c_404}, repeatedly, 
the derivative of the second term of the left-hand side 
and the derivative of  the right-hand side 
with respect to $x$ are calculated as 
\[
\aligned 
\frac{\partial}{\partial x} &
\mathsf{K}\mathsf{P}_t \mathsf{K}\mathsf{P}_t (a_z^t + b_z^t - 2e_z) \\
&= \mathsf{K}\delta_t 
\mathsf{P}_t \mathsf{K}\mathsf{P}_t (a_z^t + b_z^t - 2e_z) 
-\mathsf{K}\mathsf{P}_t \mathsf{K}
\delta_t (a_z^t + b_z^t - 2e_z) \\
& \quad +\mathsf{K}\mathsf{P}_t \mathsf{K}
\mathsf{P}_t \frac{\partial}{\partial x} (a_z^t + b_z^t - 2e_z)
\endaligned 
\]
and
\[
\aligned 
\frac{\partial}{\partial x}
\Bigl(
2e_z
-
2\mathsf{K}\mathsf{P}_t \mathsf{K}(1-\mathsf{P}_t)e_z
\Bigr)
& =  2iz ( e_z - \mathsf{K}\mathsf{P}_t \mathsf{K}(1-\mathsf{P}_t)e_z ) \\
& \quad - 2\mathsf{K}\mathsf{P}_t \mathsf{K}\delta_t e_z 
- 2\mathsf{K}(\delta_t\mathsf{K}(1-\mathsf{P}_t)e_z), 
\endaligned 
\]
respectively. Therefore, 
\[
\aligned 
\frac{\partial}{\partial x}(a_z^t + b_z^t) 
& - \mathsf{K}\mathsf{P}_t \mathsf{K}\mathsf{P}_t \frac{\partial}{\partial x}(a_z^t + b_z^t - 2e_z) \\
& = 2iz \Bigl( e_z - \mathsf{K}\mathsf{P}_t \mathsf{K}(1-\mathsf{P}_t)e_z \Bigr) 
- 2\mathsf{K}\mathsf{P}_t \mathsf{K}\delta_t e_z  
- 2\mathsf{K}(\delta_t\mathsf{K}(1-\mathsf{P}_t)e_z) \\
& \quad + \mathsf{K}\delta_t 
\mathsf{P}_t \mathsf{K}\mathsf{P}_t (a_z^t + b_z^t - 2e_z) 
- \mathsf{K}\mathsf{P}_t \mathsf{K}
\delta_t (a_z^t + b_z^t - 2e_z). 
\endaligned 
\]
Multiplying $(-iz)$ by \eqref{c_516} and then adding to this, 
\[
\aligned 
\left( \frac{\partial}{\partial x}-iz \right) & (a_z^t + b_z^t)
- \mathsf{K}\mathsf{P}_t \mathsf{K}\mathsf{P}_t 
\left( \frac{\partial}{\partial x}-iz \right) (a_z^t + b_z^t-2e_z) \\
& =  
- 2\mathsf{K}\mathsf{P}_t \mathsf{K}\delta_t e_z  
- 2\mathsf{K}(\delta_t\mathsf{K}(1-\mathsf{P}_t)e_z) \\
& \quad + \mathsf{K}\delta_t 
\mathsf{P}_t \mathsf{K}\mathsf{P}_t (a_z^t + b_z^t - 2e_z) 
- \mathsf{K}\mathsf{P}_t \mathsf{K}
\delta_t\mathsf{P}_t (a_z^t + b_z^t - 2e_z). 
\endaligned 
\]
Using \eqref{c_515}, the right-hand side is calculated as 
\[
-\overline{(a_z^t(t) - b_z^t(t))}\mathsf{K}\delta_t 
- (a_z^t(t) + b_z^t(t))
\mathsf{K}\mathsf{P}_t \mathsf{K}\delta_t, 
\]
and $e_z$ in the second term on the left-hand side can be removed, 
since $(\partial/\partial x -iz)e_z=0$. 
Hence we obtain
\begin{equation} \label{c_517}
\aligned 
\left( 1- \mathsf{K}\mathsf{P}_t \mathsf{K}\mathsf{P}_t\right) & 
\left( \frac{\partial}{\partial x}-iz \right) (a_z^t + b_z^t) \\
& = 
-\overline{(a_z^t(t) - b_z^t(t))}\mathsf{K}\delta_t 
- (a_z^t(t) + b_z^t(t))
\mathsf{K}\mathsf{P}_t \mathsf{K}\delta_t. 
\endaligned
\end{equation}

On the other hand, we have 
\[
\phi^+ 
- \mathsf{K}\mathsf{P}_t \mathsf{K}\mathsf{P}_t\phi^+
= \mathsf{K}\delta_t - \mathsf{K}\mathsf{P}_t\mathsf{K}\delta_t, 
\qquad 
\phi^- 
- \mathsf{K}\mathsf{P}_t \mathsf{K}\mathsf{P}_t\phi^-
= \mathsf{K}\delta_t + \mathsf{K}\mathsf{P}_t\mathsf{K}\delta_t
\]
by \eqref{c_501} and \eqref{c_502}. 
By taking a linear combination of these two equations, 
\[
\aligned 
(1 - \mathsf{K}\mathsf{P}_t \mathsf{K}\mathsf{P}_t)&((b_0+ia_1) \phi^+ - (a_0+ib_1)\phi^-) \\
& = - \overline{(a-b)}\mathsf{K}\delta_t - (a+b)
\mathsf{K}\mathsf{P}_t\mathsf{K}\delta_t
\endaligned 
\]
for $a=a_0+ia_1$, $b=b_0+ib_1$ with $a_0,a_1,b_0, b_1 \in \R$. 
Taking $a$ and $b$ as $a_z^t(t)$ and $b_z^t(t)$, 
$((b_0+ia_1) \phi^+ - (a_0+ib_1)\phi^-)$ satisfies the same equation as 
\eqref{c_517}, so (O4) leads \eqref{c_513}, 
since $1 - \mathsf{K}\mathsf{P}_t \mathsf{K}\mathsf{P}_t
=
(1 - \mathsf{K}\mathsf{P}_t)
(1 + \mathsf{K}\mathsf{P}_t)
$. 
\end{proof}

\begin{proposition} \label{prop_5_9}
Let $t \in \R$ and $z \in \C_+$. 
Suppose that $\Vert \mathsf{K}[t] \Vert_{\rm op}<1$ 
and that (O3), (O4) are satisfied. 
Then the following equalities hold: 
\begin{equation} \label{c_518}
\frac{1}{2} \Bigl[
(a_z^t(t)+ b_z^t(t))-\overline{(a_z^t(t)-b_z^t(t))}
\Bigr] = e^{izt} - \int_{t}^{\infty} \overline{\phi^-(t,x)}e^{izx} \, dx, 
\end{equation}
\begin{equation} \label{c_519}
\frac{1}{2} \Bigl[
(a_z^t(t)+ b_z^t(t))+\overline{(a_z^t(t)-b_z^t(t))}
\Bigr] = e^{izt} + \int_{t}^{\infty} \overline{\phi^+(t,x)} e^{izx} \, dx. 
\end{equation}
\end{proposition}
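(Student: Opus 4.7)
The plan is to compute each side of \eqref{c_518}, \eqref{c_519} independently in terms of the truncated integrals
\[
J_\Psi(z) := \int_t^\infty \overline{\Psi(t,x)}\,e^{izx}\,dx, \qquad J_\Phi(z) := \int_t^\infty \overline{\Phi(t,x)}\,e^{izx}\,dx,
\]
and then to match them via an algebraic identity on the boundary data $\Phi(t,t), \Psi(t,t)$.

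For the right-hand sides, I would substitute the explicit formulas \eqref{c_503}, \eqref{c_504} for $\phi^\pm$ into $I^\pm(z) := \int_t^\infty \overline{\phi^\pm(t,x)}\,e^{izx}\,dx$. Writing $A = \Re\Phi(t,t)/\Re(\overline{\Psi(t,t)}\Phi(t,t))$ and $B = \Im\Psi(t,t)/\Re(\overline{\Psi(t,t)}\Phi(t,t))$ (with analogous $A', B'$ for $\phi^-$), complex conjugation gives $\overline{\phi^+} = A\,\partial_x\overline{\Psi} + iB\,\partial_x\overline{\Phi}$. Integration by parts on $(t,\infty)$ is justified because $(1-\mathsf{P}_t)\Psi$ and $(1-\mathsf{P}_t)\Phi$ lie in $L^2(t,\infty)$ while $|e^{izx}|$ decays for $z \in \C_+$, so the boundary at $+\infty$ vanishes. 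The boundary contribution at $x=t$ collapses to exactly $\mp e^{izt}$ thanks to the algebraic identity
\[
A\,\overline{\Psi(t,t)} + iB\,\overline{\Phi(t,t)} = 1,
\]
obtained by expanding the numerator as $\Re\Phi\Re\Psi + \Im\Phi\Im\Psi = \Re(\overline{\Psi(t,t)}\Phi(t,t))$ (the same identity used to confirm that \eqref{c_503} actually solves \eqref{c_501}), together with the companion $iA'\overline{\Psi(t,t)} + B'\overline{\Phi(t,t)} = 1$ for $\phi^-$. One thereby obtains
\[
I^+(z) = -e^{izt} - iz\bigl(A\,J_\Psi(z) + iB\,J_\Phi(z)\bigr), \qquad I^-(z) = e^{izt} + iz\bigl(iA'\,J_\Psi(z) + B'\,J_\Phi(z)\bigr).
\]

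For the left-hand sides, I would rearrange \eqref{c_514}, \eqref{c_515} into the coupled system
\[
(a_z^t+b_z^t) + \mathsf{K}\mathsf{P}_t(a_z^t-b_z^t) = 2 e_z, \qquad \mathsf{K}(a_z^t-b_z^t) + \mathsf{P}_t(a_z^t+b_z^t) = 2 e_z,
\]
where the second follows from the first by applying $\mathsf{K}$ and using $\mathsf{K}^2=1$. Evaluation at $x = t$ (with $\mathsf{P}_t f(t) = 0$) yields $\mathsf{K}(a_z^t-b_z^t)(t) = 2 e^{izt}$ and $(a_z^t+b_z^t)(t) = 2 e^{izt} - \mathsf{K}\mathsf{P}_t(a_z^t-b_z^t)(t)$. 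By Proposition \ref{prop_5_6}, the interior pieces $\mathsf{P}_t(a_z^t - e_z) = (1+\mathsf{K}[t])^{-1}\mathsf{P}_t\mathsf{K}(1-\mathsf{P}_t)e_z$ and $\mathsf{P}_t(b_z^t - e_z) = -(1-\mathsf{K}[t])^{-1}\mathsf{P}_t\mathsf{K}(1-\mathsf{P}_t)e_z$ have the same operator structure as $\mathsf{P}_t\Phi = (1+\mathsf{K}[t])^{-1}\mathsf{P}_t 1$ and $\mathsf{P}_t\Psi = (1-\mathsf{K}[t])^{-1}\mathsf{P}_t 1$ derived from \eqref{c_203}, \eqref{c_204} by projecting. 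Exploiting this parallel together with the uniqueness in Proposition \ref{prop_5_2}, one converts the formulas for $(a_z^t\pm b_z^t)(t)$ into integrals against $\Phi, \Psi$ on $(t,\infty)$, which a second integration by parts of the same type as above reduces to linear combinations of $J_\Psi(z)$ and $J_\Phi(z)$; the same algebraic identity on $\Re(\overline{\Psi(t,t)}\Phi(t,t))$ then aligns the coefficients with those of the $I^\pm$ just computed.

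The hardest step will be this left-hand computation: the boundary values $a_z^t(t), b_z^t(t)$ are accessed only through the integral representations \eqref{c_511}, in which $\mathsf{K}$ is convolution with the tempered distribution $k = \mathsf{F}^{-1}u$, so making sense of the pointwise limits requires careful use of (K3), (K4) combined with the $L^2$-decomposition from Proposition \ref{prop_5_6}. Once both sides are written as the same linear combinations of $J_\Psi(z), J_\Phi(z)$, the identities \eqref{c_518}, \eqref{c_519} follow by taking appropriate half-sums and half-differences.
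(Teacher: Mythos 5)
Your computation of the right-hand sides is fine: substituting \eqref{c_503}--\eqref{c_504}, integrating by parts, and using the identity $\bigl(\Re\Phi(t,t)\bigr)\overline{\Psi(t,t)}+i\bigl(\Im\Psi(t,t)\bigr)\overline{\Phi(t,t)}=\Re\bigl(\overline{\Psi(t,t)}\Phi(t,t)\bigr)$ is exactly the content of Proposition \ref{prop_5_10}, and it correctly turns $e^{izt}\mp\int_t^\infty\overline{\phi^\mp}e^{izx}dx$ into explicit linear combinations of $J_\Psi$ and $J_\Phi$. The genuine gap is the left-hand side, which is the actual substance of Proposition \ref{prop_5_9} and which you dispose of in one sentence ("exploiting this parallel together with the uniqueness in Proposition \ref{prop_5_2}, one converts the formulas for $(a_z^t\pm b_z^t)(t)$ into integrals against $\Phi,\Psi$"). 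The parallel you invoke is between $(1\pm\mathsf{K}[t])^{-1}$ applied to two \emph{different} vectors, $\mathsf{P}_t\mathsf{K}(1-\mathsf{P}_t)e_z$ and $\mathsf{K}[t]1$; to convert a boundary value of one resolvent expression into an integral of the other you need a duality argument, and because $\mathsf{K}[t]$ is antilinear the resolvent $(1\pm\mathsf{K}[t])^{-1}$ is only $\R$-linear and is not self-adjoint in the naive sense, so this step cannot be waved through. This antilinearity is precisely why the statement pairs $(a_z^t+b_z^t)(t)$ with the \emph{conjugate} $\overline{(a_z^t-b_z^t)(t)}$, and your sketch never engages with where that conjugation comes from.

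The paper's proof fills exactly this hole, and by a different mechanism than the one you propose: it never inverts $(1\pm\mathsf{K}[t])$ and never passes through $J_\Phi,J_\Psi$ on the left-hand side. Instead it pairs the defining equations \eqref{c_514}--\eqref{c_515} for $a_z^t\pm b_z^t$ directly against the defining equations \eqref{c_501}--\eqref{c_502} for $\phi^\pm$, using only the self-adjointness $\langle\mathsf{K}f,g\rangle=\langle\mathsf{K}g,f\rangle$ and the convention $f(t)=\langle f,\delta_t\rangle$; a fairly long chain of such pairing identities (\eqref{c_522}--\eqref{c_532}) yields \eqref{c_520}--\eqref{c_521}, i.e.\ $(a_z^t+b_z^t)(t)=2e^{izt}+\int_t^\infty\overline{(\phi^+-\phi^-)}e^{izx}dx$ and $\overline{(a_z^t-b_z^t)(t)}=\int_t^\infty\overline{(\phi^++\phi^-)}e^{izx}dx$, from which the proposition follows by sum and difference. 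If you want to complete your route, you must either reproduce an argument of this type or give an honest adjoint identity for $(1\pm\mathsf{K}[t])^{-1}$ with respect to the sesquilinear pairing; as written, the decisive step is asserted rather than proved.
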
  
\begin{proof} It is sufficient to show that 
\begin{equation} \label{c_520}
(a_z^t(t)+b_z^t(t))
= 2e^{izt} + \int_{t}^{\infty} \overline{(\phi^+(t,x)-\phi^-(t,x))} e^{izx} \, dx, 
\end{equation} 
\begin{equation} \label{c_521}
\overline{(a_z^t(t)-b_z^t(t))}
=  \int_{t}^{\infty} \overline{(\phi^+(t,x)+\phi^-(t,x))} e^{izx} \, dx, 
\end{equation}
because 
\eqref{c_518} and \eqref{c_519} are obtained from 
the difference and sum of \eqref{c_520} and \eqref{c_521}, respectively. 
In this proof, we use the paring symbol 
$\langle f,g \rangle = \int f(x)\overline{g(x)} \, dx$ 
for simplification of the description, if the right-hand side makes sense. 
In particular, we describe the point evaluation of $f $ by 
$f(t)=\langle f , \delta_t \rangle$ and $\overline{f(t)}=\langle \delta_t, f \rangle$. 
Note that $\mathsf{K}$ is self-adjoint with respect to this paring:
\begin{equation} \label{eq_221216_1} 
\langle \mathsf{K}f,g \rangle = \langle \mathsf{K}g,f \rangle.
\end{equation}

First, we prove \eqref{c_520}. 
Taking the pairing of both sides of 
$(a_z^t + b_z^t-2\mathsf{P}_te_z) = 2(1-\mathsf{P}_t)e_z- \mathsf{K}\mathsf{P}_t (a_z^t - b_z^t)$ 
and 
$\mathsf{P}_t\mathsf{K}\delta_t - \mathsf{P}_t\mathsf{K}\mathsf{P}_t\phi_t^+ = \mathsf{P}_t\phi^+$
obtained from \eqref{c_514} and \eqref{c_501}, respectively, we obtain
\[
\langle (a_z^t + b_z^t-2\mathsf{P}_te_z) , 
\mathsf{P}_t(\mathsf{K}\delta_t - \mathsf{K}\mathsf{P}_t\phi^+) \rangle
= 
\langle  2(1-\mathsf{P}_t)e_z - \mathsf{K}\mathsf{P}_t (a_z^t - b_z^t), \mathsf{P}_t\phi^+\rangle. 
\]
Using the orthogonality of 
$\mathsf{P}_t$ and $(1-\mathsf{P}_t)$ on the right-hand side, 
\begin{equation} \label{c_522} 
\aligned 
\langle & (a_z^t + b_z^t -2\mathsf{P}_te_z), \mathsf{P}_t\mathsf{K}\delta_t \rangle
 - \langle (a_z^t + b_z^t -2\mathsf{P}_t), \mathsf{P}_t\mathsf{K}\mathsf{P}_t\phi^+ \rangle \\
& = 
- \langle \mathsf{K}\mathsf{P}_t (a_z^t - b_z^t), \mathsf{P}_t\phi^+\rangle. 
\endaligned 
\end{equation}
By a similar argument, we obtain 
\begin{equation} \label{c_523}
\aligned 
\langle & (a_z^t + b_z^t -2\mathsf{P}_te_z), \mathsf{P}_t\mathsf{K}\delta_t \rangle
 + \langle (a_z^t + b_z^t -2\mathsf{P}_t), \mathsf{P}_t\mathsf{K}\mathsf{P}_t\phi^- \rangle \\ 
& = 
- \langle \mathsf{K}\mathsf{P}_t (a_z^t - b_z^t), \mathsf{P}_t\phi^-\rangle
\endaligned 
\end{equation}
from \eqref{c_502} and \eqref{c_514}. 
On the other hand, paring \eqref{c_501} and the equality  
$(a_z^t - b_z^t) 
= 2 \, \mathsf{K}(1-\mathsf{P}_t)e_z - \mathsf{K}\mathsf{P}_t (a_z^t + b_z^t - 2\mathsf{P}_te_z)$ 
obtained from \eqref{c_515}, 
\begin{equation} \label{c_524}
\aligned 
\langle & \mathsf{P}_t\mathsf{K}\delta_t, (a_z^t - b_z^t) \rangle
 - \langle \mathsf{P}_t\mathsf{K}\mathsf{P}_t\phi^+ , (a_z^t - b_z^t) \rangle \\
& = 2
\langle  
\mathsf{P}_t\phi^+, \mathsf{K}(1-\mathsf{P}_t)e_z \rangle
- \langle \mathsf{P}_t\phi^+,  \mathsf{K}\mathsf{P}_t (a_z^t + b_z^t - 2\mathsf{P}_te_z) \rangle.
\endaligned 
\end{equation}
By a similar argument, we obtain 
\begin{equation} \label{c_525}
\aligned
\langle & \mathsf{P}_t\mathsf{K}\delta_t , (a_z^t - b_z^t) \rangle
 + \langle \mathsf{P}_t\mathsf{K}\mathsf{P}_t\phi^- , (a_z^t - b_z^t) \rangle \\
& = 2
\langle  
\mathsf{P}_t\phi^-, \mathsf{K}(1-\mathsf{P}_t)e_z \rangle
- \langle \mathsf{P}_t\phi^-,  \mathsf{K}\mathsf{P}_t (a_z^t + b_z^t - 2\mathsf{P}_te_z) \rangle
\endaligned 
\end{equation}
from \eqref{c_502} and \eqref{c_515}. 

On the other hand, we have 
\begin{equation} \label{c_526}
(a_z^t + b_z^t)(t) 
= \langle a_z^t + b_z^t, \delta_t \rangle
= 2e_z(t) - \langle \mathsf{P}_t\mathsf{K}\delta_t , (a_z^t - b_z^t) \rangle,
\end{equation}
\begin{equation} \label{c_527}
\overline{(a_z^t - b_z^t)(t)} = 2 \langle \delta_t, \mathsf{K}(1-\mathsf{P}_t)e_z \rangle 
- \langle  (a_z^t + b_z^t -2\mathsf{P}_t), \mathsf{P}_t\mathsf{K}\delta_t \rangle
\end{equation}
by the convention of the symbol, \eqref{c_514}, and \eqref{c_515}. Therefore, 
\begin{equation} \label{c_528}
\aligned 
(a_z^t +b_z^t)(t) & - \overline{(a_z^t-b_z^t)(t)} \\
 &= 2e_z(t) - 2 \langle \delta_t, \mathsf{K}(1-\mathsf{P}_t)e_z \rangle    \\
& \quad 
+ \langle  (a_z^t + b_z^t -2\mathsf{P}_te_z), \mathsf{P}_t\mathsf{K}\delta_t \rangle
- \langle \mathsf{P}_t\mathsf{K}\delta_t , (a_z^t - b_z^t) \rangle.  
\endaligned 
\end{equation}
The right-hand side is equal to 
\begin{equation} \label{c_529}
\aligned 
\, & 2e_z(t) - 2 \langle \delta_t, \mathsf{K}(1-\mathsf{P}_t)e_z \rangle   \\
& \quad 
- \frac{1}{2}\langle \mathsf{K}\mathsf{P}_t (a_z^t - b_z^t), \mathsf{P}_t(\phi^+ + \phi^-)\rangle \\
& \qquad 
+ \frac{1}{2}\langle (a_z^t + b_z^t -2\mathsf{P}_te_z), 
\mathsf{P}_t\mathsf{K}\mathsf{P}_t(\phi^+ - \phi^-) \rangle 
\\
& \quad 
- \langle  \mathsf{P}_t(\phi^+ + \phi^-), \mathsf{K}(1-\mathsf{P}_t)e_z \rangle \\
& \qquad + \frac{1}{2} \langle \mathsf{P}_t(\phi^+ + \phi^-),  
\mathsf{K}\mathsf{P}_t (a_z^t + b_z^t - 2\mathsf{P}_te_z) \rangle \\
& \quad 
- \frac{1}{2} \langle \mathsf{P}_t\mathsf{K}\mathsf{P}_t(\phi^+ - \phi^-) , (a_z^t - b_z^t) \rangle 
\endaligned 
\end{equation}
by using \eqref{c_522} and  \eqref{c_523} to the third term 
and \eqref{c_524} and \eqref{c_525} to the fourth term 
of the right-hand side. 
Using \eqref{eq_221216_1}, 
we find that the sum of the fourth and the sixth term of \eqref{c_529} is equal to 
$\langle (a_z^t + b_z^t -2\mathsf{P}_t), \mathsf{P}_t\mathsf{K}\mathsf{P}_t \phi^+  \rangle$ 
and that the sum of the third and seventh term of \eqref{c_529} is equal to 
$-\langle \mathsf{P}_t\mathsf{K}\mathsf{P}_t \phi^+ ,  (a_z^t - b_z^t) \rangle$. 
Therefore, \eqref{c_528} is 
\begin{equation} \label{c_530}
\aligned 
(a_z^t & +b_z^t)(t)  - \overline{(a_z^t-b_z^t)(t)} \\
& = 2e_z(t) - 2 \langle \delta_t, \mathsf{K}(1-\mathsf{P}_t)e_z \rangle  
- \langle  \mathsf{P}_t(\phi^+ + \phi^-), \mathsf{K}(1-\mathsf{P}_t)e_z \rangle  \\
& \quad 
- \langle \mathsf{P}_t\mathsf{K}\mathsf{P}_t \phi^+ ,  (a_z^t - b_z^t) \rangle 
+ \langle (a_z^t + b_z^t -2\mathsf{P}_te_z), \mathsf{P}_t\mathsf{K}\mathsf{P}_t \phi^+  \rangle.
\endaligned 
\end{equation}
The sum of the fourth and fifth term of the right-hand side is 
equal to 
$\langle \delta_t, \mathsf{K}\mathsf{P}_t (a_z^t + b_z^t -2\mathsf{P}_te_z) \rangle$ 
by \eqref{eq_221216_1} and \eqref{c_522}, 
and that is equal to  
$2 \langle \delta_t, \mathsf{K}(1-\mathsf{P}_t)e_z \rangle 
- \overline{(a_z^t - b_z^t)(t)}$ by \eqref{c_515}. 
As a result, we obtain 
\[
\aligned 
(a_z^t +b_z^t)(t)
& = 2e_z(t)  
- \langle  \mathsf{P}_t(\phi^+ + \phi^-), \mathsf{K}(1-\mathsf{P}_t)e_z \rangle
\endaligned 
\]
from \eqref{c_530}. The second term of the right-hand side 
is calculated as 
\[
\aligned 
\langle \mathsf{P}_t(\phi^+ + \phi^-), \mathsf{K}(1-\mathsf{P}_t)e_z \rangle
& = \langle (1-\mathsf{P}_t)e_z, \mathsf{K}\mathsf{P}_t(\phi^+ + \phi^-)  \rangle \\
& = - \langle (1-\mathsf{P}_t)e_z, (\phi^+ - \phi^-)  \rangle
\endaligned 
\]
by \eqref{eq_221216_1} and 
$(\phi^+ - \phi^-) + \mathsf{K}\mathsf{P}_t(\phi^++\phi^-)= 0$ 
obtained from \eqref{c_501} and \eqref{c_502}. Hence we obtain \eqref{c_520}. 

Next, we prove \eqref{c_521}. 
The following process looks similar to the proof of \eqref{c_520}, 
but actually different. We have 
\begin{equation} \label{c_531}
\aligned 
(a_z^t+b_z^t)(t) & + \overline{(a_z^t-b_z^t)(t)} 
 =  2e_z(t) + 2 \langle \delta_t, \mathsf{K}(1-\mathsf{P}_t)e_z \rangle \\
& \quad 
+ \frac{1}{2}\langle \mathsf{K}\mathsf{P}_t (a_z^t - b_z^t), \mathsf{P}_t(\phi^++\phi^-)\rangle \\
& \qquad 
- \frac{1}{2}\langle (a_z^t + b_z^t -2\mathsf{P}_te_z), \mathsf{P}_t\mathsf{K}\mathsf{P}_t(\phi^+-\phi^-) 
\\
& \quad- \langle  \mathsf{P}_t(\phi^++\phi^-), \mathsf{K}(1-\mathsf{P}_t)e_z \rangle \\
& \quad + \frac{1}{2} \langle \mathsf{P}_t(\phi^++\phi^-), 
\mathsf{K}\mathsf{P}_t (a_z^t + b_z^t - 2\mathsf{P}_te_z) \rangle \\
& \qquad - \frac{1}{2} \langle \mathsf{P}_t\mathsf{K}\mathsf{P}_t(\phi^+-\phi^-) , (a_z^t - b_z^t) \rangle
\endaligned 
\end{equation}
by using \eqref{c_524} and \eqref{c_525} 
on the second term of the right-hand side of \eqref{c_526}, 
and using \eqref{c_522} and \eqref{c_523} 
on the second term of the right-hand side of \eqref{c_527}. 
The right-hand side of \eqref{c_531} is equal to 
\begin{equation} \label{c_532}
\aligned 
2e_z(t) & + 2 \langle \delta_t, \mathsf{K}(1-\mathsf{P}_t)e_z \rangle 
- \langle  \mathsf{P}_t(\phi^++\phi^-), \mathsf{K}(1-\mathsf{P}_t)e_z \rangle \\
&  
+ \langle \mathsf{P}_t\mathsf{K}\mathsf{P}_t \phi^-,  (a_z^t - b_z^t) \rangle
 + \langle
 (a_z^t + b_z^t - 2\mathsf{P}_te_z), \mathsf{P}_t\mathsf{K}\mathsf{P}_t \phi^- \rangle 
\endaligned 
\end{equation}
by \eqref{eq_221216_1} as well as the proof of \eqref{c_520}. 

By taking the paring of 
$\mathsf{P}_t\phi^-$ and $(a_z^t - b_z^t) = 2 \mathsf{K}(1-\mathsf{P}_t)e_z 
- \mathsf{K}\mathsf{P}_t (a_z^t + b_z^t - 2\mathsf{P}_te_z)$ 
obtained from \eqref{c_515}, 
\[
\langle \mathsf{P}_t\phi^-, (a_z^t - b_z^t) \rangle 
= 2 \langle \mathsf{P}_t\phi^-, \mathsf{K}(1-\mathsf{P}_t)e_z \rangle 
- \langle \mathsf{P}_t\phi^-, \mathsf{K}\mathsf{P}_t (a_z^t + b_z^t - 2\mathsf{P}_te_z) \rangle. 
\]
Using \eqref{eq_221216_1} on the second term of the right-hand side, 
the equality becomes  
\[
\langle (a_z^t + b_z^t - 2\mathsf{P}_te_z), \mathsf{P}_t\mathsf{K}\mathsf{P}_t\phi^- \rangle 
= 2 \langle \mathsf{P}_t\phi^-, \mathsf{K}(1-\mathsf{P}_t)e_z \rangle 
- \langle \mathsf{P}_t\phi^-, (a_z^t - b_z^t) \rangle. 
\]
Substituting this in the last term of \eqref{c_532}, 
\[
\aligned 
(a_z^t +b_z^t)(t) & + \overline{(a_z^t-b_z^t)(t)} 
 =  
2e_z(t) + 2 \langle \delta_t, \mathsf{K}(1-\mathsf{P}_t)e_z \rangle  \\
& \quad- \langle  \mathsf{P}_t(\phi^+-\phi^-), \mathsf{K}(1-\mathsf{P}_t)e_z \rangle 
+ \langle \mathsf{P}_t\mathsf{K}\mathsf{P}_t \phi^- - \mathsf{P}_t\phi^-,  (a_z^t - b_z^t) \rangle. 
\endaligned 
\]
The last term of the right-hand side is equal to 
$- \langle \mathsf{K}\mathsf{P}_t(a_z^t - b_z^t), \delta_t   \rangle$ 
by \eqref{c_502} and \eqref{eq_221216_1}. 
Then 
$- \langle \mathsf{K}\mathsf{P}_t (a_z^t - b_z^t), \delta_t \rangle 
=
\langle (a_z^t + b_z^t), \delta_t \rangle 
-  2 \langle e_z, \delta_t \rangle$ 
by arranging the left-hand side of the equality 
$\langle (a_z^t + b_z^t-2\mathsf{P}_te_z), \delta_t \rangle 
-  2 \langle (1-\mathsf{P}_t)e_z, \delta_t \rangle 
= - \langle \mathsf{K}\mathsf{P}_t (a_z^t - b_z^t), \delta_t \rangle 
$
obtained from \eqref{c_514}.
Therefore, 
$
(a_z^t +b_z^t)(t) + \overline{(a_z^t-b_z^t)(t)} 
 =  
 2 \langle \delta_t, \mathsf{K}(1-\mathsf{P}_t)e_z \rangle  
- \langle  \mathsf{P}_t(\phi^+-\phi^-), \mathsf{K}(1-\mathsf{P}_t)e_z \rangle 
+  (a_z^t + b_z^t)(t), 
$
and thus 
\[
\overline{(a_z^t-b_z^t)(t)} 
=  
 2 \langle \delta_t, \mathsf{K}(1-\mathsf{P}_t)e_z \rangle 
- \langle  \mathsf{P}_t(\phi^+-\phi^-), \mathsf{K}(1-\mathsf{P}_t)e_z \rangle . 
\]
The second term of the right-hand side is equal to 
$\langle  (1-\mathsf{P}_t)e_z, \mathsf{K}\mathsf{P}_t(\phi^+-\phi^-)\rangle$, 
and it is further equal to $2 \langle  (1-\mathsf{P}_t)e_z, \mathsf{K}\delta_t \rangle 
- \langle  (1-\mathsf{P}_t)e_z, (\phi^+ + \phi^-) \rangle$, 
since $ \mathsf{K}\mathsf{P}_t(\phi^+-\phi^-)= 2\mathsf{K}\delta_t
- (\phi^+ + \phi^-)$ by
\eqref{c_501} and \eqref{c_502}. 
Hence, we obtain \eqref{c_521}. 
\end{proof}

\begin{proposition} \label{prop_5_10}
Let $t \in \R$ and $z \in \C_+$. 
Suppose that $\Vert \mathsf{K}[t] \Vert_{\rm op}<1$ 
and that (O3), (O4) are satisfied. Then the following equalities hold: 
\begin{equation} \label{c_533}
\aligned 
\frac{1}{2}\left[
e^{izt} + \int_{t}^{\infty} \phi^+(t,x)e^{izx}\,dx 
\right]
& = 
\frac{\Re(\Phi(t,t))}{\Re(\overline{\Psi(t,t)}\Phi(t,t))} \, \tilde{A}(t,z) \\
& \quad  
- \frac{\Im(\Psi(t,t))}{\Re(\overline{\Psi(t,t)}\Phi(t,t))} \cdot i \cdot (-i\tilde{B}(t,z)), 
\endaligned 
\end{equation}
\begin{equation} \label{c_534}
\aligned 
\frac{1}{2}\left[
e^{izt} - \int_{t}^{\infty} \phi^-(t,x)e^{izx}\,dx 
\right]
& = 
- \frac{\Im(\Phi(t,t))}{\Re(\overline{\Psi(t,t)}\Phi(t,t))} \cdot i \cdot \tilde{A}(t,z) \\
& \quad 
+ \frac{\Re(\Psi(t,t))}{\Re(\overline{\Psi(t,t)}\Phi(t,t))} (-i \tilde{B}(t,z)). \hspace{12pt}
\endaligned 
\end{equation}
\end{proposition}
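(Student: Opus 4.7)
The plan is to compute each left-hand side directly by inserting the explicit formulas \eqref{c_503} and \eqref{c_504} for $\phi^\pm$ and then relating the resulting Fourier integrals to $\tilde{A}$ and $\tilde{B}$ via the definitions \eqref{c_205}. The key computational tool is the distributional identity \eqref{c_404}, which says
\[
\tfrac{\partial}{\partial x}(1-\mathsf{P}_t)X = \delta(x-t)\,X(t,t) + (1-\mathsf{P}_t)\tfrac{\partial}{\partial x}X
\qquad (X \in \{\Phi, \Psi\}).
\]
Applying $\mathsf{F}$ to this and using $\mathsf{F}(\partial/\partial x f)(z) = -iz \cdot \mathsf{F}f(z)$, together with the defining relations $2\tilde{A}(t,z) = -iz\,\mathsf{F}((1-\mathsf{P}_t)\Psi)(z)$ and $2(-i\tilde{B}(t,z)) = -iz\,\mathsf{F}((1-\mathsf{P}_t)\Phi)(z)$, yields
\[
\int_t^{\infty}\tfrac{\partial}{\partial x}\Psi(t,x)\,e^{izx}\,dx = 2\tilde{A}(t,z) - \Psi(t,t)\,e^{izt},
\]
\[
\int_t^{\infty}\tfrac{\partial}{\partial x}\Phi(t,x)\,e^{izx}\,dx = 2(-i\tilde{B}(t,z)) - \Phi(t,t)\,e^{izt}.
\]

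Substituting \eqref{c_503} into the bracket on the left of \eqref{c_533} and combining these two identities, the $\tilde{A}$- and $\tilde{B}$-parts reproduce exactly the right-hand side of \eqref{c_533}, and the remaining $e^{izt}$ contribution is
\[
\frac{e^{izt}}{2}\left[\,1 \;-\; \frac{\Re(\Phi(t,t))\,\Psi(t,t) - i\,\Im(\Psi(t,t))\,\Phi(t,t)}{\Re(\overline{\Psi(t,t)}\Phi(t,t))}\,\right].
\]
It therefore remains to verify the elementary identity
\[
\Re(\overline{\Psi}\Phi) = \Re(\Phi)\,\Psi - i\,\Im(\Psi)\,\Phi
\]
for arbitrary complex $\Phi, \Psi$; writing $\Phi=\Phi_r+i\Phi_i$, $\Psi=\Psi_r+i\Psi_i$, both sides equal $\Phi_r\Psi_r + \Phi_i\Psi_i$, so the bracket vanishes and \eqref{c_533} follows. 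The proof of \eqref{c_534} is symmetric: one repeats the computation with $\phi^-$ from \eqref{c_504}, and the residual $e^{izt}$ term vanishes by the companion identity $\Re(\overline{\Psi}\Phi) = \Re(\Psi)\,\Phi - i\,\Im(\Phi)\,\Psi$, again verified by expanding in real and imaginary parts.

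There is no real obstacle: the only subtlety is bookkeeping for the boundary contribution at $x=t$ in the distributional derivative (handled cleanly by \eqref{c_404} and condition (K3), which ensures $\Phi(t,t), \Psi(t,t)$ are well-defined nonzero values), after which the proposition reduces to the two short algebraic identities above. One should also note that the condition $\Re(\overline{\Psi(t,t)}\Phi(t,t)) \neq 0$ needed to divide is not separately assumed here, but the denominators appearing in $\phi^\pm$ via \eqref{c_503}--\eqref{c_504} already presuppose it, matching the hypotheses under which those formulas were derived in Proposition \ref{prop_5_1}.
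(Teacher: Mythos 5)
Your proposal is correct and follows essentially the same route as the paper: apply the derivative rule of $\mathsf{F}$ together with the boundary term $\delta(x-t)X(t,t)$ coming from differentiating $(1-\mathsf{P}_t)X$ to express $2\tilde{A}$ and $-2i\tilde{B}$ as $X(t,t)e^{izt}+\int_t^\infty \partial_x X\, e^{izx}dx$, then substitute \eqref{c_503}--\eqref{c_504} and check that the residual $e^{izt}$ coefficients cancel. The paper's proof is identical in structure, merely omitting the elementary identities $\Re(\overline{\Psi}\Phi)=\Re(\Phi)\Psi-i\Im(\Psi)\Phi=\Re(\Psi)\Phi-i\Im(\Phi)\Psi$ that you verify explicitly.
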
 
\begin{proof} By definition \eqref{c_205}, \eqref{c_404}, 
and the derivative rule for $\mathsf{F}$, 
\[
2 \tilde{A}(t,z) 
= 
\Psi(t,t)e^{izt} + \int_{t}^{\infty}\frac{\partial}{\partial x}\Psi(t,x)e^{izx} \, dx, 
\]
\[
-2i \tilde{B}(t,z) 
= 
\Phi(t,t)e^{izt} + \int_{t}^{\infty}\frac{\partial}{\partial x}\Phi(t,x)e^{izx} \, dx. 
\]
Combining these with \eqref{c_503} and \eqref{c_504},  
\[
\aligned 
\int_{t}^{\infty} \phi^+(t,x)e^{izx}\,dx 
&= \frac{\Re(\Phi(t,t))}
{\Re(\overline{\Psi(t,t)}\Phi(t,t))}
(2 \tilde{A}(t,z)) \\ 
&\quad -
\frac{\Im(\Psi(t,t))}
{\Re(\overline{\Psi(t,t)}\Phi(t,t))}
(2\tilde{B}(t,z) ) - e^{izt}, 
\endaligned 
\]
\[
\aligned 
\int_{t}^{\infty} \phi^-(t,x) e^{izx}\,dx
& =\frac{\Im(\Phi(t,t))}
{\Re(\overline{\Psi(t,t)}\Phi(t,t))} (2i \tilde{A}(t,z)) \\
& \quad  -
\frac{\Re(\Psi(t,t))}
{\Re(\overline{\Psi(t,t)}\Phi(t,t))} (-2i \tilde{B}(t,z)) + e^{izt}. 
\endaligned 
\]
Hence we obtain \eqref{c_533} and \eqref{c_534}.  
\end{proof}

%%%%%%%%%%%%%%%%%%%%%%%%%%%%%%%%%%%%%%%%%%%%%
%
\subsection{Proof of Theorem \ref{thm_2_2}} \label{section_5_2}
%
%%%%%%%%%%%%%%%%%%%%%%%%%%%%%%%%%%%%%%%%%%%%%

The first equality of \eqref{c_210} is shown 
in the same way as in the proof of \cite[Theorem 4.1]{Su19_1}. 
To prove the second equality of \eqref{c_210}, 
we calculate the Fourier transform of $Y_z^t$. 
We have
\[
\aligned 
-i(z+w) & \int_{0}^{\infty} Y_z^t(x) e^{iwx}\,dx 
= -i(z+w) \int_{t}^{\infty} \frac{1}{2}(a_z^t(x)+b_z^t(x)) e^{iwx}\,dx \\
& = \int_{t}^{\infty} \frac{1}{2}\left(\frac{\partial}{\partial x} -iz \right) (a_z^t(x)+b_z^t(x)) e^{iwx}\,dx 
 + \frac{1}{2}(a_z^t(t)+b_z^t(t)) e^{iwt}  
\endaligned 
\]
by \eqref{c_510} for $z, w \in \C_+$. The right-hand side is calculated as 
\[
\aligned 
\frac{1}{2} & b_z^t(t) e^{iwt} + \frac{1}{4}
\left( (a_z^t(t)+b_z^t(t)) 
-\overline{(a_z^t(t)-b_z^t(t))}
\right) 
\int_{t}^{\infty}
\phi^+(t,x) e^{iwx} \, dx  \\
& 
+ \frac{1}{2}a_z^t(t) e^{iwt} - \frac{1}{4}
\left(
 (a_z^t(t)+ b_z^t(t)) + \overline{(a_z^t(t)-b_z^t(t))}
\right) 
\int_{t}^{\infty}
\phi^-(t,x) e^{iwx} \, dx \\
& = \frac{1}{4}
\left[ (a_z^t(t)+b_z^t(t)) 
-\overline{(a_z^t(t)-b_z^t(t))}
\right] \left[ e^{iwt} + \int_{t}^{\infty}
\phi^+(t,x) e^{iwx} \, dx \right] \\ 
& \quad 
+ \frac{1}{4}
\left[
 (a_z^t(t)+ b_z^t(t)) + \overline{(a_z^t(t)-b_z^t(t))}
\right] \left[ e^{iwt} - \int_{t}^{\infty}
\phi^-(t,x) e^{iwx} \, dx \right] 
\endaligned 
\]
by \eqref{c_513}. Therefore,  
\[
\aligned 
-i(z+w) &  \int_{0}^{\infty} Y_z^t(x) e^{iwx}\,dx \\
& = \frac{1}{2}
\left[ e^{izt} - \int_{t}^{\infty} \overline{\phi^-(t,x)}e^{izx} \, dx
\right] \left[ e^{iwt} + \int_{t}^{\infty}
\phi^+(t,x) e^{iwx} \, dx \right] \\ 
& \quad 
+ \frac{1}{2}
\left[
 e^{izt} + \int_{t}^{\infty} \overline{(\phi^+(t,x))} e^{izx} \, dx
\right] \left[ e^{iwt} - \int_{t}^{\infty}
\phi^-(t,x) e^{iwx} \, dx \right] 
\endaligned 
\]
by \eqref{c_518} and \eqref{c_519}. Hence, 
\[
\frac{1}{2\pi}\int_{0}^{\infty} Y_z^t(x) e^{iwx}\,dx 
=  
\frac{
\overline{\tilde{A}(t,-\bar{z})}\tilde{B}(t,w)
-\overline{\tilde{B}(t,-\bar{z})}\tilde{A}(t,w)
}
{\pi(z+w)}
\]
by \eqref{c_533} and \eqref{c_534}. 
This implies the second equality of \eqref{c_210} 
for $z, w \in \C_+$, 
because 
\[
\langle Y_w^t,Y_z^t \rangle 
= \int_{t}^{\infty} Y_w^t(x) Y_{-\bar{z}}^t(x) \, dx 
= \int_{t}^{\infty} Y_{-\bar{z}}^t(x) e^{iwx}\, dx
\]
by definition of the vector $Y_z^t$. 
Equality \eqref{c_210} extends to $z,w \in \C_+ \cup D$ 
by analytic continuation, since the second and the third term of \eqref{c_210} 
extend to $z,w \in \C_+ \cup D$ by 
Lemma \ref{lem_5_4} and Theorem \ref{thm_2_1}, respectively. 
\hfill $\Box$

%%%%%%%%%%%%%%%%%%%%%%%%%%%%%%%%%%%%%%%%%%%%%
%
\subsection{Proof of Theorem \ref{thm_2_3}}
%
%%%%%%%%%%%%%%%%%%%%%%%%%%%%%%%%%%%%%%%%%%%%%

In preparation for the proof of Theorems \ref{thm_2_3} and \ref{thm_2_4}, 
we state one result related to the condition (O8). 

\begin{proposition} \label{prop_5_11}
Let $u\in U_{\rm loc}^1(\R)$. 
Suppose that (O2), (O3), (O4), (O5), (O6) are satisfied. 
Then, 
\begin{enumerate}
\item[(1)]  if $t_0 = \infty$, $\lim_{t \to t_0}\Vert Y_z^t \Vert_{L^2(\R)}=0$ 
for every $z \in \C_+$;  
\item[(2)]  if $t_0<\infty$, then $\mathcal{V}_{t_0}(u)=\{0\}$ 
if and only if  $\lim_{t \to t_0}\Vert Y_z^t \Vert_{L^2(\R)}=0$
for almost every $z \in \C_+$. 
\end{enumerate}
\end{proposition}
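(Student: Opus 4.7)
The plan is to exploit the fact, established in Lemma \ref{lem_5_3}, that $Y_z^t$ is the $L^2(\R)$-orthogonal projection of $(1-\mathsf{P}_t)e_z$ onto $\mathcal{V}_t(u)$, combined with a Cauchy-sequence argument. The first step is to establish the monotonicity identity
\[
Y_z^s \;=\; \mathrm{proj}_{\mathcal V_s(u)}\, Y_z^t \qquad (t<s<t_0),
\]
which follows because $(1-\mathsf{P}_s)e_z - (1-\mathsf{P}_t)e_z = -\mathbf{1}_{(t,s)}e_z$ lies in $L^2(t,s)$ and is therefore orthogonal to $\mathcal V_s(u)\subset L^2(s,\infty)$; composing with $\mathrm{proj}_{\mathcal V_t(u)}$ (legitimate since $\mathcal V_s(u)\subset \mathcal V_t(u)$) yields the claim. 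Consequently $\Vert Y_z^t\Vert_{L^2}$ is non-increasing in $t$, and the Pythagorean identity gives
\[
\Vert Y_z^t - Y_z^s\Vert_{L^2}^2 \;=\; \Vert Y_z^t\Vert^2 - \Vert Y_z^s\Vert^2,
\]
so $(Y_z^t)$ is Cauchy in $L^2(\R)$ as $t\to t_0$.

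Part (1) follows at once from the trivial estimate $\Vert Y_z^t\Vert\leq \Vert (1-\mathsf{P}_t)e_z\Vert_{L^2}$ together with the computation $\Vert (1-\mathsf P_t)e_z\Vert^2 = e^{-2\Im(z)t}/(2\Im z)\to 0$ as $t\to\infty$.

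For part (2), assume $t_0<\infty$. If $\mathcal V_{t_0}(u)\neq\{0\}$, choose $f\in\mathcal V_{t_0}(u)\setminus\{0\}$. Then $f\in\mathcal V_t(u)$ for every $t<t_0$, and $F=\mathsf F f$ is a nonzero function holomorphic in $\C_+$, hence nonvanishing off a discrete set. Cauchy--Schwarz gives $|F(z)|=|\langle f,\overline{Y_z^t}\rangle|\leq \Vert f\Vert\cdot\Vert Y_z^t\Vert$, so $\Vert Y_z^t\Vert\geq |F(z)|/\Vert f\Vert>0$ for almost every $z\in\C_+$ and every $t<t_0$. Conversely, if $\mathcal V_{t_0}(u)=\{0\}$, the Cauchy property produces a limit $Y_z\in L^2(\R)$; for each $\tau<t_0$ one has $Y_z^t\in\mathcal V_\tau(u)$ for $t\geq\tau$, and closedness of $\mathcal V_\tau(u)$ in $L^2(\R)$ forces $Y_z\in\mathcal V_\tau(u)$. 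I then need the identification
\[
\bigcap_{\tau<t_0}\mathcal V_\tau(u) \;=\; L^2(t_0,\infty)\cap \mathsf K L^2(t_0,\infty) \;=\; \mathcal V_{t_0}(u),
\]
where the inclusion $\bigcap_\tau \mathsf K L^2(\tau,\infty) \subset \mathsf K L^2(t_0,\infty)$ uses that $g$ lies in the left-hand side iff $\mathsf K g\in\bigcap_\tau L^2(\tau,\infty)=L^2(t_0,\infty)$, since $\mathsf K$ is an antilinear isometric involution. Hence $Y_z=0$ and $\Vert Y_z^t\Vert\to 0$ for every $z\in\C_+$.

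I expect the monotonicity identity $Y_z^s = \mathrm{proj}_{\mathcal V_s(u)} Y_z^t$ to be the subtle point, since it underpins both the Cauchy convergence and the Pythagorean norm decomposition; the remaining ingredients are the soft holomorphic/Cauchy--Schwarz lower bound on $\Vert Y_z^t\Vert$ and the direct manipulation of $\mathsf K$ as an isometric involution.
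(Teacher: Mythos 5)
Your proof is correct and follows essentially the same route as the paper: the trivial bound $\Vert Y_z^t\Vert\leq\Vert(1-\mathsf{P}_t)e_z\Vert$ for (1), and for (2) the monotone/Pythagorean decomposition of $\Vert Y_z^t\Vert$, the resulting $L^2$-Cauchy convergence, and identification of the limit with the reproducing vector of $\mathcal{V}_{t_0}(u)$. Your explicit verification that $\bigcap_{\tau<t_0}\mathcal{V}_\tau(u)=\mathcal{V}_{t_0}(u)$ (via $\mathsf{K}$ being an isometric involution) is a step the paper leaves implicit in asserting $Y=Y_z^{t_0}$, and is a welcome addition.
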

\begin{proof} 
(1) can be proved in the same way as in \cite[Section 4.3]{Su19_1}, 
but here is a simpler proof.  
From the decomposition 
$(1-\mathsf{P}_t)e_z = Y_z^t + J_z^t$ with  
$Y_z^t \in \mathcal{V}_t$ and $J_z^t \in \mathcal{V}_t^\perp$, 
we have
$\Vert Y_z^t \Vert_{L^2(\R)} 
\leq 
\Vert (1-\mathsf{P}_t)e_z \Vert_{L^2(\R)} 
= (2\,\Im(z))^{-1/2}\,e^{-t\cdot \Im(z)}$. 
Hence $\lim_{t \to \infty}\Vert Y_z^t \Vert_{L^2(\R)}=0$.

We prove (2). 
If $t<s$, $(f,Y_z^t)=(\mathsf{F}f)(z)=(f,Y_z^s)$ for every $f\in\mathcal{V}_s(u)$, 
since $\mathcal{V}_t(u) \supset \mathcal{V}_s(u)$. 
Therefore, $(f,Y_z^t-Y_z^s)=0$ for every $f\in\mathcal{V}_s(u)$, 
that is, $Y_z^t-Y_z^s \in \mathcal{V}_s(u)^{\perp}$. 
Thus, $Y_z^t = (Y_z^t-Y_z^s)+Y_z^s$ is an orthogonal decomposition. 
In particular, 
$\Vert Y_z^t \Vert^2 = \Vert Y_z^t-Y_z^s \Vert^2+ \Vert Y_z^s \Vert^2$, 
so $\Vert Y_z^t \Vert$ is non-increasing with respect to $t$. 
Hence $\lim_{t \to t_0} \Vert Y_z^t \Vert$ exists. 
On the other hand, $\Vert Y_z^t \Vert^2 - \Vert Y_z^s \Vert^2 
= \Vert Y_z^t-Y_z^s \Vert^2 \geq 0$ 
shows that the convergence of the norm $\Vert Y_z^t \Vert$ implies the convergence of $Y_z^t$. 
Hence $\lim_{t \to t_0} Y_z^t$ exists in $L^2$ sense. 
Now we suppose $t<t_0$ and put $Y=\lim_{t \to t_0} Y_z^t$. 
Then $\langle f, \overline{Y} \rangle = \mathsf{F}f(z)$ 
for every $f \in \mathcal{V}_{t_0}(u)$, 
since $\langle f, \overline{Y_z^t} \rangle = \mathsf{F}f(z)$. 
Hence $Y=Y_z^{t_0}$ and (2) holds. 
\end{proof}

We have 
\[
\aligned
2\pi i(\bar{z}-w) 
j(t;z,w) 
&= 
\overline{(\tilde{A}(t,z) - i \tilde{B}(t,z))}(\tilde{A}(t,w) - i \tilde{B}(t,w)) \\
& \quad -\overline{(\tilde{A}(t,z) + i \tilde{B}(t,z))}(\tilde{A}(t,w) + i \tilde{B}(t,w)))
\endaligned 
\]
by direct calculation. Thus 
\[
j(t;z,z)=
\frac{|\tilde{A}(t,z) - i \tilde{B}(t,z)|^2 -|\tilde{A}(t,z) + i \tilde{B}(t,z)|^2}{2\pi i(\bar{z}-z)} 
\]
for $z \in \C_+$. On the other hand, we obtain 
\begin{equation*}
j(t;z,z)  
 = \left( \lim_{t \to t_0} j(t,z,z) \right) 
+ \frac{1}{\pi}\int_{t}^{t_0} \,
\begin{bmatrix}
\tilde{A}(u,w) & \tilde{B}(u,w)
\end{bmatrix}
H(u)
\overline{
\begin{bmatrix}
\tilde{A}(u,z) \\ \tilde{B}(u,z)
\end{bmatrix}
} \, du 
\end{equation*}
from the first order system \eqref{c_207} as in the proof of \cite[(2.40)]{Su19_1}.  
The first equality of \eqref{c_210} shows that 
$\lim_{t \to t_0} j(t,z,z)= (1/2\pi)\lim_{t \to t_0}\Vert Y_z^t \Vert^2$, 
and the right-hand side exists by the proof of Proposition \ref{prop_5_11}. 
Hence $j(t;z,z) \geq 0$ for $z \in \C_+$ by (O7), which implies  
$|\theta(t,z)| \leq 1$ for $z \in \C_+$ by definition \eqref{c_211}. 
On the other hand, 
\begin{equation*} 
\theta(t,z)^\sharp=\frac{\tilde{A}(t,z) - i \tilde{B}(t,z)}{\tilde{A}(t,z) + i \tilde{B}(t,z)} 
= \frac{1}{\theta(t,z)}
\end{equation*}
as a function of $z$ by definition \eqref{c_211} and \eqref{c_206}. 
Thus, $|\theta(t,z)|=1$ for real $z$.  
Hence $\theta(t,z)$ is inner. 
For an inner function $\theta$, 
the reproducing kernel of the model space $\mathcal{K}(\theta)$ is 
$(1/2\pi i)(1-\overline{\theta(z)}\theta(w))/(\bar{z}-w)$. 
We confirm that the reproducing kernel of $\mathsf{F}(\mathcal{V}_t(u))$ 
equals to the reproducing kernel of $\mathcal{K}(\theta(t,z))$ 
by direct calculation. 
\hfill $\Box$

%%%%%%%%%%%%%%%%%%%%%%%%%%%%%%%%%%%%%%%%%%%%%%%%%%%%%%%%%%%%%%%%%%%%%%%%%%%%%%%%%%%%%%%%
%
\subsection{Analytic properties of 
$A(t,z)$ and $B(t,z)$} \label{section_5_5}
%
%%%%%%%%%%%%%%%%%%%%%%%%%%%%%%%%%%%%%%%%%%%%%%%%%%%%%%%%%%%%%%%%%%%%%%%%%%%%%%%%%%%%%%%%

In the cases of $u(z)=M^\sharp(z)/M(z)$, 
we defined $A(t,z)$, $B(t,z)$, $E(t,z)$ by \eqref{c_212}. 
Then they are entire functions 
satisfying 
\[
A(t,z)= A^\sharp(t,z), \quad B(t,z)= B^\sharp(t,z)
\] 
by \eqref{c_206} and Theorem \ref{thm_2_1}. 
Therefore, $E^\sharp(t,z)=A(t,z) +i B(t,z)$ and 
\[
A(t,z) = \frac{1}{2}(E(t,z) + E^\sharp(t,z)), \quad 
B(t,z) = \frac{i}{2}(E(t,z) - E^\sharp(t,z)). 
\]

If $M^\sharp(z)= \varepsilon M(-z)$ for a sign $\varepsilon \in \{\pm 1\}$, 
then $u$ is symmetric, 
and therefore $\Phi$ and $\Psi$ are real-valued. Hence, 
\[
\aligned 
E^\sharp(t,z)
& = M^\sharp(z)\frac{iz}{2}\int_{t}^{\infty} (\Phi(t,x)+\Psi(t,x)) e^{-izx} \,dx \\
& = \varepsilon M(-z)\frac{-i(-z)}{2}\int_{t}^{\infty} (\Phi(t,x)+\Psi(t,x)) e^{i(-z)x} \,dx 
= \varepsilon E(t,-z). 
\endaligned 
\]
Therefore, $A(t,z)$ is even and $B(t,z)$ is odd 
if $\varepsilon=+1$, and $A(t,z)$ is odd and $B(t,z)$ is even 
if $\varepsilon=-1$. 

%%%%%%%%%%%%%%%%%%%%%%%%%%%%%%%%%%%%%%%%%%%%%
%
\subsection{Conformity of the axiom of de Branges spaces}
%
%%%%%%%%%%%%%%%%%%%%%%%%%%%%%%%%%%%%%%%%%%%%%

\begin{proposition} \label{prop_5_12} 
Suppose that $u(z)=M^\sharp(z)/M(z)$ 
for some meromorphic function $M(z)$ on $\C$ 
such that it is holomorphic on $\C_+ \cup \R$ and has no zeros in $\C_+$.  
Further, suppose that  (O2), (O3), (O4), (O5), (O6) are satisfied. 
Then $M(z)\mathsf{F}(\mathcal{V}_t(u))$ is a de Branges space 
for every $t  < t_0$. 
\end{proposition}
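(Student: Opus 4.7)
The plan is to identify $M(z)\mathsf{F}(\mathcal{V}_t(u))$, equipped with the pushforward Hilbert structure $\langle MF_1, MF_2 \rangle := \langle F_1, F_2 \rangle_{\mathsf{F}(\mathcal{V}_t(u))}$, with the de Branges space $\mathcal{H}(E(t,z))$ where $E(t,z) = A(t,z) - iB(t,z)$ as in \eqref{c_212}. Since a Hilbert space of entire functions whose reproducing kernel has the de Branges normal form attached to some $E \in \overline{\mathbb{HB}}$ is by definition $\mathcal{H}(E)$, the argument splits into three steps: (i) promoting $A(t,\cdot)$ and $B(t,\cdot)$ from meromorphic functions on $\C \setminus \R$ to entire functions real on $\R$; (ii) computing the pushed-forward reproducing kernel and verifying it has the de Branges form; (iii) confirming $E(t,\cdot) \in \overline{\mathbb{HB}}$.

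For step (i), the hypothesis $u = M^\sharp/M$ with $M$ meromorphic on $\C$ lets us take $D = \C$ in (U2), so Theorem~\ref{thm_2_1}(2) supplies $\tilde A(t,\cdot)$ holomorphic on $\C_+$ and meromorphic on $\C_-$. Holomorphy of $M$ on $\C_+\cup\R$ makes $A(t,z) = M(z)\tilde A(t,z)$ holomorphic on $\C_+$. The functional equation \eqref{c_206} rewrites $A(t,z) = M(z)u(z)\tilde A^\sharp(t,z) = M^\sharp(z)\tilde A^\sharp(t,z)$ on $\C \setminus \R$, and this second representation is holomorphic on $\C_-$ because $M^\sharp$ is holomorphic on $\C_- \cup \R$ and $\tilde A^\sharp$ is holomorphic on $\C_-$. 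The two holomorphic representations agree as meromorphic functions on $\C \setminus \R$ and therefore glue into an entire function, and the identity $A = M^\sharp\tilde A^\sharp = A^\sharp$ shows it is real on $\R$. The same argument handles $B$.

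For step (ii), the pushforward inner product gives the reproducing kernel as $M(z)\overline{M(w)}j(t;z,w)$, and substituting Theorem~\ref{thm_2_2} rewrites it in the de Branges form
\[
K(t;z,w) = \frac{\overline{A(t,z)}B(t,w) - A(t,w)\overline{B(t,z)}}{\pi(w - \bar z)}.
\]
Because $A = A^\sharp$ and $B = B^\sharp$, the numerator vanishes at $w = \bar z$, so $K(t;z,\cdot)$ is entire in $w$ for each fixed $z$, and every element of $M\mathsf{F}(\mathcal{V}_t(u))$ is entire. For step (iii), the positivity of the reproducing kernel $K(t;z,z) \geq 0$, which is automatic from RKHS theory, translates via the identity $|E(t,z)|^2 - |E^\sharp(t,z)|^2 = 4\pi\,\Im(z)\,K(t;z,z)$ into $|E^\sharp(t,z)| \leq |E(t,z)|$ on $\C_+$; since $\mathcal{V}_t(u) \neq \{0\}$ prevents the real-analytic function $K(t;z,z)$ from vanishing identically on $\C_+$, we obtain strictness off a discrete set and hence $E(t,\cdot) \in \overline{\mathbb{HB}}$. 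Combined with step (ii), this identifies the space with $\mathcal{H}(E(t,z))$.

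The most delicate step is (i): the functional equation \eqref{c_206} must be invoked as an identity of meromorphic functions on $\C \setminus \R$, not merely as an almost-everywhere boundary relation, in order to glue the two holomorphic representations of $A(t,\cdot)$ across the real line into a single entire function. The combined holomorphy of $M$ on $\C_+\cup\R$ and of $M^\sharp$ on $\C_-\cup\R$ is exactly what is needed for this matching to succeed; without the assumption that $M$ extends holomorphically across $\R$, step (i) would fail and the space would consist only of meromorphic, not entire, functions.
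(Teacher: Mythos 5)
Your route is genuinely different from the paper's. The paper proves Proposition \ref{prop_5_12} by verifying the three de Branges axioms directly for $\mathcal{H}=M\mathsf{F}(\mathcal{V}_t(u))$: (dB1) via the explicit integral representations of $M\mathsf{F}f$ on $\C_+$ and of $M^\sharp\mathsf{F}\mathsf{J}_\sharp\mathsf{K}f$ on $\C_-$ together with Banach--Steinhaus on $\R$; (dB2) from $\Phi^\sharp=M\,\mathsf{F}\mathsf{K}f$ and the isometry of $\mathsf{K}$; and (dB3) -- the real content -- by the explicit construction $f_w(x)=f(x)-i(w-\bar w)\int_0^{x-t}f(x-y)e^{-iwy}\,dy$ and the verification that $\mathsf{K}f_w$ is again supported in $[t,\infty)$. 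You instead identify $\mathcal{H}$ with $\mathcal{H}(E(t,z))$ by computing its reproducing kernel via Theorem \ref{thm_2_2} and invoking uniqueness of the RKHS attached to a kernel; in the paper this identification is the content of the proof of Theorem \ref{thm_2_4}, where it is carried out \emph{after} Proposition \ref{prop_5_12} and Theorem \ref{thm_2_3}. Your plan is not circular (Theorem \ref{thm_2_2} does not rely on Proposition \ref{prop_5_12}), and it has one genuine advantage: you obtain $E(t,\cdot)\in\overline{\mathbb{HB}}$ from the automatic positivity $K(t;z,z)=\Vert K(t;z,\cdot)\Vert^2\ge 0$, so you never need (K7), which is indeed not among the hypotheses. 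The price is that you delegate all the hard analysis to Theorem \ref{thm_2_2}, whereas the paper's proof is self-contained and, via (dB3), yields structural information that the kernel formula alone does not display.

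There are two gaps. First, in step (i) the inference ``the two holomorphic representations agree as meromorphic functions on $\C\setminus\R$ and \emph{therefore} glue into an entire function'' is a non sequitur: a function holomorphic on $\C_+$ and on $\C_-$ whose two halves are related by an identity on $\C\setminus\R$ need not extend across $\R$ at all (a locally constant function with different constants on the two half-planes satisfies many such identities). What actually forces the extension is that the non-tangential boundary values from above and below coincide a.e.\ on $\R$ (Theorem \ref{thm_2_1}(3), plus holomorphy of $M$ and $M^\sharp$ across $\R$), combined with local integrability and a Morera/Privalov-type argument; this is precisely the mechanism in the paper's proof, so your diagnosis of where the delicacy lies is right but your proposed resolution is not sufficient as stated. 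Second, in step (ii) the jump from ``$K(t;z,\cdot)$ is entire for each $z$'' to ``every element of $M\mathsf{F}(\mathcal{V}_t(u))$ is entire'' is unjustified: a general element is only an $L^2$-limit of linear combinations of kernel functions, and you need either locally uniform convergence (available because $K(t;z,z)$ is locally bounded) or a careful restriction-to-$\C_+$ version of the RKHS uniqueness theorem to transport entirety to all elements. Both gaps are fillable by standard arguments, but as written the proof does not close them, and the second one is exactly the point that the paper's proof of Proposition \ref{prop_5_12} spends most of its effort on.
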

\begin{proof} 
We show that $\mathcal{H}:=M(z)\mathsf{F}(\mathcal{V}_t(u))$ 
is a Hilbert space consisting of entire functions and 
satisfies the axiom of the de Branges spaces: 
\begin{enumerate}
\item[(dB1)] For each $z \in \C\setminus \R$ the point evaluation $\Phi \mapsto \Phi(z)$ is a continuous linear functional on $\mathcal{H}$; 
\item[(dB2)] If $\Phi \in \mathcal{H}$, $\Phi^\sharp$ belongs to $\mathcal{H}$ and $\Vert \Phi \Vert_{\mathcal{H}} = \Vert \Phi^\sharp \Vert_{\mathcal{H}}$;  
\item[(dB3)] If $w \in \C \setminus \R$, $\Phi \in \mathcal{H}$ and $\Phi(w)=0$, 
\begin{equation*}
\frac{z-\bar{w}}{z-w}\Phi(z) \in \mathcal{H} \quad \text{and} \quad 
\left\Vert \frac{z-\bar{w}}{z-w}\Phi(z) \right\Vert_{\mathcal{H}} = \Vert \Phi \Vert_{\mathcal{H}},
\end{equation*}
\end{enumerate}
where the Hilbert space structure is the one induced from $\mathcal{V}_t(u)$ 
that is equivalent to $\langle F,G\rangle_{\mathcal H}
= \int_{\R} F(z)\overline{G(z)}|M(z)|^{-2}dz$ for $F, G \in \mathcal{H}$. 

Let $\Phi(z)=M(z)(\mathsf{F}f)(z) \in \mathcal{H}$ with $f \in \mathcal{V}_t(u)$. 
First, we prove that $\mathcal{H}$ consists of entire functions. 
We see that $\Phi(z)$ is holomorphic on $\C_+$ by $f \in L^2(t,\infty)$
and is holomorphic on $\C_-$ by 
$\Phi(z)=M^\sharp(z)(\mathsf{F}\mathsf{J}_\sharp \mathsf{K}f)(z)$ 
and $\mathsf{J}_\sharp \mathsf{K}f \in L^2(-\infty,-t)$. 
Moreover, %we have 
$\lim_{z \to x}(\mathsf{F}f)(z)=(\mathsf{F}f)(x)$ and 
$\lim_{z \to x}(\mathsf{F}\mathsf{J}_\sharp \mathsf{K}f)(z)
=\lim_{z \to x}(\mathsf{F}\mathsf{K}f)^\sharp(z)=u^\sharp(x)(\mathsf{F}f)(x)$ 
for almost all $x \in \R$, 
where $z$ is allowed to tends to $x$ non-tangentially 
from $\C_+$ and $\C_-$, respectively. 
Hence $\Phi(z)$ is also holomorphic in a neighborhood of each point of $\R$. 

We confirm (dB1). 
For $z \in \C_+$, $\Phi \mapsto \Phi(z)=M(z)\int_{t}^{\infty}f(x)e^{izx}dx$ is a continuous linear form. 
On the other hand, for $z \in \C_-$, $\Phi \mapsto \Phi(z)=M^\sharp(z)
\int_{-\infty}^{-t} \overline{(\mathsf{K}f)(-x)}e^{izx}\,dx$ is a continuous linear functional. 
(Moreover, for $z \in \R$, the continuity follows from the Banach-Steinhaus theorem.) 

We confirm (dB2). 
We have  
$\Phi^\sharp(z)
= M(z)(\mathsf{F}\mathsf{K}f)(z)$. 
Since $\mathsf{K}f \in \mathcal{V}_t(u)$, $\Phi^\sharp$ belongs to $\mathcal{H}$. 
Since $\mathsf{K}$ is isometric, the equality of norms in (dB2) holds. 

We confirm (dB3). 
The equality of norms in (dB3) is trivial by the definition of 
the norm of $\mathcal{H}$. 
From (dB2), it is sufficient to show only the case of $w \in \C_+$. 
Suppose that $\Phi(w)=0$ for $w \in \C_+$. 
Then $(\mathsf{F}f)(w)=0$, since $M(z)$ has no zeros on $\C_+$. 
We put $f_w(x)=f(x) - i(w-\bar{w}) \int_{0}^{x-t} f(x-y) e^{-iwy} dy$.   
Then we easily find that $f_w \in L^2(t,\infty)$ 
and $(\mathsf{F}f_w)(z)=((z-\bar{w})/(z-w))(\mathsf{F}f)(z)$ 
for $z \in \C_+$. 
Hence we complete the proof if it is shown that 
$\mathsf{K}f_w$ has support in $[t,\infty)$, 
since $\mathsf{K}f_w \in L^2(\R)$ by $f_w \in L^2(t,\infty)$. 
We put 
$g_w(x)=(\mathsf{K}f)(x) - i(\bar{w}-w) \int_{0}^{x-t} (\mathsf{K}f)(x-y) e^{-i\bar{w}y} dy$. 
Then $g_w$ has support in $[t,\infty)$ by $\mathsf{K}f \in L^2(t,\infty)$ 
and
$(\mathsf{F}g_w)(z)=((z-w)/(z-\bar{w}))(\mathsf{F}\mathsf{K}f)(z)
=(\mathsf{F}\mathsf{K}f_w)(z)$ for $z \in \C_+$.
Hence $g_w =\mathsf{K}f_w$ and the proof is completed. 
\end{proof}

%%%%%%%%%%%%%%%%%%%%%%%%%%%%%%%%%%%%%%%%%%%%%
%
\subsection{Proof of Theorem \ref{thm_2_4}}
%
%%%%%%%%%%%%%%%%%%%%%%%%%%%%%%%%%%%%%%%%%%%%%

As mentioned in Section \ref{section_5_5}, 
$A(t,z)$, $B(t,z)$, $E(t,z)$ are entire functions 
with the assumptions of Theorem \ref{thm_2_4}. 
Also, they satisfy the system of differential equation \eqref{c_101} 
for $t<t_0$ and $z \in \C$ by Theorem \ref{thm_2_1}. 
Further, we have 
$\Theta(t,z)=E^\sharp(t,z)/E(t,z)$ 
for $\Theta(t,z)$ defined by \eqref{c_211}, 
since $E^\sharp(t,z)=A(t,z)+iB(t,z)$. 
Thus $E(t,z) \in \overline{\mathbb{HB}}$ by Theorem \ref{thm_2_3} 
and therefore the de Branges space $\mathcal{H}(E(t,z))$ is defined. 
On the other hand,  $M(z)\mathsf{F}(\mathcal{V}_t(u))$ 
is also a de Branges space by Proposition \ref{prop_5_12}.  
Let $J(t;z,w)$ be the reproducing kernel of $M(z)\mathsf{F}(\mathcal{V}_t(u))$. 
Then, 
\[
J(t;z,w) = \overline{M(z)}M(w)j(t;z,w)
= 
\frac{ 
\overline{A(t,z)}B(t,w)  
- A(t,w)\overline{B(t,z)}
}{\pi(w-\bar{z})} 
\]
for every $t<t_0$ by Theorem \ref{thm_2_2}. 
The right-hand side is nothing but the reproducing kernel of $\mathcal{H}(E(t,z))$ 
(\cite[Section 3.2]{Su19_1}). 
Hence $M(z)\mathsf{F}(\mathcal{V}_t(u))=\mathcal{H}(E(t,z))$ 
for every $t<t_0$. 
To conclude that $H(t)$ on $[t_1,t_0)$ 
is the structure Hamiltonian of $\mathcal{H}(E(t_1,z))$, 
it remains to show $\lim_{t \to t_0} J(t;z,w)=0$, 
but this follows from (O8) by Proposition \ref{prop_5_11}. 
\hfill $\Box$

%%%%%%%%%%%%%%%%%%%%%%%%%%%%%%%%%%%%%%%%%%%%%%%%%%%%%%%%%%%%%%%%%%%%%%%%%%%%%%%%%%%%%%%%
%
\section{Proof of results in Section \ref{section_2_3}} \label{section_6}
%
%%%%%%%%%%%%%%%%%%%%%%%%%%%%%%%%%%%%%%%%%%%%%%%%%%%%%%%%%%%%%%%%%%%%%%%%%%%%%%%%%%%%%%%%

%%%%%%%%%%%%%%%%%%%%%%%%%%%%%%%%%%%%%%%%%%%%%%%%%%%%%%%%%%%%%%%%%%%%%%%%%%%%%%%%%%%%%%%%
%
\subsection{Properties of $\mathsf{K}$ for an inner function $u$} \label{section_6_1}
%
%%%%%%%%%%%%%%%%%%%%%%%%%%%%%%%%%%%%%%%%%%%%%%%%%%%%%%%%%%%%%%%%%%%%%%%%%%%%%%%%%%%%%%%%

To describe the properties of the operator $\mathsf{K}=\mathsf{K}_u$ 
when $u$ is an inner function in $\C_+$, 
we recall the following result on inner functions (\cite[Theorems 1.1 and 1.2]{QXYYY09}):  

\begin{proposition} \label{prop_6_1}
A unimodular function $u$ in $L_{\rm loc}^1(\R)$ 
is the nontangential limit of an inner function $\theta$ in $\C_+$ 
if and only if the tempered distribution 
$k=\mathsf{F}^{-1}u$ has support in $[0, \infty)$. 
\end{proposition}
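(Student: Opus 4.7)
The statement is a classical Paley--Wiener-type characterization of $H^\infty(\C_+)$ in terms of spectral support, and I would prove the two implications separately using contour deformation and the Poisson extension.

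For the ``only if'' direction, assume $u$ is the nontangential boundary value of an inner $\theta \in H^\infty(\C_+)$ with $|\theta|\leq 1$. To show $k=\mathsf{F}^{-1}u$ is supported in $[0,\infty)$, take any $\varphi \in S(\R)$ with $\mathrm{supp}\,\varphi \subset (-\infty,0)$ and aim for $(k,\varphi)=0$. By the Fourier pairing identity $(\mathsf{F}^{-1}u,\varphi)=(u,\mathsf{F}^{-1}\varphi)$ this reduces to $\int_\R u(x)\, g(x)\,dx = 0$ where $g := \mathsf{F}^{-1}\varphi$. The support condition on $\varphi$ together with repeated integration by parts shows that $g$ extends holomorphically to $\overline{\C_+}$ and satisfies $|g(z)| \leq C_N(1+|z|)^{-N}$ uniformly there for every $N$ (since $|e^{-izx}|\leq 1$ when $x<0$ and $\Im z\geq 0$). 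Then for each $\epsilon>0$, Cauchy's theorem applied to $\theta(z)g(z)$ on a closed contour consisting of the segment $\Im z = \epsilon$, $|\Re z|\leq R$, and the upper semicircle based at $i\epsilon$ of radius $R$ gives $\int_\R \theta(x+i\epsilon)g(x+i\epsilon)\,dx = 0$ in the limit $R\to\infty$ (the arc contribution decays by the polynomial decay of $g$ against boundedness of $\theta$). Letting $\epsilon\to 0^+$ and invoking a.e.\ nontangential convergence of $\theta(\cdot+i\epsilon)$ together with dominated convergence yields the required equality.

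For the ``if'' direction, assume $k = \mathsf{F}^{-1}u$ is supported in $[0,\infty)$. Define $\theta$ on $\C_+$ by the Poisson extension $\theta(x+iy) := (P_y * u)(x)$ with $P_y(x) = y/(\pi(x^2+y^2))$. This is harmonic, bounded by $\|u\|_\infty = 1$, and has nontangential boundary values $u$ a.e. It remains to verify holomorphicity, and here the support hypothesis enters. Via the convention $(\mathsf{F}^{-1}u)(x) = (2\pi)^{-1}(\mathsf{F}u)(-x)$, the assumption $\mathrm{supp}\,k \subset [0,\infty)$ becomes $\mathrm{supp}\,\mathsf{F}u \subset (-\infty,0]$. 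Since $\mathsf{F}P_y(\xi) = e^{-y|\xi|}$, we obtain $\mathsf{F}_x[\theta(\cdot+iy)](\xi) = e^{-y|\xi|}\mathsf{F}u(\xi) = e^{y\xi}\mathsf{F}u(\xi)$, the last equality using that $\mathsf{F}u$ vanishes for $\xi > 0$. A direct check of the Cauchy--Riemann equation on the Fourier side ($\partial_x$ acts as multiplication by $-i\xi$ and $\partial_y$ acts as multiplication by $\xi$) yields $\partial_x\theta + i\partial_y\theta = 0$, so $\theta \in H^\infty(\C_+)$ is an inner function with boundary value $u$.

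The main technical obstacle will be the careful treatment of distributional Fourier manipulations in the second direction, specifically making sense of $e^{-y|\xi|}\mathsf{F}u(\xi)$ as a tempered distribution and identifying it with $e^{y\xi}\mathsf{F}u(\xi)$; the delicate point concentrates at $\xi=0$, where $e^{-y|\xi|}$ fails to be smooth. This can be handled either by a regularization (replacing $|\xi|$ by $\sqrt{\xi^2+\delta^2}$ and sending $\delta\to 0^+$) or, more robustly, by bypassing the Fourier side entirely: verify holomorphicity of $\theta$ directly from the Cauchy integral $\tfrac{1}{2\pi i}\int u(t)/(t-z)\,dt$ on $\C_+$ and then match boundary values against the Poisson extension via the classical $L^\infty$ theory.
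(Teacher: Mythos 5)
The paper does not actually prove Proposition \ref{prop_6_1}: it is quoted verbatim from Qian--Xu--Yan--Yan--Yu \cite{QXYYY09}, so your argument is a self-contained substitute for an external citation rather than a variant of an internal proof. As such it is essentially sound. The ``only if'' direction is complete: for $\varphi\in S(\R)$ supported in $(-\infty,0)$ the function $g=\mathsf{F}^{-1}\varphi$ does extend to $\overline{\C_+}$ with uniform polynomial decay, the contour shift kills $\int\theta(\cdot+i\epsilon)g(\cdot+i\epsilon)$, and dominated convergence plus a.e.\ nontangential convergence of $\theta$ finishes it; this tests $k$ against enough functions to pin down its support. The ``if'' direction is also structurally right, and you correctly isolate the only delicate point, namely giving meaning to $e^{-y|\xi|}\mathsf{F}u(\xi)$ for a tempered distribution $\mathsf{F}u$; since $\mathsf{F}u$ is supported in $(-\infty,0]$ one can replace $e^{-y|\xi|}$ by $\chi(\xi)e^{y\xi}$ for a smooth cutoff $\chi\equiv 1$ near the support with $\chi(\xi)e^{y\xi}$ of moderate growth, which legitimizes the multiplier and the Cauchy--Riemann computation without any regularization at $\xi=0$. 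Two small cautions: the convolution identity $\mathsf{F}(P_y\ast u)=\mathsf{F}P_y\cdot\mathsf{F}u$ for $P_y\in L^1$ and $u\in L^\infty$ deserves one line of justification in $S'(\R)$; and your fallback via the Cauchy integral $\frac{1}{2\pi i}\int u(t)/(t-z)\,dt$ does not converge for merely bounded $u$ --- you would need the renormalized kernel $\frac{1}{t-z}-\frac{t}{1+t^2}$, or to work with $u(t)/(t+i)^2$, before comparing with the Poisson extension. With those repairs the argument is a correct and reasonably economical proof of the cited fact, arguably more transparent than outsourcing it.
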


Therefore, if $u$ is an inner function in $\C_+$, $\mathsf{K}f$ has support in $[-t,\infty)$ 
for every $f \in \mathsf{P}_t S'(\R)$, since  
\[
\aligned 
{\rm supp}\,\mathsf{K}f = 
{\rm supp}\,(k \ast \mathsf{J}_\sharp f) & \subset 
\overline{{\rm supp}\, k 
+
{\rm supp}\,\mathsf{J}_\sharp f} \\
& \subset 
\overline{[0,\infty)+[-t,\infty)}= [-t,\infty). 
\endaligned 
\]

\begin{proposition} \label{prop_6_2}
Suppose that $u$ is an inner function $\theta\not=1$ in $\C_+$. 
Then, 
\begin{enumerate}
\item[(1)]  $\mathsf{K}[t]=0$ as an operator for nonpositive $t$. In particular, (O1) holds; 
\item[(2)]  $H(t)$ defined by \eqref{c_208} and \eqref{c_209} is the identity matrix for all negative $t$:  
\item[(3)]  $\mathcal{V}_t(u)\not=\{0\}$ for nonpositive $t$. In particular, (O5) holds;  
\item[(4)]  $\mathsf{F}(\mathcal{V}_0(u))=\mathcal{K}(\theta)$. 
In particular, if $\theta=E^\sharp/E$ for some $E \in \mathbb{HB}$, 
we have $E(z)\mathsf{F}(\mathcal{V}_0(u))=\mathcal{H}(E)$. 
\end{enumerate}
\end{proposition}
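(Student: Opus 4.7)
The plan is to handle the four assertions in order, using Proposition~\ref{prop_6_1} at each step to convert the hypothesis that $\theta$ is inner into the support condition ${\rm supp}(k) \subset [0,\infty)$ for $k = \mathsf{F}^{-1}u$.

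For (1), given $f \in L^2(-\infty,t)$ with $t \leq 0$, the reflection $\mathsf{J}_\sharp f$ has support in $[-t,\infty)$, and the distributional support bound for convolution gives
\[
{\rm supp}(\mathsf{K}f) = {\rm supp}(k \ast \mathsf{J}_\sharp f) \subset \overline{[0,\infty) + [-t,\infty)} = [-t,\infty).
\]
Since $-t \geq t$ for $t \leq 0$, this support is disjoint from $(-\infty,t)$, so $\mathsf{P}_t \mathsf{K} f = 0$, whence $\mathsf{K}[t] = 0$.

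For (2), I would apply the same support argument to the tempered distribution $\mathsf{P}_t 1$ to obtain ${\rm supp}(\mathsf{K}\mathsf{P}_t 1) \subset [-t,\infty)$, and hence $\mathsf{K}[t] 1 = 0$. The equations defining $\varphi$ and $\psi$ in Section~\ref{section_2_1} are then solved by $\varphi = \psi = 0$, so
\[
\Phi(t,x) = 1 - \mathsf{K}\mathsf{P}_t 1(x), \qquad \Psi(t,x) = 1 + \mathsf{K}\mathsf{P}_t 1(x).
\]
For $t < 0$ and $x < t$, we have $x < t \leq -t$, so $x \notin {\rm supp}(\mathsf{K}\mathsf{P}_t 1)$, whence $\Phi(t,x) = \Psi(t,x) = 1$ on a left neighborhood of $t$. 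Hence $\Phi(t,t) = \Psi(t,t) = 1$ as left boundary values, consistent with (K3). Substituting into \eqref{c_209} gives $\alpha(t) = \gamma(t) = 1$ and $\beta(t) = 0$, so $H(t)$ is the identity matrix.

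For (3) and (4), the central computation is the Fourier-side identity $\mathsf{F}(\mathcal{V}_0(u)) = \mathcal{K}(\theta)$. By Paley--Wiener, $\mathsf{F}L^2(0,\infty) = H^2(\C_+)$, and a function $f \in L^2(0,\infty)$ lies in $\mathcal{V}_0(u)$ iff $\mathsf{K}f \in L^2(0,\infty)$. Writing $F = \mathsf{F}f$ and using $(\mathsf{F}\mathsf{K}f)(z) = u(z) F^\sharp(z)$, this second condition translates to $\theta F^\sharp \in H^2(\C_+)$. I would then invoke the standard characterization
\[
\mathcal{K}(\theta) = \{F \in H^2(\C_+) : \theta F^\sharp \in H^2(\C_+)\},
\]
obtained by unwinding the orthogonality $F \perp \theta H^2(\C_+)$ via the Hermitian pairing on $L^2(\R)$ and the decomposition $L^2(\R) = H^2(\C_+) \oplus H^2(\C_-)$. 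This establishes (4). For (3), since $\theta \neq 1$ is inner we have $\theta H^2(\C_+) \subsetneq H^2(\C_+)$, so $\mathcal{K}(\theta) \neq \{0\}$ and hence $\mathcal{V}_0(u) \neq \{0\}$; the trivial inclusion $\mathcal{V}_t(u) \supset \mathcal{V}_0(u)$ for $t \leq 0$ extends this to all nonpositive $t$. The final assertion $E(z)\mathsf{F}(\mathcal{V}_0(u)) = \mathcal{H}(E)$ then follows from the standard isometric bijection $\mathcal{K}(E^\sharp/E) \to \mathcal{H}(E)$ given by multiplication by $E$, recalled in the discussion preceding Theorem~\ref{thm_2_4}.

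The step requiring the most care is the support-of-convolution bound applied to $\mathsf{P}_t 1$, since this is neither an $L^2$ function nor of compact support; one must justify the support calculus at the level of tempered distributions using the extension of $\mathsf{K}$ to $S'(\R)$ established in Section~\ref{section_4_1}. The model-space characterization invoked in (4), while classical, also deserves a brief explicit derivation in the paper's conventions to verify the equivalence between $L^2(\R)$-orthogonality to $\theta H^2(\C_+)$ and Hardy-space membership of $\theta F^\sharp$.
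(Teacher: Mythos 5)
Your proof is correct. Parts (1) and (2) follow the paper's own argument essentially verbatim: Proposition~\ref{prop_6_1} plus the support rule for convolutions gives ${\rm supp}(\mathsf{K}f)\subset[-t,\infty)$ for $f\in\mathsf{P}_tS'(\R)$, hence $\mathsf{K}[t]=0$ for $t\le 0$, $\varphi=\psi=0$, $\Phi=1-\mathsf{K}\mathsf{P}_t1$, $\Psi=1+\mathsf{K}\mathsf{P}_t1$, and $\Phi(t,t)=\Psi(t,t)=1$ for $t<0$. For (3)--(4) you take a mildly different route. The paper proves (3) first, using the orthogonal-complement description $\mathcal{V}_t(u)^\perp=L^2(-\infty,t)+\mathsf{K}L^2(-\infty,t)$ of Lemma~\ref{lem_5_5} together with the properness of $\theta H^2(\C_+)$ in $H^2(\C_+)$, and then handles (4) by citing the proof of Lemma 4.1 of \cite{Su19_1}. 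You instead prove (4) directly --- $f\in\mathcal{V}_0(u)$ iff $F=\mathsf{F}f\in H^2(\C_+)$ and $\theta F^\sharp\in H^2(\C_+)$, which is the standard alternative description of $\mathcal{K}(\theta)$ --- and then deduce (3) from (4) and the monotonicity $\mathcal{V}_t(u)\supset\mathcal{V}_0(u)$ for $t\le 0$. This is somewhat cleaner and more self-contained: it bypasses Lemma~\ref{lem_5_5} and the external citation, at the cost of verifying the model-space characterization, which you rightly flag as needing a short derivation in the paper's conventions. One caveat shared with the paper's own proof: a unimodular constant $\theta\ne 1$ has $\theta H^2(\C_+)=H^2(\C_+)$ and $\mathcal{K}(\theta)=\{0\}$, so both arguments implicitly read ``$\theta\ne 1$'' as ``$\theta$ non-constant.''
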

\begin{proof}
For $f \in L^2(-\infty,t)$, $\mathsf{K}f$ belongs to $L^2(-t,\infty)$ 
by Proposition \ref{prop_6_1}, and therefore $\mathsf{K}[f]f=0$ for 
all $f \in L^2(-\infty,t)$, that is, (1) holds. 
For negative $t$, \eqref{c_203} and \eqref{c_204} are easily solved as 
$\Phi = 1- \mathsf{K}\mathsf{P}_t1$ and $\Psi = 1+ \mathsf{K}\mathsf{P}_t1$ 
by (1). Here $\mathsf{K}\mathsf{P}_t1$ has support in $[-t,\infty)$, 
thus $\Phi(t,t)=\Psi(t,t)=1$, which implies (2). 

To prove $\mathcal{V}_0(u)\not=\{0\}$, 
it is sufficient to show that $\mathsf{K}L^2(-\infty,0)$ 
is a proper subspace of $L^2(0,\infty)$ by \eqref{c_507}.  
The latter is true because 
the space of Fourier transforms 
$\mathsf{F}(\mathsf{K}L^2(-\infty,0))=\theta H^2(\C_+)$ 
is a proper subspace of $H^2(\C_+)=\mathsf{F}(L^2(0,\infty))$.  
For negative $t$, we have 
$\mathsf{K}L^2(-\infty,t) \subset L^2(-t,\infty) \subsetneq L^2(t,\infty)$. 
Therefore, $\mathcal{V}_t(u)^\perp$ is a proper subspace of $L^2(\R)$, 
and hence (3) holds. 

To prove (4), the proof of \cite[Lemma 4.1]{Su19_1} can be applied by replacing 
$\Theta(-z)$ and $(\mathsf{F}f)(-z)$, etc. in \cite{Su19_1} 
with $\theta^\sharp(z)$ and $(\mathsf{F}f)^\sharp(z)$, etc. in this paper. 
The difference on the definition of the operator $\mathsf{K}$ does not affect the argument of the proof. 
\end{proof}

For any of (O2), (O3), (O4), (O6), (O7), (O8), 
we do not know whether it holds for a general inner function. 
However, for (O6) and (O7), we can give sufficient conditions as in Section \ref{section_7}. 
As in the example in Section \ref{section_3_2}, 
(O8) may not hold in general even if $u$ is a meromorphic inner function. 

%%%%%%%%%%%%%%%%%%%%%%%%%%%%%%%%%%%%%%%%%%%%%%%%%%%%%%%%%%%%%%%%%%%%%%%%%%%%%%%%%%%%%%%%
%
\subsection{Proof of Theorem \ref{thm_2_5}}
%
%%%%%%%%%%%%%%%%%%%%%%%%%%%%%%%%%%%%%%%%%%%%%%%%%%%%%%%%%%%%%%%%%%%%%%%%%%%%%%%%%%%%%%%%

Note that $\tilde{A}(0,z)$ and $\tilde{B}(0,z)$ are defined, 
since $\mathsf{K}[0]=0$ by the proof of Proposition \ref{prop_6_2}. 
We have $
(1-\mathsf{P}_0)\Phi  + \mathsf{P}_0\Phi 
+ \mathsf{K}\mathsf{P}_0 \Phi = (1-\mathsf{P}_0)1 + \mathsf{P}_0 1
$ by \eqref{c_203}. 
Acting $\mathsf{P}_0$ to both sides gives $\mathsf{P}_0 \Phi=\mathsf{P}_0 1$, 
since  $\mathsf{K}\mathsf{P}_0\Phi$ has support in $[0,\infty)$. 
Thus 
$(1-\mathsf{P}_0)\Phi  = (1-\mathsf{P}_0)1- \mathsf{K}\mathsf{P}_01$. 
We calculate $\mathsf{K}\mathsf{P}_01$. For $g \in S(\R)$, 
if we write $G(z)=(\mathsf{F}g)(z)$, 
\[
\aligned 
\langle \mathsf{K}\mathsf{P}_0 1, g \rangle
& =  \langle \mathsf{K}g, \mathsf{P}_0 1 \rangle
 = \int \left(  \frac{1}{2\pi}\int_{\Im z=0} u(z)G^\sharp(z) e^{-izx} dz \right) \mathsf{P}_0 1(x) \, dx \\
& = \int \left(  \frac{1}{2\pi}\int_{\Im z=\delta>0} 
\theta(z)G^\sharp(z) e^{-izx} dz \right) \mathsf{P}_0 1(x) \, dx, 
\endaligned 
\]
since $u$ is bounded in $\C_+ \cup \R$. 
Then, we have 
\[
\aligned 
\mathsf{K}\mathsf{P}_0 1(x) 
= u(0)(1- \mathsf{P}_0)1(x)
+ 
\frac{1}{2\pi}\int_{\Im(z)=\delta>0} \frac{\theta(z)-u(0)}{-iz} \, e^{-izx} dz
\endaligned 
\]
in a way similar to the proof of Proposition \ref{prop_4_1}. 
Because the second term of the right-hand side belongs to $L^2(\R)$, 
$(\mathsf{F}(1-\mathsf{P}_0)\mathsf{K}\mathsf{P}_0 1)(z)$ is defined for $z\in \C_+$. 
Hence $(\mathsf{F}(1-\mathsf{P}_0)\Phi)(z)$ is defined for $z\in \C_+$. Therefore, 
\[
\aligned 
\frac{2}{-iz}(-i\tilde{B}(0,z))
= 
\mathsf{F}(1-\mathsf{P}_0)\Phi  
& = \mathsf{F} (1-\mathsf{P}_0)1 - \mathsf{F}\mathsf{K}\mathsf{P}_01 
 = \frac{1}{-iz}(1 - \theta(z)). 
\endaligned 
\]
The case of $\tilde{A}(0,z)$ is shown by a similar argument. 
\hfill $\Box$

%%%%%%%%%%%%%%%%%%%%%%%%%%%%%%%%%%%%%%%%%%%%%%%%%%%%%%%%%%%%%%%%%%%%%%%%%%%%
%
\section{Complementary results} \label{section_7}
%
%%%%%%%%%%%%%%%%%%%%%%%%%%%%%%%%%%%%%%%%%%%%%%%%%%%%%%%%%%%%%%%%%%%%%%%%%%%%

%%%%%%%%%%%%%%%%%%%%%%%%%%%%%%%%%%%%%%%%%%%%%%%%%%%%%%%%%%%%%%%%%%%%%%%%%%%%
%
\subsection{A sufficient condition for (O6)}
%
%%%%%%%%%%%%%%%%%%%%%%%%%%%%%%%%%%%%%%%%%%%%%%%%%%%%%%%%%%%%%%%%%%%%%%%%%%%%

\begin{proposition} \label{prop_7_1}
Suppose that $u$ is an inner function $\theta$ in $\C_+$ and continuous on $\R$. 
Then $\mathsf{K}[t]$ is compact for all $t \in \R$. 
\end{proposition}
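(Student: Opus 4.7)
For $t\le 0$ the claim is immediate from Proposition \ref{prop_6_2}, which gives $\mathsf{K}[t]=0$. The substance of the proposition therefore concerns $t>0$, where I plan to recast $\mathsf{K}[t]$ as an antilinear analogue of a Hankel operator and then invoke a Hartman-type compactness criterion.

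First, I would conjugate by the translation $(T_tf)(x):=f(x-t)$. Using $\mathsf{F}T_t=\mathsf{M}_{e^{itz}}\mathsf{F}$ together with the commutation $\mathsf{J}^\sharp\mathsf{M}_{e^{itz}}=\mathsf{M}_{e^{-itz}}\mathsf{J}^\sharp$ (both visible from the definitions in Section \ref{section_2_1}), a short calculation gives
\[
T_t^{-1}\mathsf{K}[t]\,T_t=\mathsf{P}_0\,\mathsf{K}_{\psi_t}\,\mathsf{P}_0,\qquad \psi_t(z):=e^{-2itz}\theta(z),
\]
so that compactness of $\mathsf{K}[t]$ on $L^2(-\infty,t)$ is equivalent to compactness of the compression $\mathsf{P}_0\mathsf{K}_{\psi_t}\mathsf{P}_0$ on $L^2(-\infty,0)$.

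I would then pass to the Fourier side, identifying $L^2(-\infty,0)$ with $H^2(\C_-)$, under which the above compression becomes the antilinear operator $F\mapsto P_-(\psi_tF^\sharp)$ on $H^2(\C_-)$, where $P_-$ is the Riesz projection. Composing with the antilinear isometry $F\leftrightarrow F^\sharp$ between $H^2(\C_-)$ and $H^2(\C_+)$ turns this into the linear Hankel operator
\[
H_{\psi_t}\colon H^2(\C_+)\longrightarrow H^2(\C_-),\qquad G\longmapsto P_-(\psi_t G),
\]
and the compactness of $\mathsf{K}[t]$ becomes the compactness of $H_{\psi_t}$. Invoking the half-plane version of Hartman's theorem, this reduces to verifying that $\psi_t\in H^\infty(\C_+)+C(\dot{\R})$, where $\dot{\R}=\R\cup\{\infty\}$ is the one-point compactification.

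The main obstacle is exactly this verification. The factor $e^{-2itz}$ is not itself in $H^\infty(\C_+)$ for $t>0$ and its boundary values $e^{-2itx}$ oscillate at infinity, so a naive decomposition fails. I expect that the hypothesis ``continuous on $\R$'' has to be read in the strong sense $\theta\in C(\dot{\R})$ (equivalently, $\theta$ admits a unimodular limit at infinity), after which $\theta$ can be uniformly approximated on $\dot{\R}$ by rational inner functions; for the approximants the resulting Hankel operator $H_{\psi_t}$ has finite rank, and one then passes to the norm limit. The uniform approximation on the compactified real line is where the continuity hypothesis really carries its weight, and this is the step that I expect to demand the most care.
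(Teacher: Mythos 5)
Your reductions are correct up to, and including, the identification with a Hankel operator: indeed $T_t^{-1}\mathsf{K}[t]T_t=\mathsf{P}_0\mathsf{K}_{\psi_t}\mathsf{P}_0$ with $\psi_t(z)=e^{-2itz}\theta(z)$, and on the Fourier side, after composing with the antilinear isometry $F\mapsto F^\sharp$, this is exactly $H_{\psi_t}\colon H^2(\C_+)\to H^2(\C_-)$, $G\mapsto P_-(\psi_tG)$. This is a legitimate cousin of the paper's route, which instead uses ${\rm supp}\,k\subset[0,\infty)$ (Proposition \ref{prop_6_1}) to localize $\mathsf{K}[t]$ to $L^2(-t,t)$ and identifies it with a \emph{truncated} Toeplitz operator on the Paley--Wiener space $\mathsf{F}L^2(0,2t)$, to which a compactness criterion for continuous symbols from \cite{MR3589670} is applied.

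The gap is exactly where you placed it, and the proposed repair cannot work. First, an inner function lying in $C(\dot{\R})$ is, via the Cayley transform, an inner function on the disc continuous up to the boundary, hence a finite Blaschke product; reading the hypothesis that way would exclude every intended application ($e^{2iaz}$, the $\Gamma$-quotient, the Selberg-class inner functions of Section \ref{section_3}, none of which has a limit at $\infty$). Second, even for such $\theta$ your approximants do not give finite-rank Hankel operators: already for $\theta\equiv1$ one has $\psi_t=e^{-2itz}$, and on the Fourier side $H_{e^{-2itz}}$ is the restriction map $g\mapsto g|_{(0,2t)}$, an infinite-rank partial isometry; equivalently, $e^{-2itz}\notin H^\infty(\C_+)+C(\dot{\R})$ for $t>0$, so by Kronecker's theorem $P_-\psi_t$ is not rational and no norm-limit argument starting from finite-rank pieces is available. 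In fact, since your chain of identifications is an exact unitary equivalence, Hartman's theorem turns your reduction into the characterization ``$\mathsf{K}[t]$ compact $\iff e^{-2itz}\theta\in H^\infty(\C_+)+C(\dot{\R})$,'' and this membership genuinely fails for instance for $\theta=e^{2iaz}$ and $t>a$, where $\mathsf{K}[t]$ is the isometric involution $f\mapsto\overline{f(2a-\,\cdot\,)}$ of the infinite-dimensional space $L^2(2a-t,t)$ (the Paley--Wiener example of Section \ref{section_3}). So the step you flagged as delicate is not merely hard: it is false under the stated hypotheses, and completing your argument would refute rather than prove the compactness claim for such $\theta$ and $t$. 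Any correct argument has to exploit more than continuity of $\theta$ on $\R$ alone.
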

\begin{remark}
Even if $u$ is not an inner function, $\mathsf{K}[t]$ can be a compact operator. 
For example, $u(z)=\Gamma(1/2+iz)/\Gamma(1/2-iz)$ is not an inner function, 
but $\mathsf{K}[t]$ is compact for every $t\in \R$ 
because we can check that the kernel $k(x)=e^{x/2}J_0(2e^{x/2})$ 
satisfies the Hilbert--Schmidt condition on $(-\infty,t] \times (-\infty,t]$. 
(Also, it is proved that $\mathsf{K}[t]$ is a limit of finite rank operators in \cite[Section 5]{Burnol2011}).   
\end{remark}
\begin{proof}
Only the case of positive $t$ needs to be proved by Proposition \ref{prop_6_2} (1). 
We find that $\mathsf{K}[t](L^2(-\infty,-t))=\{0\}$  
and $\mathsf{K}[t](L^2(-t,t)) \subset L^2(-t,t)$ by Proposition \ref{prop_6_1}. 
Therefore, it suffices to prove that the restriction $\left.\mathsf{K}[t]\right|_{L^2(-t,t)}$ is compact. 
For $f \in L^2(-t,t)$, 
\[
(\mathsf{K}[t]f)(x)
= \mathbf{1}_{[0,2t]}(x-t)\int_{0}^{2t}k((x-t)-y+2t)\overline{f(-y+t)}dy.
\]
Therefore, the restriction of $\mathsf{K}[t]$ to $L^2(-t,t)$ 
is a composition of the translation $f(x) \mapsto f(x-t)$, 
inversion $f(x) \mapsto f(-x)$, conjugation $f(x) \mapsto \overline{f(x)}$ 
and the operator $\mathsf{F}^{-1}A_\phi\mathsf{F}$ 
defined by $A_\phi F := \mathsf{Q}_{2t}\mathsf{M}_\phi F$, where 
\[
\phi(z) = \int_{0}^{\infty} k(x+2t)e^{izx}dx=e^{-2itz}\theta(z)-\int_{-2t}^{0}k(x+2t)e^{izx}dx
\]
and $\mathsf{Q}_{2t}$ is the projection from $L^2(0,\infty)$ 
to the Paley--Wiener space $\mathsf{F}L^2(0,2t)$. 
That is, $A_\phi$ is the truncated Toeplitz operator on $\mathsf{F}L^2(0,2t)$. 
Note that $\int_{-2t}^{0}k(x+2t)e^{izx}dx$ is entire, since $k$ is a tempered distribution, 
which is a higher derivative of a continuous function.  
Then $A_\phi$ is compact if $\theta$ is continuous on $\R$ 
by \cite[Theorem 5.1]{MR3589670} (see also \cite[Remark 3.5]{MR2679022}). 
Hence the restriction $\left.\mathsf{K}[t]\right|_{L^2(-t,t)}$ is compact. 
\end{proof}

\begin{proposition} \label{prop_7_3}
Suppose that $u$ is an inner function $\theta$ in $\C_+$ and is continuous on $\R$,  
and there are no entire functions $F$ and $G$ 
of exponential type such that $\theta =G/F$. 
Then $\Vert \mathsf{K}[t] \Vert_{\rm op} <1$ for all $t \in \R$. 
In particular, $t_0=\infty$ and (O6) is satisfied. 
\end{proposition}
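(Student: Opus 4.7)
The plan is to prove the operator-norm bound by contradiction, via a Paley--Wiener analysis combined with the meromorphic continuation of $\theta$, and to infer $t_0 = \infty$ by applying the same machinery to the characterisation of $\mathcal{V}_t(u)$. By Proposition \ref{prop_6_2}(1) only $t > 0$ is nontrivial, so suppose $\Vert \mathsf{K}[t]\Vert_{\rm op} = 1$ for some such $t$. Proposition \ref{prop_7_1} makes $\mathsf{K}[t]$ compact (it is antilinear, of norm at most $1$), so its norm is attained by some unit vector $f \in L^2(-\infty,t)$. Since $\mathsf{K}$ is isometric on $L^2(\R)$ and $\mathsf{K}[t] = \mathsf{P}_t\mathsf{K}\mathsf{P}_t$, the chain
\[
1 = \Vert \mathsf{P}_t\mathsf{K} f \Vert \leq \Vert \mathsf{K} f \Vert = \Vert f \Vert = 1
\]
forces $\mathsf{K}f \in L^2(-\infty,t)$ as well; both $f$ and $\mathsf{K}f$ are then supported in $(-\infty,t)$.

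Next I would translate the two support conditions into Hardy form. Setting $V := (\mathsf{F}f)^\sharp$ and $U := \mathsf{F}\mathsf{K}f$, the defining identity $U = \theta V$ on $\R$ combined with the factorisations $V(z) = e^{-itz}W_+(z)$, $U(z) = e^{itz}W_-(z)$ (with $W_+\in H^2(\C_+)$, $W_- \in H^2(\C_-)$) yields
\[
\theta(z)\,W_+(z) = e^{2itz}\,W_-(z) \quad \text{a.e.\ on }\R.
\]
Proposition \ref{prop_6_1} gives $\operatorname{supp}\mathsf{F}^{-1}\theta \subset [0,\infty)$, so $\mathsf{F}^{-1}(\theta W_+)$ lies in $L^2[0,\infty)$; on the other hand $\mathsf{F}^{-1}(e^{2itz}W_-)$ is the $2t$-left-translate of an $L^2(-\infty,0)$ function, hence lies in $L^2(-\infty,2t]$. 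Equality forces $P(z) := \theta(z)W_+(z) = e^{2itz}W_-(z)$ to be the Fourier transform of an $L^2$-function supported in $[0,2t]$, so $P$ is entire of exponential type $\leq 2t$. Using $|\theta|=1$ on $\R$ and the meromorphic continuation $\theta(z) = 1/\theta^\sharp(z)$ into $\C_-$, the candidate $P(z)\theta^\sharp(z)$ is holomorphic in $\C_-$ with boundary values on $\R$ matching those of $P/\theta$ from $\C_+$; gluing by $L^2$ boundary matching extends $W_+$ to an entire function, and the bounds $|P(z)| \ll e^{2t(\Im z)_+}$, $|\theta|\leq 1$ in $\C_+$, $|\theta^\sharp|\leq 1$ in $\C_-$ confirm exponential type. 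Since $f \neq 0$, $W_+ \not\equiv 0$, so $\theta = P/W_+$ is a forbidden factorisation and the contradiction yields $\Vert\mathsf{K}[t]\Vert_{\rm op} < 1$ for every $t$, whence (K6).

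To obtain $t_0 = \infty$, I would repeat the Paley--Wiener analysis on nonzero elements of $\mathcal{V}_t(u)$, which correspond to pairs $X\in H^2(\C_-)$, $Y\in H^2(\C_+)$ satisfying $\theta X = e^{2itz}Y$. If $t_0 < \infty$, then Lemma \ref{lem_5_5} (applicable because $\Vert\mathsf{K}[t]\Vert_{\rm op}<1$ is now known for all $t$) identifies $\mathcal{V}_t(u)^\perp = L^2(-\infty,t) + \mathsf{K}L^2(-\infty,t)$ for $t > t_0$, and the strict-contraction bound makes $(1-\mathsf{P}_t)\mathsf{K}\colon L^2(-\infty,t)\to L^2(t,\infty)$ an antilinear bijection. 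Its Fourier translation is a Wiener--Hopf-type solvability statement which, through the same entire-extension trick applied to the dual equation, again produces an entire factorisation $\theta = G/F$ of exponential type, contradicting the hypothesis. The principal obstacle I anticipate is the gluing step for $W_+$: one must check that the zeros of $\theta$ in $\C_+$ are absorbed into zeros of $P$ so that $P/\theta$ is actually holomorphic there, and that $L^2$-boundary matching of the two sides really produces a single entire function of controlled exponential type rather than only two holomorphic pieces with compatible traces on $\R$.
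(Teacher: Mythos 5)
Your argument is correct in substance and structurally parallel to the paper's, but it fills in a step the paper only cites, so a comparison is worthwhile. Both proofs reduce, via the compactness of $\mathsf{K}[t]$ from Proposition \ref{prop_7_1}, to the nonexistence of a nonzero $f \in L^2(-\infty,t)$ with $\mathsf{K}f$ also supported in $(-\infty,t)$. The paper gets there by noting that $\Vert\mathsf{K}[t]\Vert_{\rm op}=1$ would force $\pm 1$ to be an eigenvalue of the linear compact self-adjoint operator $\mathsf{K}[t]^2$, hence (by the antilinear phase rotation $\mathsf{K}[t](e^{i\alpha}f)=e^{-2i\alpha}\lambda\, e^{i\alpha}f$) force $1$ to be an eigenvalue of $\mathsf{K}[t]$; your norm-attainment argument reaches the same extremal vector more directly and is, if anything, cleaner. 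For the second half the paper simply invokes \cite[Theorem 5.2]{Su19_1}, whereas you reprove it: your Paley--Wiener computation ($P=\theta W_+=e^{2itz}W_-$ is entire of exponential type $\leq 2t$, and $W_+$ continues across $\R$ by gluing with $P\theta^\sharp$) is exactly the intended mechanism. Of the two obstacles you flag, the first is vacuous: $W_+=e^{itz}(\mathsf{F}f)^\sharp$ lies in $H^2(\C_+)$ by construction, so no division by $\theta$ inside $\C_+$ is ever performed and the zeros of $\theta$ cause no trouble. The second is the standard Morera/Privalov-type continuation for two Smirnov-class functions whose nontangential boundary values agree a.e.\ on $\R$; it applies here because $P\theta^\sharp$ is bounded on every strip $-c<\Im z<0$ and $W_+\in H^2(\C_+)$, so this is a genuine but routine step, on a par with the paper's reliance on the cited theorem. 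The only place you are materially thinner than you should be is the claim $t_0=\infty$: the ``Wiener--Hopf solvability'' sketch would have to be carried out, most naturally by writing $\mathsf{F}(\mathcal{V}_t(u))=e^{itz}H^2(\C_+)\cap\theta e^{-itz}H^2(\C_-)$ and running the same entire-continuation trick to show this intersection cannot vanish under the hypothesis (this is the content of \cite[Lemma 4.1]{Su19_1}); note, though, that the paper's own proof also leaves this final assertion implicit.
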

\begin{proof} 
Only the case of positive $t$ needs to be proved by Proposition \ref{prop_6_2} (1). 
Let $t>0$. 
By applying the argument in the proof of \cite[Theorem 5.2]{Su19_1}, 
it is shown that $1$ is not an eigenvalue of $\mathsf{K}[t]$, 
since differences in the definition and properties of $\mathsf{K}$ 
do not affect the argument.
Therefore, according to the general theory of antilinear operators 
(\cite{MR2749452, Uhlmann}), 
if $|\lambda|=1$, then $\lambda$ is not an eigenvalue of $\mathsf{K}[t]$. 
Since $\Vert \mathsf{K}[t]\Vert_{\rm op} \leq 1$, 
$\Vert \mathsf{K}[t] \Vert_{\rm op} \not=1$ implies 
$\Vert \mathsf{K}[t] \Vert_{\rm op} <1$. 
Suppose that $\Vert \mathsf{K}[t] \Vert_{\rm op}=1$. 
Then $\Vert \mathsf{K}[t]^2 \Vert_{\rm op}
=\Vert \mathsf{K}[t] \Vert_{\rm op}^2
=1$, since $\mathsf{K}[t]$ is self-adjoint. 
Since $\mathsf{K}[t]$ is compact, $\mathsf{K}[t]^2$ is a linear compact operator. 
Therefore, $1$ or $-1$  is an eigenvalue of  $\mathsf{K}[t]^2$. 
Hence every complex number of absolute value one is an eigenvalue of $\mathsf{K}[t]$. 
This is a contradiction.   
\end{proof}

%%%%%%%%%%%%%%%%%%%%%%%%%%%%%%%%%%%%%%%%%%%%%%%%%%%%%%%%%%%%%%%%%%%%%%%%%%%%%%%%%%%%%%%%
%
\subsection{A sufficient condition for (O7)}
%
%%%%%%%%%%%%%%%%%%%%%%%%%%%%%%%%%%%%%%%%%%%%%%%%%%%%%%%%%%%%%%%%%%%%%%%%%%%%%%%%%%%%%%%%

We often easily find that the values of $\Phi(t,t)$ and $\Psi(t,t)$ 
for large negative $t$ 
as in the case of $u$ is an inner function or $u=\Gamma(\frac{1}{2}+iz)/\Gamma(\frac{1}{2}-iz)$. 
In such cases, the smoothness of $\Phi(t,x)$ and $\Psi(t,x)$ around the diagonal $x=t$ 
lead to the positive definiteness of $H(t)$ defined by \eqref{c_208} and \eqref{c_209}. 
In stating the following proposition, 
we refer to \cite[Definition 6.9]{MR0216289} for the values of distributions.

\begin{proposition} \label{prop_7_4}
Let $u \in U_{\rm loc}^1(\R)$. Suppose that (O1), (O2), (O3), (O4) are satisfied. 
Further, we suppose that there is an interval $I$ such that 
\begin{enumerate}
\item[(0)] $\Vert \mathsf{K}[t] \Vert_{\rm op}<1$ for $t \in I$; 
\item[(1)]  the derivatives $\frac{d}{dt} \Phi(t,t)$ and $\frac{d}{dt} \Psi(t,t)$ 
are defined as a distribution on $I$;  
\item[(2)]  the distributions $\frac{\partial}{\partial t} \Phi(t,x)$, 
$\frac{\partial}{\partial x} \Phi(t,x)$, 
$\frac{\partial}{\partial t} \Psi(t,x)$, 
$\frac{\partial}{\partial x} \Psi(t,x)$ for $x$ have values at $x=t$ 
for almost all $t \in I$; 
\item[(3)] all $t \mapsto \frac{\partial}{\partial t} \Phi(t,t)$, 
$t \mapsto \frac{\partial}{\partial x} \Phi(t,t)$, 
$t \mapsto \frac{\partial}{\partial t} \Psi(t,t)$, 
$t \mapsto \frac{\partial}{\partial x} \Psi(t,t)$ define 
distributions on $I$ and satisfy 
\begin{equation} \label{c_701}
\frac{d}{dt}\Phi(t,t)= 
\frac{\partial\Phi}{\partial t}(t,t) + \frac{\partial\Phi}{\partial x}(t,t), \quad 
\frac{d}{dt}\Psi(t,t)= 
\frac{\partial\Psi}{\partial t}(t,t) + \frac{\partial\Psi}{\partial x}(t,t). 
\end{equation}
\end{enumerate}
Then $\Re(\Phi(t,t)\overline{\Psi(t,t)})$ is a constant on $I$.  
\end{proposition}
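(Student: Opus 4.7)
The plan is to show that $\frac{d}{dt}\bigl(\Phi(t,t)\overline{\Psi(t,t)}\bigr)$ is purely imaginary on $I$, from which $\Re\bigl(\Phi(t,t)\overline{\Psi(t,t)}\bigr)$ having zero distributional derivative follows at once. Abbreviate $A:=\Phi(t,t)$, $B:=\Psi(t,t)$, $A':=(\partial_x\Phi)(t,t)$, $B':=(\partial_x\Psi)(t,t)$. Hypotheses (2) and (3) will be used first to evaluate the system \eqref{c_407} of Proposition \ref{prop_4_5} at the diagonal $x=t$, giving $(\partial_t\Phi)(t,t)=-\alpha(t)B'-i\beta(t)A'$ and $(\partial_t\Psi)(t,t)=i\beta(t)B'-\gamma(t)A'$; combined with the chain rule \eqref{c_701}, this yields $\frac{dA}{dt} = -\alpha B' + (1-i\beta)A'$ and $\frac{dB}{dt} = (1+i\beta)B' - \gamma A'$.

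Next I will compute
\[
\frac{d}{dt}(A\bar B) = -\alpha B'\bar B - \gamma A \overline{A'} + (1-i\beta)\bigl(A'\bar B + A\overline{B'}\bigr)
\]
and invoke Lemma \ref{lem_4_2} to eliminate the factors $\alpha,\gamma$. The identities $\Phi=\alpha\Psi+i\beta\Phi$ and $\Psi=-i\beta\Psi+\gamma\Phi$ of that lemma, evaluated at $x=t$, give $\alpha B=A(1-i\beta)$ and $\gamma A=B(1+i\beta)$; conjugating the first yields $\alpha\bar B=\bar A(1+i\beta)$. Substituting these into the first two terms and writing $W:=A'\bar B+A\overline{B'}$ (so $\bar W=\bar A B'+B\overline{A'}$), the expression telescopes to
\[
\frac{d}{dt}(A\bar B) = -(1+i\beta)\bar W + (1-i\beta)W = 2i\bigl(\Im W-\beta\Re W\bigr),
\]
which is purely imaginary, yielding the claim upon taking real parts.

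The main obstacle is technical rather than algebraic: one must justify evaluating the distributional partial derivatives of $\Phi$ and $\Psi$ at the single point $x=t$, and combining them via the chain rule in the distribution sense on $I$. Hypotheses (1)--(3) of the proposition are crafted exactly to license these manipulations; once they are in force, no additional input beyond the system \eqref{c_407} and the algebraic identities of Lemma \ref{lem_4_2} is required.
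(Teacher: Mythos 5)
Your proof is correct, and it is in substance equivalent to the paper's, though packaged differently. The paper adds \eqref{c_503} to \eqref{c_506} and \eqref{c_504} to \eqref{c_505} to obtain two expressions for $\phi^++\phi^-$, whose equality gives
$\overline{\Psi(t,t)}\,(\partial_t+\partial_x)\Phi+\overline{\Phi(t,t)}\,(\partial_t+\partial_x)\Psi=0$;
evaluating on the diagonal and using \eqref{c_701} then yields the (slightly stronger, fully complex) identity $\overline{\Psi(t,t)}\,\tfrac{d}{dt}\Phi(t,t)+\overline{\Phi(t,t)}\,\tfrac{d}{dt}\Psi(t,t)=0$, whose real part is exactly your statement that $\tfrac{d}{dt}\bigl(\Phi(t,t)\overline{\Psi(t,t)}\bigr)$ is purely imaginary. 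You instead start from the system \eqref{c_407} restricted to $x=t$ and eliminate $\alpha,\gamma$ via the algebraic identities of Lemma \ref{lem_4_2}; since the paper notes after Proposition \ref{prop_5_1} that comparing \eqref{c_503}--\eqref{c_506} is precisely how \eqref{c_407} is recovered, the two routes rest on the same ingredients, and your algebra (which I checked: the two eliminated terms combine to $-(1+i\beta)\overline{W}$ and the remainder to $(1-i\beta)W$) is sound. The only caveats, shared equally with the paper's own argument, are that evaluating a distributional identity at the single point $x=t$ is licensed by hypotheses (2)--(3), and that both arguments implicitly use $\Re\bigl(\Phi(t,t)\overline{\Psi(t,t)}\bigr)\neq 0$ so that $H(t)$ (respectively the coefficients in \eqref{c_503}--\eqref{c_506}) is defined.
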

\begin{proof}
Adding \eqref{c_503} and \eqref{c_506}, 
\[
\aligned 
\phi^++\phi^-
& =\frac{\Re(\Phi(t,t))}{\Re(\overline{\Psi(t,t)}\Phi(t,t))}
\left( \frac{\partial}{\partial t}\Psi(t,x) 
+ \frac{\partial}{\partial x}\Psi(t,x)
\right) \\
& \quad - i \, 
\frac{\Im(\Psi(t,t))}{\Re(\overline{\Psi(t,t)}\Phi(t,t))}
\left( 
\frac{\partial}{\partial t}\Phi(t,x) + \frac{\partial}{\partial x}\Phi(t,x) 
\right). 
\endaligned 
\]
On the other, adding \eqref{c_504} and \eqref{c_505},  
\[
\aligned 
\phi^++\phi^-
& = -\frac{\Re(\Psi(t,t))}{\Re(\overline{\Psi(t,t)}\Phi(t,t))}
\left( \frac{\partial}{\partial t}\Phi(t,x) 
+ \frac{\partial}{\partial x}\Phi(t,x)
\right) \\
& \quad + i \, 
\frac{\Im(\Phi(t,t))}{\Re(\overline{\Psi(t,t)}\Phi(t,t))}
\left( 
\frac{\partial}{\partial t}\Psi(t,x) + \frac{\partial}{\partial x}\Psi(t,x) 
\right). 
\endaligned 
\]
Therefore, 
\[
\aligned 
\frac{\overline{\Psi(t,t)}}{\Re(\overline{\Psi(t,t)}\Phi(t,t))}
& \left( 
\frac{\partial}{\partial t} + \frac{\partial}{\partial x}
\right)\Phi(t,x) \\
& +
\frac{\overline{\Phi(t,t)}}{\Re(\overline{\Psi(t,t)}\Phi(t,t))}
\left( \frac{\partial}{\partial t}
+ \frac{\partial}{\partial x}
\right) \Psi(t,x)
=0. 
\endaligned 
\]
Using this and \eqref{c_701},  we have 
\[
\aligned 
\frac{d}{dt} & \log \Re(\Phi(t,t)\overline{\Psi(t,t)}) \\
&=
\frac{\overline{\Phi(t,t)}}{\Re(\Phi(t,t)\overline{\Psi(t,t)})}\frac{d}{dt}\Psi(t,t)
+
 \frac{\overline{\Psi(t,t)}}{\Re(\Phi(t,t)\overline{\Psi(t,t)})}\frac{d}{dt}\Phi(t,t)
 \\
& \quad 
 +  
\overline{
\left(
\frac{\overline{\Phi(t,t)}}{\Re(\overline{\Psi(t,t)}\Phi(t,t))}\frac{d}{dt}\Psi(t,t)
+
\frac{\overline{\Psi(t,t)}}{\Re(\Phi(t,t)\overline{\Psi(t,t)})}\frac{d}{dt}\Phi(t,t)
\right)
} \\ 
& =0.
\endaligned 
\]
Hence $\Re(\Phi(t,t)\overline{\Psi(t,t)})$ is a constant. 
\end{proof}

%
%---------------------------------------

%---------------------------------------

\bigskip \noindent
%Masatoshi Suzuki,\\[5pt]
\\
Department of Mathematics, 
School of Science, \\
Tokyo Institute of Technology \\
2-12-1 Ookayama, Meguro-ku, 
Tokyo 152-8551, JAPAN  \\
Email: {\tt msuzuki@math.titech.ac.jp}

\end{document}